\newtheorem{thm}{Theorem}[section]
\newtheorem{claim}{Claim}
\newtheorem{lem}[thm]{Lemma}
\newtheorem*{prob*}{Problem}
\newtheorem*{thm*}{Theorem}
\theoremstyle{definition}
\newtheorem*{defn*}{Definition}
\newtheorem{rem}{Remark}[section]
\newtheorem*{globalestA}{Global estimates A}
\newtheorem*{globalestB}{Global estimates B}
\newtheorem{que}{Question}
\newtheorem*{rem*}{Remark}
\numberwithin{equation}{section}
\newcommand{\K}{\widetilde{K}}
\newcommand{\natN}{\mathbb{N}}
\newcommand{\realR}{\mathbb{R}}
\newcommand{\compC}{\mathbb{C}}
\newcommand{\intZ}{\mathbb{Z}}
\newcommand{\bigO}{\mathcal{O}}
\renewcommand{\Re}{\operatorname{Re}}
\renewcommand{\Im}{\operatorname{Im}}
\DeclareMathOperator{\PV}{P.V.}
\DeclareMathOperator{\loc}{local}
\DeclareMathOperator{\glob}{global}
\DeclareMathOperator{\out}{out}
\DeclareMathOperator{\inner}{in}
\DeclareMathOperator{\Airy}{Ai}
\DeclareMathOperator{\crit}{crit}
\DeclareMathOperator{\bulk}{bulk}
\newcommand{\Prob}{\mathbb{P}}
\newcommand{\re}{\mathop{\mathrm{Re}}}
\newcommand{\im}{\mathop{\mathrm{Im}}}
\newcommand{\eins}{\leavevmode\hbox{\small1\kern-3.8pt\normalsize1}}
\title[Lyapunov exponent, universality and phase Transition]{\bf{Lyapunov exponent,  universality and phase transition     for  products of random matrices}}
\begin{document}
\author{Dang-Zheng Liu}
\address{CAS Key Laboratory of Wu Wen-Tsun Mathematics, School of Mathematical Sciences, University of Science and Technology of China, Hefei 230026, P.R.~China}
 \email{dzliu@ustc.edu.cn}

\author{Dong Wang}
\address{School of Mathematical Sciences, University of Chinese Academy of Sciences, Beijing 100047, P.R.~China}  \email{wangdong@wangd-math.xyz}
\author{Yanhui Wang}
\address{School of Mathematics and Statistics, Henan University, Kaifeng, 475001, P.R.~China}
\email{yhwang@henu.edu.cn}

\keywords{Lyapunov exponent, Products of random matrices, GUE statistics, Phase transition}

\commby{}
%%%%%%%%%%%%%%%%%%%%%%%%%%%%%%%%%%%%%%%%%%%%%%%%%%%%%%%%%%
%%%%%%%%%%%%%%%%%%%%%%%%%%%%%%%%%%%%%%%%%%%%%%%%%%%%%%%%%%
\begin{abstract}
  Products of $M$ i.i.d.~random matrices of size $N \times N$ are related to classical limit theorems in probability theory ($N=1$ and large $M$), to Lyapunov exponents in dynamical systems (finite $N$ and large $M$), and to  universality in random matrix theory (finite $M$ and large $N$). Under the two different limits of $M\to \infty$ and $N\to \infty$,  the local singular value statistics    display  Gaussian and   random matrix theory  universality, respectively.
  However, it is unclear what happens if both $M$ and $N$ go to infinity. This problem, proposed by Akemann, Burda, Kieburg \cite{Akemann-Burda-Kieburg14} and Deift \cite{Deift17}, lies at the heart of understanding  both kinds of universal limits. In the case of complex Gaussian random matrices, we prove  that there exists a crossover phenomenon as the relative ratio of $M$ and $N$ changes from $0$ to $\infty$: sine and Airy kernels from the Gaussian Unitary Ensemble  (GUE) when $M/N \to 0$, Gaussian fluctuation when $M/N \to \infty$, and new critical phenomena when $M/N \to \gamma \in (0,\infty)$.
 Accordingly, we  further  prove that  the largest singular value   undergoes a phase transition between the Gaussian  and GUE Tracy-Widom distributions.
\end{abstract}
\date{\today}

\maketitle

\tableofcontents
%%%%%%%%%%%%%%%%%%%%%%%%%%%%%%%%%%%%%%%%%%%%%%%%%%%%%%%%%%

\section{Introduction and main results}

\subsection{Lyapunov exponents}

In his famous 1892 monograph \cite{Lyapunov92}, Alexandr Lyapunov introduced the concept of Lyapunov exponent, which originated from the problem of the stability of solutions of differential equations. For a linearized differential equation
\begin{equation} \label{linearEq}
  \dot{v} (t) =X_t v, \quad v(0)=v_0\in  \mathbb{R}^N,
\end{equation}
where $X_{(\cdot)}$ is a continuous and bounded function from $\mathbb{R}_{+}$ to the space of $N\times N$ real matrices, the (largest) Lyapunov exponent of a solution $v(t;v_0)$ of \eqref{linearEq} is defined in the following manner
\begin{equation} \label{LE1}
  \lambda(v_0):=\limsup_{t\rightarrow \infty} \frac{1}{t} \log\|v(t)\|.
\end{equation}
Moreover, Lyapunov proved that $\lambda(v_0)$ is finite for every solution  with $v_0 \neq 0$. Later, through the works of Furstenberg, Kesten, Oseledets, Kingman, Ruelle, Margulis, Avila and other mathematicians, Lyapunov exponents have recently emerged as an important concept in various fields of mathematics and physics, such as linear stochastic systems and stability theory, products of random matrices and random maps, spectral theory of random Schr\"{o}dinger operators, and smooth dynamics;  see e.g.~\cite{Arnold-Wihstutz86,Viana14,Wilkinson17}. % and translate surfaces

In this paper, we are interested in the discrete version of Lyapunov exponents. A discrete-time evolution of an $N$-dimensional real or complex stochastic system which is described by linear difference equations
\begin{equation}
  v(t+1) =X_{t+1} v(t), \quad t=0, 1, 2, \ldots, \label{linearEq2}
\end{equation}
then the total evolution is effectively driven by the product of random matrices at time $t=M$
\begin{equation} \label{Mproduct}
  \Pi_{M}^{}=X_{M}\cdots X_{2}X_{1}.
\end{equation}
The study on products of random matrices can be dated at least from the seminal articles by Bellman \cite{Bellman54} in 1954 and further by Furstenberg and Kesten \cite{Furstenberg-Kesten60} in 1960, in which classical limit theorems in probability theory were obtained under certain assumptions when $M$ goes to infinity. In particular, if $X_{1}, X_2, \dotsc, X_{M}$ are i.i.d.~$N\times N$ random matrices,  each of which has independent (and identically distributed) entries with mean zero and variance one, then the theorem of Furstenberg and Kesten \cite[Theorem 2]{Furstenberg-Kesten60} shows that for any fixed $N$ the largest Lyapunov exponent,  defined as
\begin{equation} \label{LE2}
  \lambda_{\mathrm{max}}:=\lim_{M\rightarrow \infty} \frac{1}{M} \log\|\Pi_{M}\|,
\end{equation}
exists with probability $1$.  Furthermore, all Lyapunov exponents $\lambda_{k}:=\lim_{M\rightarrow \infty} \lambda_{k,M}$ (Lyapunov spectrum),   with
\begin{equation} \label{LE3}
 % \lambda_{k}:=\lim_{M\rightarrow \infty} \lambda_{k}^{(M)},
   \lambda_{k, M}:= \frac{1}{2M}\log\left( k\mathrm{^{th}\,   largest\,  eigenvalue\,   of\, } \Pi_{M}^{*}\Pi_{M}\right), \quad k=1,2, \ldots,N, \end{equation}
exist with probability $1$ by the multiplicative ergodic theorem of Oseledets \cite{Oseledec68,Raghunathan79}. Here it is worth stressing that the $M$-dependent quantities  $ \lambda_{k, M}$ ($k=1,2,\ldots, N$)    are typically  referred to as finite-time Lyapunov exponents, which are equivalent to singular values of $\Pi_{M}$ up to a one-to-one mapping.

However, usually it's very hard to find either explicit formulae or effective algorithms  of accurate approximation for the Lyapunov exponents. This was posed by Kingman \cite{Kingman73} as an outstanding problem in the field. Some noteworthy exceptions occur in the case of $N = 2$, see e.g. \cite{Comtet-Luck-Texier-Tourigny13,Mannion93,Marklof-Tourigny-Wolowski08}. For general $N$, when each $X_j$ is randomly chosen from a finite set of matrices with positive entries, in recent work \cite{Pollicott10} Pollicott solves this problem for the largest Lyapunov exponent. Another special case is when $\{X_j\}$ are independent real/complex \emph{Ginibre} matrices that have i.i.d.~standard real/complex Gaussian entries. This case has high interest in random matrix theory. Then the results of Newman \cite{Newman86} (real case, $\beta=1$) and Forrester \cite{Forrester13, Forrester15} (real and complex cases with $\beta=1, 2$ respectively) show that the Lyapunov spectrum
\begin{equation}
  \lambda_{k} = \frac{1}{2} \left( \log \frac{2}{\beta} +  \psi\big(\frac{\beta}{2}(N-k+1)\big)\right), \quad k=1, \dotsc, N,
\end{equation}
where $\psi(z)=\Gamma'(z)/\Gamma(z)$ denotes the digamma function, see \eqref{eq:digamma_series} below. Forrester \cite{Forrester13, Forrester15} also studied Gaussian random matrices with correlated entries; for more relevant works, see e.g.~\cite{Akemann-Burda-Kieburg14,Ipsen15,Ipsen-Schomerus16,Kargin14,Reddy16} and references therein.

The fundamental  result by Furstenberg and Kesten \cite{Furstenberg-Kesten60} about the asymptotic behavior for products of random matrices has initiated great interest in the topic over the last sixty years, see \cite{Arnold-Wihstutz86,Cohen-Kesten-Newman86} for the early articles. Recently, significant progresses have been achieved in the study of products of random matrices, which have important applications in Schr\"{o}dinger operator theory \cite{Bougerol-Lacroix85}, in statistical physics relating to disordered and chaotic dynamical systems \cite{Crisanti-Paladin-Vulpiani93}, in wireless communication like MIMO (multiple-input and multiple-output) networks \cite{Tulino-Verdu04} and in free probability theory \cite{Mingo-Speicher17}.

\subsection{Universality}

Historically, the pioneering work of Furstenberg and Kesten \cite{Furstenberg-Kesten60} and lots of subsequent works focused on statistical behavior of singular values for the products such as Lyapunov exponents, as the number of factors $M$ tends to infinity. However, the more recent interest in products of random matrices lies in statistical properties of eigenvalues and singular values as the matrix size $N$ goes to infinity, like a single random matrix. The study of one single random  matrix originated from the work of Wigner, Dyson, Mehta and others in 1950-60s, and has become a quite active research field under the name of Random Matrix Theory (RMT), which relates to many important branches of mathematics and physics; see a handbook \cite{Akemann-Baik-Di_Fransesco11}, monographs \cite{Anderson-Guionnet-Zeitouni10, Bai-Silverstein10, Deift99,Deift-Gioev09, Erdos-Yau17, Forrester10, Mehta04,Mingo-Speicher17,Pastur-Shcherbina11,Tao12,Tulino-Verdu04} and references therein.

Local statistical properties of eigenvalues in RMT are usually described by special type of correlation functions which are given by explicit correlation kernels, like sine and Airy kernels. The properties of these correlation functions stem from the repulsion of eigenvalues, which is expected for many randomly disordered systems of the same symmetry class that have delocalized eigenfunctions. This is referred to as universality in RMT, which is different from classical Gaussian universality. Many random matrix ensembles, like Wigner matrices and  invariant ensembles, have been rigorously proved to exhibit universal phenomena, see e.g.~\cite{Deift99,Deift-Gioev09, Erdos-Yau17} and references therein. As to finite products of large random matrices, statistical properties have been extensively studied in \cite{Akemann-Ipsen-Kieburg13,Akemann-Kieburg-Wei13, Alexeev-Gotze-Tikhomirov10,Forrester-Liu15,Ipsen-Kieburg14,Kieburg-Kuijlaars-Stivigny15,Kuijlaars-Stivigny14,Kuijlaars-Zhang14}; see a recent survey \cite{Akemann-Ipsen15} and references therein.

In the present  paper, we consider a concrete product model   that is defined by \eqref{Mproduct} with $X_{1}, \ldots, X_{M}$ being independent complex Ginibre matrices of size $N\times N$.
%, that is, all entries of $X_i$ are i.i.d.~standard complex normal random variables.
The squared singular values $x_{1}, \ldots, x_{N}$ of $\Pi_{M}$ (that is, eigenvalues of $\Pi_{M}^{*}\Pi_{M}$) have  joint probability density function, denoted by  $\mathcal{P}_{N}(x)$,    and  are further proved to form a determinantal point process with correlation kernel
\begin{equation} \label{2integral}
  K_{N}(x, y) = \int_{c-i\infty}^{c + i\infty} \frac{ds}{2\pi i} \oint_{\Sigma} \frac{dt}{2 \pi i} \frac{x^{t} y^{-s-1}}{s-t} \frac{\Gamma(t)}{\Gamma(s)}  \left( \frac{\Gamma(s+N)}{\Gamma(t+N)}\right)^{M+1},
\end{equation}
such that
%the  joint eigenvalue  density can be expressed  by
%\begin{equation} \label{eigenvaluePDF}
% \mathcal{P}_{N}(x_1, \ldots, x_{N})  = \frac{1}{N!}\det[ K_N(x_i, x_j) ]_{i,j=1}^N,
%\end{equation} and
 the
 $n$-point correlation functions (see e.g. \cite{Forrester10,Mehta04}) are specified as
\begin{equation} \label{eq:n-pt_corr_func}
  R^{(n)}_{N}(x_1, \ldots, x_{n}) :=  \frac{N!}{(N-n)!}\idotsint \mathcal{P}_{N}(x) \,dx_{n+1}\cdots dx_N %\lim_{\Delta x \to 0} \frac{\Prob(\text{$\exists$ an eigenvalue of $\Pi_{M}^{*}\Pi_{M}$ in $[x_i, x_i + \Delta x)$})}{(\Delta x)^n} =
   =\det[ K_N(x_i, x_j) ]_{i,j=1}^n,
\end{equation}
where $\Sigma$ is a counter-clockwise  contour encircling $0, -1, \ldots, -N+1$ and $c$ is chosen to make the vertical $s$-contour disjoint from $\Sigma$. See \cite{Akemann-Kieburg-Wei13} for an exact expression  of the  density  $\mathcal{P}_{N}(x)$  and \cite{Kuijlaars-Zhang14} for the  derivation of the kernel $K_{N}(x, y)$. Note that \eqref{2integral} is equivalent to \cite[Proposition 5.1]{Kuijlaars-Zhang14} by shifting the variables by $N$ and conjugating the kernel formula by $(x/y)^N$. We also note that in \cite{Kuijlaars-Zhang14} it is required that the $s$-contour is to the left of $\Sigma$, and this technical requirement can be removed, see \cite[Equation (2.8)]{Liu-Wang-Zhang14}. With the help of this structure, for any fixed $M$ and as $N\rightarrow \infty$, the two first-named authors with Zhang proved the sine and Airy kernels for singular values of $\Pi_M$ in \cite{Liu-Wang-Zhang14}. In an opposite direction, for any fixed $N$ and as $M\rightarrow \infty$, Akemann, Burda and Kieburg proved in \cite{Akemann-Burda-Kieburg14} that $N$ finite-time Lyapunov exponents for $\Pi_M$ are asymptotically independent Gaussian random variables.

So a very natural question arises: What will happen when both the matrix size and the number of factors tend to infinity? Precisely, will the largest Lyapunov exponent undergo a crossover from Gausssian to Tracy-Widom distribution \cite{Tracy-Widom94} at some proper scaling of $M$ and $N$? At the end of \cite[Section 5]{Akemann-Burda-Kieburg14} Akemann, Burda and Kieburg commented
\begin{quote}
  ``Since the two limits commute on the global scale while they do not commute on the local one, we claim that there should be a non-trivial double-scaling limit where new results should show up. In particular we expect a mesoscopic scale of the spectrum which may also show a new kind of universal statistics''.
\end{quote}
Also, in his 2017 list of open problems in random matrix theory and the theory of integrable systems, P.~Deift ended in \cite{Deift17} with
\begin{quote}
 ``There are many other areas, closely related to the problems in the above list, where much progress has been made in recent years, and where much remains to be done. These include:  $\ldots$, singular values of $n$ products of $m\times m$  random matrices as $n, m \rightarrow \infty$, and many others''.
\end{quote}
It is our main goal in the present paper to solve this problem when complex Ginibre matrices are involved.% the random matrices are independent .

\subsection{Main results} \label{mainresults}

For the matrix product \eqref{Mproduct} where independent complex Ginibre matrices are involved, when $M$ and $N$ may go to infinity simultaneously, we will  place emphasis on local statistical properties of its singular values, especially the largest one, while as to the global property it has been argued in \cite{Akemann-Burda-Kieburg18} that the limiting eigenvalue density is a constant up to some proper scaling transform. At the soft edge (also the right edge) of the spectrum we completely characterize the limiting behavior of the largest singular value (equivalently, the largest finite Lyapunov exponent), which plays a key role in dynamical systems and in statistics.  In the bulk of the spectrum, together with Akemann, Burda and Kieburg's interpolating kernel \cite[Equation (13)]{Akemann-Burda-Kieburg18} in the critical regime (see also Theorem  \ref{thm:bulkcrit} in Section \ref{sect:discuss}), we have a pretty good understanding of the bulk behavior. At the hard edge of the spectrum, or equivalently around the smallest singular value, the phase transition of the local statistics is of a different nature. As $N \to \infty$, for every finite and fixed $M$ there exists a known local kernel (Meijer G-kernel) labelled by $M$ (see \cite{Kuijlaars-Zhang14}), so there is no phase transition as $M \to \infty$ in the usual sense. We refer to  \cite{Akemann-Burda-Kieburg18} for more discussion on the hard edge.%%2019Dec07tijiaoban shangmian ref 4(13)yingwei(10)

To state our main results exactly, recalling the product model $\Pi_M=X_M \cdots X_2 X_1$ where $X_1, \ldots, X_M$ are assumed to be i.i.d.~$N\times N$ complex Ginibre matrices, it is easy to see from \eqref {2integral}  that  the  eigenvalues of a random  Hermitian matrix  $\log (\Pi_{M}^{*}\Pi_{M})$  also build a determinantal point process with  correlation kernel
\begin{equation} \label{transformK}
  \K_N(x, y)% = e^y K_N(e^x, e^y)
  = \int_{c-i\infty}^{c + i\infty} \frac{ds}{2\pi i} \oint_{\Sigma} \frac{dt}{2 \pi i} \frac{e^{xt-ys}}{s-t} \frac{\Gamma(t)}{\Gamma(s)}  \left( \frac{\Gamma(s+N)}{\Gamma(t+N)}\right)^{M+1}, \quad x, y \in \realR.
\end{equation}
 Let   $\xi_k$ be the $k$-th largest eigenvalue of  $\log(\Pi^*_M \Pi_M)$, then    the distribution function of   the largest eigenvalue  $\xi_1$ admits  a Fredholm determinant representation   (see e.g. \cite[Lemma 3.2.4]{Anderson-Guionnet-Zeitouni10}, \cite[Chapter 9]{Forrester10})
  \begin{equation} \label{larg}
    \Prob(\xi_1 \leq x) = \det(I - \mathbf{\K}_N) = 1+\sum_{n=1}^{N} \frac{(-1)^n}{n!} \int_{x}^{\infty} \cdots \int_{x}^{\infty}    \det[ \K_N(t_i, t_j) ]_{i,j=1}^n \,dt_1 \cdots dt_n,
  \end{equation}
where $\mathbf{\K}_N$ is the integral operator acting on  $L^2((x, +\infty))$ with the kernel $\K_N$.

We always assume that both $M$ and $N$ go to infinity and $M$ may depend on $N$. Specifically, we need to
  divide three different regimes  according to the relative ratio of  $M$ and $N$:
\begin{enumerate}[label=\Roman*), ref=\Roman*]
\item \label{enu:case_1}
  Weakly correlated regime where $M/N \rightarrow \infty$;
\item \label{enu:case_2}
  Intermediate regime where  $M/N \rightarrow \gamma \in (0,\infty)$;
\item \label{enu:case_3}
  Strongly correlated regime where $M/N \rightarrow 0$.
\end{enumerate}

Below in all the three cases, we use the scaling function $g(\cdot)$ or $g(k; \cdot)$ to facilitate our statement of the results. Although the scaling function clearly depends on $M$ and $N$, we suppress the dependence for notational simplicity.

\begin{thm}[Normality in case \ref{enu:case_1}] \label{cor:normality}
  Suppose that   $\lim_{N\to\infty} M/N = \infty$. For a fixed  $k \in \natN$,  let $x_N(k) = N(\psi(1 - k + N) - \log N)$ and
  %$\xi_k$ be the $k$-th largest of the matrix $\log(\Pi^*_M \Pi_M)$. We have that $\frac{N}{M + 1}\xi_k - N\log N$ converges to $1/2 - k = \lim_{N \to \infty} x_N(k)$ in distribution, and furthermore $\sqrt{\frac{M + 1}{N}} \left( \frac{N}{M + 1}\xi_k - N\log N - x_N(k) \right)$ converges in distribution to a standard normal random variable.
   \begin{equation} \label{eq:change_of_v_norm}
    g(k; \xi) = N\log N + x_N(k) + \xi\sqrt{\frac{N}{M + 1}}.
  \end{equation}
 Then the following hold uniformly for any $\xi, \eta$ in a compact set  of $\realR$.
  \begin{enumerate}
  \item \label{enu:thm:normal_1} For the correlation kernel  \eqref{transformK},
    \begin{equation}  \label{M>N}
      \lim_{N \to \infty} \sqrt{\frac{M + 1}{N}}
      e^{\sqrt{\frac{M+1}{N}}(k-1)(\xi-\eta)}
       \K_N\left(\frac{M + 1}{N} g(k; \xi), \frac{M + 1}{N} g(k; \eta)\right) = \frac{1}{\sqrt{2\pi}}  e^{-\frac{1}{2}\eta^2}.
    \end{equation}
  \item \label{largnormal}
    For   the $k$-th largest eigenvalue  $\xi_k$ of  $\log(\Pi^*_M \Pi_M)$,
    \begin{equation}
      \lim_{N \to \infty}  \Prob\Big(  \xi_k \leq   \frac{M + 1}{N}  g(k;\xi )\Big)=\int^{\xi}_{-\infty} \frac{1}{\sqrt{2\pi}} e^{-\frac{1}{2}t^2} dt.
    \end{equation}
  \end{enumerate}
\end{thm}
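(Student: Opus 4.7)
The plan is to perform saddle-point analysis on the double-contour integral
\eqref{transformK}. First I would collapse the closed contour $\Sigma$ onto the
simple poles of $\Gamma(t)$ at $t=0,-1,\ldots,-(N-1)$ to obtain the residue
expansion
\[
\K_N(x,y) = \sum_{j=0}^{N-1} \frac{(-1)^j\,e^{-jx}}{j!\,\Gamma(N-j)^{M+1}}\,I_j(y),
\quad
I_j(y) := \int_{c-i\infty}^{c+i\infty} \frac{ds}{2\pi i}\,
\frac{e^{-ys}}{s+j}\,\frac{\Gamma(s+N)^{M+1}}{\Gamma(s)}.
\]
The integrand of $I_j$ is entire in $s$ (the zero of $1/\Gamma(s)$ at $s=-j$
cancels the pole of $1/(s+j)$), so the $s$-contour can be deformed freely. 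The
change of variables \eqref{eq:change_of_v_norm} is calibrated so that
$\tfrac{M+1}{N}g(k;\xi)=(M+1)\psi(N-k+1)+\xi\sqrt{(M+1)/N}$, which singles out
$j=k-1$ as the dominant residue: the enormous factor $e^{-(k-1)(M+1)\psi(N-k+1)}$
is exactly what is needed to cancel $\Gamma(s+N)^{M+1}\big|_{s=-(k-1)}=\Gamma(N-k+1)^{M+1}$
emerging from $I_{k-1}$.

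For the Laplace analysis of $I_{k-1}(y)$, I would substitute
$s=-(k-1)+u\sqrt{N/(M+1)}$ and use the local expansions
\begin{align*}
\frac{1}{\Gamma(-(k-1)+u)} &= (-1)^{k-1}(k-1)!\,u + O(u^2),\\
\log\Gamma(N-k+1+u) &= \log\Gamma(N-k+1)+u\psi(N-k+1)
+\tfrac{u^2}{2}\psi'(N-k+1)+\cdots,
\end{align*}
together with $\psi'(N-k+1)=1/N+O(N^{-2})$. The enormous linear term
$u(M+1)\psi(N-k+1)$ coming from the Gamma expansion cancels against the
corresponding piece in $-uy$, leaving the quadratic exponent $v^2/2-v\eta$ in
the rescaled variable $v$. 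The vertical-contour integral
$\int\frac{dv}{2\pi i}\,e^{v^2/2-v\eta}=(2\pi)^{-1/2}e^{-\eta^2/2}$ then yields
\[
\K_N(x,y) \sim e^{(k-1)(y-x)}\sqrt{\tfrac{N}{M+1}}\,\frac{1}{\sqrt{2\pi}}\,
e^{-\eta^2/2},
\]
which combined with the gauge factor $e^{\sqrt{(M+1)/N}(k-1)(\xi-\eta)}$ and the
overall multiplier $\sqrt{(M+1)/N}$ in \eqref{M>N} produces exactly the claimed
limit, since $y-x=(\eta-\xi)\sqrt{(M+1)/N}$.

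The remaining residues $j\ne k-1$ pick up a ratio of Gamma-function prefactors
\[
\left(\frac{\Gamma(N-k+1)}{\Gamma(N-j)}\right)^{M+1}e^{-(j-k+1)(M+1)\psi(N-k+1)}
\approx \exp\!\Bigl(-\tfrac{(M+1)(j-k+1)^2}{2N}\Bigr),
\]
obtained from the Taylor expansion of $\log\Gamma$ around $N-k+1$ combined with
$\psi'(N-k+1)\approx 1/N$; this is super-exponentially small in $(M+1)/N\to\infty$
for every $j\ne k-1$. Moreover, since the zero of $1/\Gamma(s)$ at $s=-(k-1)$ is
no longer cancelled by a pole of $1/(s+j)$ (which is at $s=-j$), the Laplace
integral picks up an extra factor of $u$, contributing an additional
$\sqrt{N/(M+1)}$ that only strengthens the suppression. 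Summing over $j\ne k-1$
and combining with a uniform continuity argument in $\xi,\eta$ on compact sets
then establishes Part (\ref{enu:thm:normal_1}).

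For Part (\ref{largnormal}), observe that the limit kernel $(2\pi)^{-1/2}e^{-\eta^2/2}$
is rank one on $L^2((\xi,\infty))$ with unique non-zero eigenvalue $1-\Phi(\xi)$,
so its Fredholm determinant equals $\Phi(\xi)=\int_{-\infty}^{\xi}(2\pi)^{-1/2}e^{-t^2/2}dt$.
Applying Part (\ref{enu:thm:normal_1}) in turn to each of the neighboring
$k'$-th windows and using that consecutive windows are separated by gaps
$\sim (M+1)/N\gg\sqrt{(M+1)/N}$ (their width), I would show that with probability
tending to $1$ exactly $k-1$ of the eigenvalues $\xi_{k'}$ (namely those with
$k'<k$) lie above $\tfrac{M+1}{N}g(k;\xi)$, so the event
$\{\xi_k\le\tfrac{M+1}{N}g(k;\xi)\}$ reduces asymptotically to the absence of
points above $\xi$ in the $k$-th window, whose probability is $\Phi(\xi)$. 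I
expect the main obstacle to be a quantitative intermediate-scale kernel bound
needed to justify the rank-one reduction of the Fredholm determinant on the
full half-line $(\tfrac{M+1}{N}g(k;\xi),\infty)$ rather than on each window
in isolation, together with the book-keeping required to verify that the
super-exponential suppression of the $j\ne k-1$ residues persists uniformly
in $\xi,\eta$.
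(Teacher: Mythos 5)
Your proposal takes a genuinely different route from the paper's proof. The paper keeps both $s$ and $t$ as contour variables and runs a steepest-descent argument on the double-contour integral \eqref{transformK}: it cuts the $t$-contour $\Sigma$ into a small circle $\Sigma_0(1-k)$ around the saddle $t = 1-k$ and two closed polygonal pieces $\Sigma_-(\tfrac{1}{2}-k)\cup\Sigma_+(\tfrac{3}{2}-k)$ encircling the remaining poles, cuts the $s$-line into a local and a global piece, and estimates each piece via \eqref{eq:estimate_MF}--\eqref{eq:minor_est_normal}. You instead collapse $\Sigma$ onto the $N$ residues of $\Gamma(t)$, obtain a sum of single-contour integrals $I_j(y)$, and identify $j=k-1$ as dominant. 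Both arguments pivot on the same saddle $s=-(k-1)$ and the same local quadratic expansion of $\log\Gamma(s+N)$, and your treatment of the dominant residue (including the cancellation between the zero of $1/\Gamma(s)$ and the pole of $1/(s+k-1)$, and the resulting Gaussian integral) is correct. The residue decomposition trades one double steepest-descent estimate for $N$ Laplace-type estimates plus a summability argument.

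There are two places where the write-up is below proof level. First, the bound $\exp\bigl(-\tfrac{(M+1)(j-k+1)^2}{2N}\bigr)$ for the subdominant residues comes from Taylor-expanding $\log\Gamma$ around $N-k+1$, which is uniformly valid only for $j-k+1 = o(N)$; when $j$ is of order $N$ (so that $N-j$ is of order $1$) a separate estimate is needed, and the conclusion that the sum over $j\neq k-1$ is negligible has to be established uniformly in $\xi,\eta$. The paper sidesteps this by estimating the integrand on the closed contours $\Sigma_\pm$ directly, which is precisely the content of \eqref{eq:minor_est_normal} and its proof. Second, for part \ref{largnormal}, the heuristic (rank-one reduction plus window separation) matches the paper's strategy, but the quantitative ingredient you correctly identify as the "main obstacle" --- tail bounds on $\int\K_N(x,x)\,dx$ over the regions beyond $g(1;C)$ and between successive windows --- is not a gap you can wave at; it is the content of Theorem~\ref{thm:normal} and must be proved (the paper spends about as much space on it as on part \ref{enu:thm:normal_1}). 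The paper then also controls the two-point correlations via \eqref{eq:2_pt_no} to rule out two eigenvalues in one window, and runs an induction over $k$; your sketch suppresses these steps. So: correct strategy and correct dominant-term computation by a genuinely different decomposition, but the uniform-in-$j$ tail of the residue sum and the intermediate-scale integrated kernel bounds are real missing pieces rather than routine bookkeeping.
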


To  describe the edge statistics   in the critical case    (see [4, eq(13)] or Section 3.2 below for the bulk statistics),  we need to  define   a new family of correlation kernels,  depending on a parameter $\gamma \in (0,\infty) $,  as
    \begin{equation} \label{softlimitK}
      K_{\crit}(x, y; \gamma) = \int^{1 + i\infty}_{1 - i\infty} \frac{ds}{2\pi i} \oint_{\Sigma_{-\infty}} \frac{dt}{2\pi i} \frac{1}{s - t} \frac{\Gamma(t)}{\Gamma(s)} \frac{e^{\frac{\gamma s^2}{2} - y s}}{e^{\frac{\gamma t^2}{2} - xt}},
    \end{equation}
    where the    anticlockwise contour $\Sigma_{-\infty}\subset\{z\in \compC: \Re z<1\}$, starts from $-\infty - i\epsilon$, encircles   $\{ 0, -1, -2, \dotsc \}$, and then goes to $-\infty + i\epsilon$ for some $\epsilon>0$.  Accordingly,  we introduce  a family of Fredholm determinants
  \begin{equation} \label{largcritical}
    F_{\text{crit}}(x; \gamma):= \det(I - \mathbf{ K_{\crit}}) = 1+\sum_{n=1}^{\infty} \frac{(-1)^n}{n!} \int_{x}^{\infty} \cdots \int_{x}^{\infty}    \det[  K_{\crit}(t_i, t_j; \gamma)  ]_{i,j=1}^n \,dt_1 \cdots dt_n,
  \end{equation}
where $\mathbf{ K_{\crit}}$ is the integral operator acting on  $L^2((x, +\infty))$ with the kernel $  K_{\crit}$.
  $ F_{\text{crit}}(x; \gamma)$ is  a    continuous and nondecreasing   function in $x$, such that
$ F_{\text{crit}}(x; \gamma) \to 1$ as $x \to \infty$ (cf. proof of part  \ref{enu:thm:crit_2} of Theorem \ref{thm:crit}).
 Although a direct proof that $ F_{\text{crit}}(x; \gamma) \to 0$ as $x \to -\infty$ is not trivial, and it is not given here, we do not pursue it and leave it as an open problem, see Question \ref{que:1} in Section \ref{sec:open_ques}.

\begin{thm}[Criticality in case \ref{enu:case_2}] \label{thm:crit}
  Suppose that  $\lim_{N\to\infty} M/N =\gamma \in (0, \infty)$. Let
  \begin{equation}
    g(\xi) = (M+1)\left(\log N - \frac{1}{2N}\right) + \xi.
  \end{equation}
Then the following hold uniformly for any $\xi, \eta$ in a compact set  of $\realR$.
  \begin{enumerate}
  \item \label{enu:thm:crit_1}
     For the correlation kernel  \eqref{transformK},
    \begin{equation} \label{M=N}
      \lim_{N \to \infty}   \K_N \big(g(\xi) , g(\eta) \big) = K_{\crit}(\xi, \eta; \gamma).
    \end{equation}
  \item \label{enu:thm:crit_2}  For the largest eigenvalue $\xi_1$ of $\log(\Pi_{M}^{*}\Pi_M)$,
 %   Let $C \in \realR$, and $\mathbf{K}_{\crit}$ be the integral operators on $L^2((C, +\infty))$ whose kernel is $K_{\crit}(\xi, \eta; \gamma)$. Then as $N \to \infty$,
  \begin{equation} \label{distri}
    \lim_{N \to \infty} \Prob(\xi_1 \leq g(\xi)) =F_{\mathrm{crit}}(\xi; \gamma).
  \end{equation}
 %where $\mathbf{K}_{\crit}$ is the integral operator acting on  $L^2((\xi, +\infty))$  with the kernel $  K_{\crit}(\xi, \eta; \gamma) $.
  \end{enumerate}
\end{thm}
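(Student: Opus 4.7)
The plan has two parts mirroring the structure of the theorem: establish the pointwise kernel convergence \eqref{M=N} via asymptotic analysis of the double contour integral \eqref{transformK}, then lift it to Fredholm determinant convergence via a uniform tail estimate on the kernel.

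For part \ref{enu:thm:crit_1}, I substitute $x = g(\xi)$ and $y = g(\eta)$ into \eqref{transformK} and keep $s, t$ of order one. The only $N$-dependent Gamma factor is $(\Gamma(s + N)/\Gamma(t + N))^{M + 1}$, and the two-term Stirling expansion
$$\log \Gamma(s + N) - \log \Gamma(t + N) = (s - t) \log N + \frac{s(s - 1) - t(t - 1)}{2N} + O\!\left(\frac{1}{N^2}\right),$$
valid uniformly on compact subsets of $\compC$, combines with $e^{xt - ys}$ in the following way: the $(\log N)$-terms cancel, the $-(M + 1)/(2N)$ shift built into $g$ absorbs the linear-in-$s, t$ parts of $s(s - 1)$ and $t(t - 1)$, and the residual contribution is $(M + 1)(s^2 - t^2)/(2N) \to \gamma(s^2 - t^2)/2$. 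The integrand of \eqref{transformK} therefore converges pointwise to that of \eqref{softlimitK}. To pass to the limit under the integrals, I take the $s$-contour as the vertical line $\Re s = 1$, on which the reflection formula gives $|1/\Gamma(s)| \sim e^{\pi |\Im s|/2}$ while $|e^{\gamma s^2/2 - \eta s}| \asymp e^{-\gamma (\Im s)^2/2}$ provides a Gaussian majorant uniform in $N$; the $t$-contour $\Sigma$ is deformed onto a truncation of $\Sigma_{-\infty}$ contained in a strip $\Re t \geq -R$, controlled via $|\Gamma(t)|$ bounds from the reflection formula and a uniform Stirling estimate on $(\Gamma(N)/\Gamma(t + N))^{M + 1}$ along the outgoing horizontal rays. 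Dominated convergence then produces \eqref{M=N}.

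For part \ref{enu:thm:crit_2}, the Fredholm determinant \eqref{larg} evaluated at $x = g(\xi)$ becomes, after the change of variable $t_i = g(\xi) + u_i$, a Fredholm determinant of the shifted kernel on $L^2((0, \infty))$. Given part \ref{enu:thm:crit_1}, what is missing is a uniform tail bound of the form
$$\bigl| \K_N(g(\xi) + u, g(\xi) + v) \bigr| \leq C\, e^{-c(u + v)}, \qquad u, v \geq 0,$$
with $c, C > 0$ independent of $N$ in the regime $M/N \to \gamma$, together with the matching bound for $K_{\crit}(\xi + u, \xi + v; \gamma)$. The latter is soft: shifting the $s$-contour of \eqref{softlimitK} to $\Re s = R$ for $R$ large and optimising in $R$ turns the Gaussian factor $e^{\gamma s^2/2 - (\xi + v) s}$ into the desired exponential decay. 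Combined with Hadamard's inequality this yields a summable majorant for the Fredholm series on $L^2((0, \infty))$, and dominated convergence delivers \eqref{distri}; the $x \to +\infty$ limit of $F_{\crit}(x; \gamma)$ asserted in the paragraph preceding the theorem drops out as a by-product.

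The main obstacle is the uniform tail bound on $\K_N$ itself. In the weakly correlated regime of Theorem \ref{cor:normality} the saddle is cleanly isolated at distance $\sqrt{N/(M + 1)}$ and drives classical steepest-descent estimates, but at criticality the saddle of the rescaled phase collapses onto the accumulation of poles $0, -1, -2, \dotsc$ of $\Gamma(t)$, so no single steepest-descent contour separates the pole structure from the saddle behaviour. My strategy is to shift the $s$-contour to $\Re s = 1 + R(u)$, with $R(u)$ chosen depending on the decay target, extracting Gaussian decay from the combination $e^{-v s}(\Gamma(s + N)/\Gamma(N))^{M + 1}$ after Stirling, while keeping the $t$-contour inside a bounded region where $|\Gamma(t)|$ and $|\Gamma(t + N)|^{-(M + 1)}$ stay uniformly controlled; the pole at $s = t$ is handled either by integration by parts against $(s - t)^{-1}$ or by splitting the $s$-contour around the $t$-contour with residue accounting. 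Executing these deformations uniformly in $N$ under the sole constraint $M/N \to \gamma$ is the technical heart of the argument.
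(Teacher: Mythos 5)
Your Part \ref{enu:thm:crit_1} argument is essentially the paper's: the same two-term Stirling expansion, the same cancellation mechanism where $g$'s $-(M+1)/(2N)$ shift eats the linear-in-$(s,t)$ piece of $s(s-1) - t(t-1)$, the same observation that the leftover $\frac{(M+1)}{2N}(s^2-t^2)$ tends to $\frac{\gamma}{2}(s^2-t^2)$, and dominated convergence justified by a uniform majorant on $\Re s = 1$ and exponential decay of the phase along the outgoing rays of the $t$-contour. One slip in phrasing: you cannot ``deform $\Sigma$ onto a truncation $\Re t\geq -R$'' since $\Sigma$ must keep enclosing all $N$ poles out to $-N+1$; the correct statement (which the paper makes via $\Sigma_-(1/2)$ and the estimate \eqref{eq:est_exp_F_crit}) is that the contribution from $\Re t < -N^{1/4}$, while present, is exponentially negligible, so the integral \emph{concentrates} on the bounded region rather than living there.

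For Part \ref{enu:thm:crit_2} you take a genuinely different route. The paper factors $\frac{1}{s-t}=\int_0^\infty e^{-(s-t)u}\,du$ to write $\mathbf{\K}_N = \mathbf{G}_N\mathbf{H}_N$ with $G_N, H_N$ each a \emph{single} contour integral (see \eqref{eq:convolutions}--\eqref{eq:G_N}), proves Hilbert--Schmidt convergence $\mathbf{G}_N\to\mathbf{G}_{\infty,\gamma}$, $\mathbf{H}_N\to\mathbf{H}_{\infty,\gamma}$, and gets trace-norm convergence (hence \eqref{distri}) for free. You instead propose a pointwise uniform tail bound $|\K_N(g(\xi)+u,g(\xi)+v)|\leq Ce^{-c(u+v)}$ on the \emph{double} integral, then Hadamard plus dominated convergence on the Fredholm series. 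Your scheme is a standard and legitimate alternative and would close, but it buys you a harder estimate: you must control $\sup$-norm decay of the full double contour integral including the $\frac{1}{s-t}$ factor, whereas the paper's factorization reduces everything to $L^2$ estimates of single contour integrals $G_N$, $H_N$, where the decay in $x+r$ (resp.\ $r+y$) is read off directly from $e^{(x+r-C)t}$ with $\Re t\leq -1/2$ (resp.\ $e^{-(y+r-C)s}$ with $\Re s=1$), after noting both $g_N$ and $g_\infty$ have residue $1$ at $t=0$ so the contour can be moved left of the origin. Your worry about ``the pole at $s=t$'' when shifting $\Re s$ rightward is a phantom: on the paper's contours $\Re s=1>1/2\geq\Re t$ always, so $|s-t|\geq 1/2$ and no residue accounting arises. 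So your plan is sound in outline, but the paper's factorization is the cleaner execution and is worth adopting, since it sidesteps precisely the ``technical heart'' you flag at the end.
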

 % \begin{figure}[h]
 %      \centering
 %      \begin{tikzpicture}
 %        \draw (-5, 0) -- (1, 0);
 %        \draw (-5, -0.25) -- (0, -0.25);
 %        \draw (0, -0.25) -- (0.5, 0);
 %        \draw (0.5, 0) -- (0, 0.25);
 %        \draw (0, 0.25) -- (-5, 0.25);
 %     \end{tikzpicture}
 %    \caption{$\Sigma_{\infty}$ in Theorem \ref{thm:crit}.}\label{fig:Sigma_infty}
 %  \end{figure}
\begin{rem}
  Part \ref{enu:thm:crit_1} of Theorem \ref{thm:crit} can be stated in an alternative way. We let
  \begin{equation} \label{eq:two_crit_rel}
    e^{(\eta-\xi)t_0}K_{\crit}(\xi, \eta; \gamma)= \widehat{K}_{\crit}(\xi-\gamma t_0, \eta-\gamma t_0; \gamma),
  \end{equation}
  where $t_0$ is the unique positive solution of $\psi'(t_0) = \gamma$, such that $\psi$ is the digamma function as in \eqref{eq:digamma_series}. Then
  \begin{equation} \label{M=Nhat}
    \widehat{K}_{\crit}(\xi, \eta; \gamma) = \int^{1 + i\infty}_{1 - i\infty} \frac{ds}{2\pi i} \oint_{\widehat{\Sigma}_{-\infty}} \frac{dt}{2\pi i} \frac{1}{s - t} \frac{\Gamma(t + t_0)}{\Gamma(s + t_0)} \frac{e^{\frac{\gamma s^2}{2} - \eta s}}{e^{\frac{\gamma t^2}{2} - \xi t}}
  \end{equation}
  with $\widehat{\Sigma}_{-\infty}$ starting from $-\infty - i\epsilon$, encircling $\{ -t_0, -t_0 - 1, -t_0 - 2, \dotsc \}$ in positive direction, and then going to $-\infty + i\epsilon$. $t_0$ decreases as $\gamma$ increases, as
  \begin{equation} \label{eq:behaviour_t_0}
    t_0 = \frac{1}{\sqrt{\gamma}} + \bigO\Big(\frac{1}{\gamma}\Big) \quad \text{as $\gamma \to +\infty$}, \quad t_0 = \frac{1}{\gamma} + \bigO(1) \quad \text{as $\gamma \to 0$}.
  \end{equation}
  With this choice, we know that  the Taylor expansion of $\log \Gamma(t+t_0) -\frac{1}{2}\gamma t^2$ at zero has vanishing quadratic  and    non-vanishing cubic  terms, which are  consistent with the definition of the Airy function. %, so as $\gamma \to 0$ its behaviour is better than that of $K_{\crit}$. %See Theorem \ref{thm1.4} below.
  The explicit transition from the critical case (case II) to cases I and III will be given in Theorem \ref{thm1.4}.
\end{rem}

%\begin{rem}
%   After the first preprint version of this paper had been posted, we added a derivation of the bulk local statistics of the finite-time Lyapunov exponents in case \ref{enu:case_2}, which is inspired by but different from \cite[Equation (13)]{Akemann-Burda-Kieburg18}. See Theorem \ref{thm:bulkcrit} and its proof in Section \ref{Sectbulkcrit}.
%\end{rem}

In case \ref{enu:case_3}, we state a more general result that covers not just the limiting distribution of the largest singular values, but also local limiting correlations of the singular values in the ``bulk''. We also give a definition of the celebrated Airy kernel by
\begin{equation} \label{eq:Airy}
  K_{\Airy}(x, y) = \int_{\mathcal{C}^{\infty}_<} \frac{ds}{2\pi i} \int_{\Sigma^{\infty}_>} \frac{dt}{2\pi i} \frac{1}{s - t} \frac{e^{\frac{s^3}{3} - ys}}{e^{\frac{t^3}{3} - xt}}, \end{equation}
where the contours are upwards and parametrized as
\begin{equation} \label{eq:defn_Airy_contours}
  \begin{split}
    \Sigma^\infty_> = {}& \{ -1 + re^{2\pi i/3} \mid 0 \leq r < \infty \} \cup \{ -1 + re^{\pi i/3} \mid -\infty < r \leq 0 \}, \\
    \mathcal{C}^\infty_< = {}& \{ 1 + re^{4\pi i/3} \mid -\infty < r \leq 0 \} \cup \{ 1 + re^{5\pi i/3} \mid 0 \leq r <\infty \}.
  \end{split}
\end{equation}
%be oriented upward, where $R \in \realR_+$ or $R = \infty$. Then we have
%\begin{equation} \label{eq:defn_Airy_contours}
%  \begin{split}
%    \Sigma^R_> = {}& \{ -1 + re^{2\pi i/3} \mid 0 \leq r \leq R \} \cup \{ -1 + re^{\pi i/3} \mid -R \leq r \leq 0 \}, \\
%    \mathcal{C}^R_< = {}& \{ 1 + re^{4\pi i/3} \mid -R \leq r \leq 0 \} \cup \{ 1 + re^{5\pi i/3} \mid 0 \leq r \leq R \},
%  \end{split}
%\end{equation}
%This representation of $K_{\Airy}(x, y)$ is equivalent to \cite[Equation (1.17)]{Liu-Wang-Zhang14}, noting that $K_{\Airy}(x, y) = K_{\Airy}(y, x)$.
The GUE Tracy-Widom distribution is thus given by
\begin{equation}
  F_{\text{GUE}}(x) = \det(I - \mathbf{K}_{\Airy}),
\end{equation}
where $\mathbf{K}_{\Airy}$ is the integral operator on $L^2((x, +\infty))$ with the kernel $K_{\Airy}$.

\begin{thm} [GUE statistics in case \ref{enu:case_3}] \label{thm1.3}
  Suppose that  $\lim_{N\to\infty} M/N =0$ and $\lim_{N\to\infty} M =\infty$. With $\theta \in [0, \pi)$, let
  \begin{equation}
    g(\xi) = M \log N + \log(M+1) + v_M(\theta) + \frac{\xi}{\rho_{M, N}},
  \end{equation}
  with
  \begin{equation} \label{parameter-vm}
    v_M(\theta) =
    \begin{cases}
      (M + 1)\log \sin\theta  -M\log \sin\frac{M\theta}{M + 1}   -\log  \sin \frac{\theta}{M + 1} -\log (M + 1), & \theta \in (0, \pi), \\
      M \log(1 + M^{-1}), & \theta = 0,
    \end{cases}
  \end{equation}
  and
  \begin{equation} \label{eq:form_rho}
    \rho_{M, N} = \rho_{M, N}(\theta) =
    \begin{cases}
      \frac{N \sin\theta \sin\frac{\theta}{M + 1}}{\pi \sin(1-\frac{1}{M + 1})\theta} ,
      & \theta \in (0, \pi), \\
      2^{\frac{1}{3}} \Big(\frac{N}{M+1}\Big)^{\frac{2}{3}}, & \theta = 0.
    \end{cases}
  \end{equation}
  Then the  following hold uniformly  for $\xi,\eta$ in a compact subset of $\mathbb{R}$.
    \begin{enumerate}
      \item \label{enu:thm:sine}
        When $\theta \in (0, \pi)$,
        \begin{equation} \label{eq:thm:sine}
          \lim_{N \rightarrow \infty} e^{- \pi (\xi - \eta)   \cot\theta}  \frac{1}{\rho_{M, N}}  \K_{N}\big(g(\xi), g(\eta)\big) = \frac{\sin\pi(\xi-\eta)}{\pi(\xi - \eta)}.
        \end{equation}
      \item \label{enu:thm:airy}
       When  $\theta = 0$,
        \begin{equation} \label{eq:thm:airy}
          \lim_{N \rightarrow \infty} e^{-\frac{N}{M+1}  \frac{\xi -\eta}{\rho_{M,N}}} \frac{1}{\rho_{M,N}} \K_{N}\big(g(\xi), g(\eta)\big) = K_{\Airy}(\xi, \eta).
        \end{equation}
      %  where $K_{\mathrm{Ai}}$ is the Airy kernel as given in \eqref{eq:Airy}.
      \item \label{enu:thm:airy_trace} For the largest eigenvalue $\xi_1$ of $\log(\Pi_{M}^{*}\Pi_M)$,
      %  Let $C \in \realR$. Then as $N \to \infty$,
  \begin{equation} \label{distri_TW}
    \lim_{N \to \infty} \Prob( \xi_1  \leq g(\xi)) = F_{\mathrm{GUE}}(\xi).
  \end{equation}
    \end{enumerate}
  \end{thm}

  Note that $v_{M}(\theta)$ is monotonically decreasing  as $\theta \in [0,\pi)$  such that for a fixed $M$, $v_{M}(0) \to -M\log(M/(M + 1))$ as $\theta \to 0$ and $v_{M}(\theta) \to -\infty$ as $\theta \to \pi$. In Theorem \ref{thm1.3}, $\theta \in [0, \pi)$ parmetrizes the spectrum of singular values from the right end through the bulk.

\begin{rem}
   Parts  \ref{enu:thm:sine}  and  \ref{enu:thm:airy} of Theorem \ref{thm1.3} are  straightforward extensions of \cite[Theorems 1.1 and 1.2]{Liu-Wang-Zhang14} where $M$ is assumed to be a fixed integer, such that formally part \ref{enu:thm:sine} differs from \cite[Theorem 1.1]{Liu-Wang-Zhang14} by change of variables and the conjugation $e^{(x - y)N}$, while part \ref{enu:thm:airy} agrees with the $M \to \infty$ formal limit of \cite[Theorem 1.2]{Liu-Wang-Zhang14}.
   But it is not obvious   from \cite{Liu-Wang-Zhang14}  that the sine and Airy kernels will still hold true  when $M$ goes to  infinity much more slowly  than  $N$. The convergence of the largest eigenvalue  in part \ref{enu:thm:airy_trace}  of Theorem \ref{thm1.3}  was not studied  in \cite{Liu-Wang-Zhang14}.
Actually,   in Theorems \ref{cor:normality}  and \ref{thm1.3} we need to choose $N$- and $M$-dependent spectral representations $x_{N}(k)$ and $v_M(\theta)$ respectively in order to investigate the local statistics. These may indicate the complexity and precision of local fluctuation of eigenvalues; see \cite{Akemann-Burda-Kieburg18} for relevant numerical stimulation.  But, noting $x_{N}(k)=0.5-k+\bigO(1/N)$ by \eqref{eq:defn_u_w} bellow,
    $\rho_{M, N} =  N/(\pi (M+1))\big(1 + \bigO( 1/(M+1) ) \big)$  as in \eqref{eq:form_rho} and $v_M(\theta)=v_{\infty}(\theta)+\bigO(1/(M+1))$ where $v_{\infty}(\theta)$ is defined in \eqref{vtheta} that is independent of $M$, one can replace $x_{N}(k)$ by $0.5-k$ in part \ref{enu:thm:normal_1} of Theorem \ref{cor:normality}  if $N \ll M \ll N^3$ and replace $v_M(\theta)$ by $v(\theta)$ in part \ref{enu:thm:sine} of Theorem \ref{thm1.3} if $M \ll N \ll M^2$.
\end{rem}

\begin{rem} To the best of our knowledge, there are at least two kinds of transitions between the GUE Tracy-Widom  and  Gaussian distributions  in RMT, one for  the largest eigenvalue of the deformed GUE ensemble, which is  found by Johansson (see \cite{Johansson07}, \cite{Liechty-Wang18}), and the other for  the largest eigenvalue of  the spiked  complex Wishart matrix,  proved by Baik, Ben Arous and P\'{e}ch\'{e} \cite{Baik-Ben_Arous-Peche05}.
  We believe that   the  phase transition  in Theorem \ref{thm:crit}  is  different from the former  two,  at least   seen from the  expression of  correlation kernels. To see the difference between these transitions, one may consider the correlation kernel $K(x, y)$ in each case, and compute  the asymptotics of $K(x, x)$ as $x \to + \infty$. But we omit the details here. Besides, the soft edge kernel   in  part  \ref{enu:thm:crit_1}  of Theorem \ref{thm:crit} was independently obtained by Akemann, Burda and Kieburg \cite{Akemann-Burda-Kieburg18}. Though different in form, our integral representation \eqref{softlimitK} should be equivalent to \cite[Equation (19)]{Akemann-Burda-Kieburg18}.
\end{rem}

The rest of this article is organized as follows. In the next Section \ref{sec:proofs} we prove the main theorems stated above. In Section \ref{sect:discuss} we discuss a few relevant questions.

\section{Proofs of main theorems} \label{sec:proofs}

In the proofs, $B(q,r)$ denotes an open ball in the complex plane with center $q \in \compC$ and radius $ r>0$, $\epsilon$ may mean different constants in different formulas, and $\bigO$ and $o$ are used in the usual sense. We use $\psi(z)$ to denote the digamma function \cite[5.2.2]{Boisvert-Clark-Lozier-Olver10}, which admits a series representation \cite[5.7.6, 5.15.1]{Boisvert-Clark-Lozier-Olver10} for $z\neq 0, -1,-2,\ldots$
\begin{equation} \label{eq:digamma_series}
  \psi(z) = -\gamma_0 + \sum^{\infty}_{n = 0} \left( \frac{1}{n + 1} - \frac{1}{n + z} \right), \quad \psi'(z) = \sum^{\infty}_{n = 0} \frac{1}{(n + z)^2},
\end{equation}
where $\gamma_0$ is the Euler constant. By Stirling's formula (see e.g.  \cite[5.11.1, 5.11.2]{Boisvert-Clark-Lozier-Olver10}),
 % for $\arg z \in (-\pi + \epsilon, \pi - \epsilon)$ and  the reflection formula \cite[5.5.3]{Boisvert-Clark-Lozier-Olver10}  for $\Gamma(z)$  if  $z$ is not in this sector,
 we have  as $z\to \infty$  in the sector  $|\arg z| \leq \pi - \epsilon$ for all $\epsilon > 0$, uniformly
\begin{gather}
  \log\Gamma(z)= (z-\frac{1}{2})\log z  -z+\log\sqrt{2\pi} +\frac{1}{12z} + \bigO \Big( \frac{1}{z^2} \Big), \label{stirling} \\
  \psi(z) =   \log z -\frac{1}{2z} + \bigO \Big( \frac{1}{z^2} \Big). \label{digammaa}
\end{gather}

%\begin{gather}
%  \log\Gamma(z)= (z-\frac{1}{2})\log z  -z+\log\sqrt{2\pi} +\frac{1}{12z} + \bigO \left( \frac{1}{\dist(z, \intZ_{\leq 0})^2} \right), \label{stirling} \\
%  \psi(z) - \left( \log z -\frac{1}{2z} \right) = \bigO \left( \frac{1}{\dist(z, \intZ_{\leq 0})^2} \right). \label{digammaa}
%\end{gather}

\subsection{Proofs of Theorems \ref{cor:normality} and \ref{thm:crit}} \label{subsec:proofs_1.1_1.2}

Before the proof of the theorems, we define several functions to be used later. Let $N, M$ be fixed positive integers, and $w$ be a real parameter. We define  a function $F(t; w)$ of complex variable $t$, depending on $N$ and $w$, as
\begin{equation} \label{eq:defn_F}
  F(t; w) = (\log N + w/N)t - \log \Gamma(t + N) + \log \Gamma(N),
\end{equation}
where $\log$ takes the principal branch. It is easy to see that
\begin{equation} \label{eq:F_derivatives}
  F'(t; w) = (\log N + w/N) - \psi(t + N), \quad F''(t; w) = -\psi'(t + N).
\end{equation}

We see from \eqref{eq:digamma_series} that $\Re \psi'(z) > 0$ if $\arg z \in (-\pi/4, \pi/4)$, and as $x \in \realR_+$ runs from $0$ to $+\infty$, $\psi(x)$ runs from $-\infty$ to $+\infty$ monotonically. Hence for any $w \in \realR$, there is a unique $t_w$ such that
\begin{equation} \label{eq:defn_t_w}
  F'(t_w; w) = 0 \quad \text{and} \quad t_w \in (-N, +\infty),
\end{equation}
and we have that
\begin{equation} \label{eq:prop_t_w}
  \text{$t_w$ depends on $w$ monotonically, $t_w \to +\infty$ as $w \to +\infty$ and $t_w \to -N$ as $w \to -\infty$}.
\end{equation}
Hence by \eqref{digammaa} we have for $w$ in a compact subset of $\realR$,
\begin{equation} \label{eq:defn_u_w}
  t_w = \frac{1}{2} + w + \frac{C_{t_w}}{N},
\end{equation}
where $C_{t_w}$ is bounded uniformly as $N \to \infty$. In the subsequent proofs, we  need the property that as $s = c + iy$, the function $\Re F(s; w)$ increases fast  enough as $iy$ goes upward from $0$ to $+i\infty$ or downward from $0$ to $-i\infty$, where $c$ and $w$ are in a compact subset of $\realR$. To see it in a more precise way, we just need to compute $-\Im F'(c + iy; w)$, since
\begin{equation} \label{eq:estimate_F'}
  \begin{split}
    \frac{d}{dy} \Re F(c + iy; w) = {}& -\Im F'(c + iy; w) = \Im \psi(c + iy + N) = -\sum^{\infty}_{n = 0} \Im \frac{1}{n + N + c + iy} \\
    = {}& \sum^{\infty}_{n = 0} \frac{y}{(n + N + c)^2 + y^2} = \int^{\infty}_{N + c} \frac{(1 + \bigO(N^{-1}))y dx}{x^2 + y^2} \\
    = {}& \arctan(N^{-1} y)\big(1 + C_{c, y, w} N^{-1}\big),
  \end{split}
  \end{equation}
where $C_{c, y, w}$ is bounded in $\realR$. We also use the following positively oriented contours (see Figure \ref{intcontour1} bellow): For any $a \in (-N + 1, 1)$, $\Sigma_-(a) = \Sigma^1_-(a) \cup \Sigma^2_-(a) \cup \Sigma^3_-(a) \cup \Sigma^4_-(a) \cup \Sigma^5_-$, where
\begin{equation} \label{eq:defn_Sigma_-}
  \begin{gathered}
    \Sigma^1_-(a) = \{ a - \frac{2 - i}{4} t \mid t \in [0, 1] \}, \quad \Sigma^2_-(a) = \{ a - \frac{2 + i}{4} + \frac{2 + i}{4} t \mid t \in [0, 1] \}, \\
    \Sigma^3_-(a) = \{ -t + \frac{i}{4} \mid t \in [\frac{1}{2} - a, N - \frac{1}{2}] \}, \quad \Sigma^4_-(a) = \{ t - \frac{i}{4} \mid t \in [-N + \frac{1}{2}, a - \frac{1}{2}] \}, \\
    \Sigma^5_- = \{ -N + \frac{1}{2} - it \mid t \in [-\frac{1}{4}, \frac{1}{4}] \}.
  \end{gathered}
\end{equation}
Similarly, for any $b \in (-N + 1, 1/2)$, $\Sigma_+(b) = \Sigma^1_+(b) \cup \Sigma^2_+(b) \cup \Sigma^3_+(b) \cup \Sigma^4_+(b) \cup \Sigma^5_+$, where
\begin{equation}
  \begin{gathered}
    \Sigma^1_+(b) = \{ b + \frac{2 - i}{4} t \mid t \in [0, 1] \}, \quad \Sigma^2_+(b) = \{ b + \frac{2 + i}{4} - \frac{2 + i}{4} t \mid t \in [0, 1] \}, \\
    \Sigma^3_+(b) = \{ t - \frac{i}{4} \mid t \in [b + \frac{1}{2}, 1] \}, \quad \Sigma^4_+(b) = \{ -t + \frac{i}{4} \mid t \in [-1, -b - \frac{1}{2}] \}, \\
    \Sigma^5_+ = \{ 1 + it \mid t \in [-\frac{1}{4}, \frac{1}{4}] \}.
  \end{gathered}
\end{equation}

\subsubsection{Proof of Theorem \ref{cor:normality}}

First we prove part \ref{enu:thm:normal_1} of Theorem \ref{cor:normality} and the technical result Theorem \ref{thm:normal} below. Next we prove part \ref{largnormal} based on them.
\begin{thm} \label{thm:normal}
  Under the setting of Theorem \ref{cor:normality},  let  $\epsilon > 0$, we have
  \begin{enumerate}
  \item \label{enu:thm:normal_3}
    There exists $C_0(\epsilon) > 0$ such that
        \begin{equation}
      \int^{+\infty}_{\frac{M + 1}{N}g(1; C_0(\epsilon))} \K_N(x, x) dx < \epsilon.
    \end{equation}
  \item \label{enu:thm:normal_2}
 For each fixed $k \in \natN$,  there exists $C_k(\epsilon) > 0$ such that
    \begin{equation}
      \int^{\frac{M + 1}{N} g(k; -C_k(\epsilon))}_{\frac{M+1}{N} g(k + 1; C_k(\epsilon))} \K_N(x, x) dx < \epsilon.
    \end{equation}
  \end{enumerate}
\end{thm}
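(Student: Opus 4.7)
The plan is to extend the saddle-point analysis underlying part \ref{enu:thm:normal_1} of Theorem \ref{cor:normality} so as to deliver a uniform Gaussian-type upper bound on $\K_N(x, x)$ for $x$ in the relevant tail regions, then integrate. Writing $x = \frac{M+1}{N}(N\log N + w)$, the diagonal kernel reads
\begin{equation*}
\K_N(x, x) = \int_{c-i\infty}^{c+i\infty}\frac{ds}{2\pi i}\oint_{\Sigma}\frac{dt}{2\pi i}\,\frac{1}{s-t}\,\frac{\Gamma(t)}{\Gamma(s)}\,e^{(M+1)[F(t;w)-F(s;w)]},
\end{equation*}
with $F$ from \eqref{eq:defn_F}. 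The real saddle $t_w$ defined in \eqref{eq:defn_t_w} satisfies $t_w \approx 1-k$ whenever $w$ lies in a $\sqrt{N/(M+1)}$-neighbourhood of $x_N(k)= \tfrac12-k+\bigO(N^{-1})$, by \eqref{eq:defn_u_w}.

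For part \ref{enu:thm:normal_3}, I would parametrize $w = x_N(1) + u\sqrt{N/(M+1)}$ with $u \geq C_0$ and deform both contours to pass through $t_w$ in their respective steepest directions: the $t$-contour becomes a variant of $\Sigma_-(a)$ of \eqref{eq:defn_Sigma_-} with $a$ just to the right of $t_w$, while the $s$-contour is the vertical line through a point just to the right of $t_w$. The local expansion
\begin{equation*}
F(t_w + i\tau; w) - F(t_w; w) = -\tfrac{1}{2}\psi'(t_w + N)\tau^2 + \bigO(|\tau|^3),
\end{equation*}
combined with the monotone-growth property in \eqref{eq:estimate_F'} for the off-saddle pieces, supplies a Gaussian saddle-point estimate of the form $\K_N(x, x) \leq C\sqrt{N/(M+1)}\,e^{-cu^2}$ uniformly in $u \geq C_0$, where the key gain $-cu^2$ comes from evaluating $(M+1)[F(t_w; w)-F(t_{w_1}; x_N(1))]$ along the saddle flow. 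Changing variables $x\mapsto u$ with Jacobian $\sqrt{(M+1)/N}$, the tail integral is at most $C\int_{C_0}^{\infty} e^{-cu^2}\,du$, which falls below $\epsilon$ for $C_0(\epsilon)$ large.

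For part \ref{enu:thm:normal_2}, the same apparatus is applied with $w$ traversing the interval $\bigl[x_N(k+1)+C_k\sqrt{N/(M+1)},\,x_N(k)-C_k\sqrt{N/(M+1)}\bigr]$. As $w$ decreases, $t_w$ moves monotonically from near $1-k$ to near $-k$, and I would choose $a$ in $\Sigma_-(a)$ so that the saddle-segment sits between the poles of $\Gamma(t)$ at $1-k$ and $-k$ while the remaining poles on the "wrong" side yield residues representing the already-counted first $k$ bumps; each such residue is Gaussian-small in the rescaled distance to its bump centre. The remaining double-integral contribution reflects the $(k+1)$-th bump. The net bound takes the form
\begin{equation*}
\K_N(x, x) \leq C\sqrt{N/(M+1)}\left(e^{-cu_k^2} + e^{-cu_{k+1}^2}\right),
\end{equation*}
with $u_k,u_{k+1}$ the rescaled distances from $w$ to $x_N(k)$ and $x_N(k+1)$, respectively, and integration yields the desired $\epsilon$-bound once $C_k$ is large enough.

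The principal obstacle is establishing the saddle-point estimate uniformly as $w$ sweeps through the whole tail region, particularly in part \ref{enu:thm:normal_2} where $t_w$ must be routed between adjacent poles of $\Gamma(t)$. Near these transitions the pre-exponential factor $\Gamma(t)/(s-t)$ is large, and extracting the clean Gaussian suppression requires a careful residue/steepest-descent split together with uniform control of the $s$-contour using \eqref{eq:estimate_F'}. Verifying that the residue contributions reduce exactly to the advertised "neighbouring-bump Gaussian tails", and that the remaining contour integral inherits the same Gaussian suppression, is the main computational task; once this uniform pointwise estimate is in place, integration is a routine Gaussian tail calculation.
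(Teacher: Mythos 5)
The proposal has a genuine gap, and it stems from a misconception about where the decay comes from. You claim that routing both contours through the saddle $t_w$ produces a Gaussian suppression $e^{-cu^2}$, with the gain "coming from evaluating $(M+1)[F(t_w;w)-F(t_{w_1};x_N(1))]$ along the saddle flow." But in the double contour integral the $s$- and $t$-integrands carry the exponent $(M+1)F(t;w)$ and $-(M+1)F(s;w)$ with the \emph{same} $w$ and the \emph{same} saddle $t_w$; moving both contours to $t_w$ makes the saddle values cancel, $e^{(M+1)[F(t_w;w)-F(t_w;w)]}=1$. One can check explicitly: writing $w=x_N(1)+u\sqrt{N/(M+1)}$, $t_w\approx u\sqrt{N/(M+1)}$, one finds $(M+1)[F(t_w;w)-F(0;x_N(1))]\approx u^2/2>0$, which has the \emph{wrong sign} to be a suppression, and in any case cancels against the $s$-integral contribution. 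There is simply no mechanism by which the steepest-descent relocation of the contours produces the claimed $e^{-cu^2}$; indeed the $u^2/2$ terms drop out after completing the square.

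The paper's actual argument is much more modest and does not chase the true saddle. The contours are kept \emph{fixed} at $1-k$ (circle $\Sigma_0(1-k)$ for $t$, vertical line $\mathcal{L}_{1-k}$ at $\Re\sigma=2$ for $s$, plus the fixed outer arcs). The $\xi$-dependence of $g(k;\xi)$ is carried entirely by the factor $q_k(s,t;\xi,\eta)=e^{\xi\tau-\eta\sigma}$, and the simple supremum bound $\sup_{t\in\Sigma_0\cup\Sigma_-}\Re\tau\leq 1$, $\Re\sigma=2$ gives $|q_k|\leq e^{\xi-2\eta}$, hence $e^{-\xi}$ on the diagonal for $\xi>0$. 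The $Q_k$-integral is then bounded by a $\xi$-independent constant times $\sqrt{N/(M+1)}$ (estimates \eqref{eq:est_I^loc_1_norm}--\eqref{eq:est_I_2_norm}, collected in \eqref{Destimate1}). The result is a \emph{linear} exponential tail bound $C\sqrt{N/(M+1)}\,e^{-\xi}$, which is weaker than Gaussian but fully sufficient for the claimed integral bound. For part \ref{enu:thm:normal_2}, rather than tracking $t_w$ across the gap and extracting "bump residues" as you suggest, the paper simply writes $\K_N(x,x)=\K^L_N(x,x)+\K^R_N(x,x)$, one piece built from contours anchored near $-k$ with the $s$-line $\mathcal{L}_{-k}$ just to its right, the other anchored near $1-k$ with the $s$-line $\mathcal{R}_{1-k}$ just to its left, and bounds each piece by the identical device; no residues are collected. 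Your proposal leaves the residue/steepest-descent split unverified (as you acknowledge), and since the central mechanism you propose cannot produce the decay, the missing verification is not a routine task but the core obstacle.
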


\begin{proof}[Proofs of part \ref{enu:thm:normal_1} of Theorem \ref{cor:normality} and Theorem \ref{thm:normal}]
  Define
  \begin{equation} \label{eq:defn_x_N(k)}
    x_N(k) = N(\psi(1 - k + N) - \log N),
  \end{equation}
  such that $F'(t; x_N(k)) = 0$ is solved by $t_{x_N(k)} = 1 - k$. We set out the contours for $s$ and $t$ variables in \eqref{transformK}. For $s$, we define two possible vertical contours passing beside $1-k$ as
  \begin{equation} \label{Line}
    \mathcal{L}_{1-k}=\{  1- k + 2\sqrt{N/(M + 1)} + iy \mid y \in \realR \}, \quad \mathcal{R}_{1-k}=\{  1- k - 2\sqrt{N/(M + 1)} + iy \mid y \in \realR \}.
  \end{equation}
  (Yes, $\mathcal{L}_{1 - k}$ is to the right of $\mathcal{R}_{1 - k}$.) In the proof of part \ref{enu:thm:normal_1} of Theorem \ref{cor:normality} and part \ref{enu:thm:normal_3} of Theorem \ref{thm:normal}, we choose $\mathcal{L}_{1 - k}$. Moreover, we define
  \begin{equation}
    \mathcal{L}^{\loc}_{1 - k} = \{ z \in \mathcal{L}_{1 - k} \mid \lvert \Im z \rvert \leq N^{5/8} (M+1)^{-3/8} \}, \quad \mathcal{L}^{\glob}_{1 - k} = \mathcal{L}_{1-k} \setminus \mathcal{L}^{\loc}_{1 - k}.
  \end{equation}
  For $t$, let  $\Sigma_0(1 - k)$ be the positively oriented circle centered at $1 - k$ with radius $\sqrt{N/(M + 1)}$, and then choose the contour $\Sigma$ as the union of $ \Sigma_0(1 - k)$ and $\Sigma_-(1/2 - k) \cup   \Sigma_+(3/2 - k)$ with $k = 1, 2, 3, \dotsc$, while $\Sigma_+(3/2 - k)$ is set to be empty if $k=1$; see Figure \ref{intcontour1}. Accordingly, we divide the $t$-integral into two parts and express the kernel in \eqref{transformK} as
  \begin{equation} \label{eq:normal_kernel_rescaled}
    \exp\! \bigg((k-1)(\xi-\eta)\sqrt{\frac{M+1}{N}}  \bigg) \, \K_N \bigg( \frac{M + 1}{N} g(k; \xi), \frac{M + 1}{N} g(k; \eta) \bigg) = I_1 + I_2,
  \end{equation}
  where   $I_1:= I^{\loc}_1 + I^{\glob}_1$ and
  \begin{equation} \label{I12}
    \begin{gathered}
      I^*_1= \int_{ \mathcal{L}^*_{1-k}} \frac{ds}{2\pi i} \oint_{\Sigma_0(1 - k)} \frac{dt}{2\pi i} \frac{Q_k(s,t) q_k(s, t; \xi, \eta)}{s - t}, \quad * = \text{$\loc$, $\glob$ or blank}, \\
      I_2= \int_{ \mathcal{L}_{1-k}} \frac{ds}{2\pi i} \oint_{ \Sigma_-(1/2 - k) \cup   \Sigma_+(3/2 - k)} \frac{dt}{2\pi i} \frac{Q_k(s,t) q_k(s, t; \xi, \eta)}{s - t},
    \end{gathered}
  \end{equation}
  such that
  \begin{equation}
    Q_k(s,t)= \frac{\exp \Big( (M + 1)F(t; x_N(k)) \Big)}{\exp \Big(  (M + 1)F(s; x_N(k)) \Big)} \frac{\Gamma(t)}{\Gamma(s)}, \quad q_k(s, t; \xi, \eta) = \frac{\exp((t+k-1)\xi\sqrt{\frac{M+1}{N}})}{\exp((s+k-1)\eta \sqrt{\frac{M+1}{N}})} = \frac{e^{\xi \tau}}{e^{\eta \sigma}},
  \end{equation}
  upon the change of variables
  \begin{equation} \label{eq:normal_change_variable}
    s = (1 - k) + \sigma\sqrt{\frac{N}{M + 1}}, \quad   t = (1 - k) + \tau\sqrt{\frac{N}{M + 1}}.
  \end{equation}
 So the remaining task is to obtain asymptotic estimates of $I_1 = I^{\loc}_1 + I^{\glob}_1$ and $I_2$ as $N\to \infty$ and $M/N \to \infty$. The key ingredient here is to analyze the properties of the function $Q_k(s, t)$.
Since it is not hard to find the estimate  of $\Gamma(t)/\Gamma(s)$ for  $t \in \Sigma$ and $s \in \mathcal{L}_{1 - k}$,
%there exists $C > 0$ such that
%  \begin{equation} \label{eq:est_gamma_norm}
%    \frac{\Gamma(t)}{\Gamma(s)} = \frac{\sigma}{\tau} \left( 1 + \bigO \left( \exp \left( C \sqrt{N/(M + 1)} (\lvert \sigma \rvert + \lvert \tau \rvert) \right) \right) \right),
%  \end{equation}
  we concentrate on the estimates of the functions $F(s; x_N(k))$ and $F(t; x_N(k))$ below.
 \begin{figure}[h]
      \centering
      \begin{tikzpicture}
        \draw (0, 0) -- (11, 0);
        \filldraw
          (5, 0) circle (1pt)
          (5, -0.2) node[below] {$\scriptscriptstyle 1-k$}
          (5.6, 0) circle (1pt)
          (6.0, 0) node[above] {$\scriptscriptstyle  \mathcal{L}_{1-k}%2\sqrt{\frac{N}{M + 1}
         $}
          (3, 0) circle (1pt)
          (3.2, 0) node[below] {$\scriptscriptstyle\frac{1}{2}-k$}
          (1.8, 0)  node[below] {$\cdots$}
          (0.8, 0) circle (1pt)
          (0.5, 0) node[below] {$\scriptscriptstyle\frac{1}{2}-N$}
          (9.6, 0) circle (1pt)
          (9.7, 0) node[below] {$\scriptscriptstyle 1$}
          (8.5, 0) node[below] {$\cdots$}
          (7, 0) circle (1pt)
          (6.8, 0) node[below] {$\scriptscriptstyle\frac{3}{2}-k$};
%        \draw (7, 1.85) node[below] {$$};

        \draw[->,>=stealth] (5.3, 0) arc (0:80:0.3);
        \draw (5.3, 0) arc (360:77:0.3);
        \draw (4.9, 0.2) node[above] {$\scriptscriptstyle\Sigma_{0}(1-k)$};
        \draw (0.8,1.5) -- (0.8,-1.5);
        \draw[->,>=stealth] (0.8,-1.5) -- (2.2,-1.5);
        \draw (2.2,-1.5) -- (3,0);
        \draw (3,0) -- (2.2,1.5);
        \draw[->,>=stealth] (2.2,1.5) -- (0.8,1.5);
        \draw (2.1, 1.5) node[right] {$\scriptscriptstyle\Sigma_{-}(\frac{1}{2}-k)$};
        \draw[->,>=stealth] (9.6,-1.5) -- (9.6,1.5);
        \draw (9.6,1.5) -- (8.1,1.5);
        \draw (8.1,1.5) -- (7,0);
        \draw[->,>=stealth] (7,0) -- (8.1,-1.5);
        \draw (8.1,-1.5) -- (9.6,-1.5);
        \draw (8.1, 1.5) node[left] {$\scriptscriptstyle\Sigma_{+}(\frac{3}{2}-k)$};
        \draw[->,>=stealth] (5.6, -2) -- (5.6, 1);
        \draw (5.6, 0.9) -- (5.6, 2);
    \end{tikzpicture}
    \caption{Schematic contours in the proof of Theorem \ref{cor:normality}}\label{intcontour1}
  \end{figure}

   When $t \in \Sigma_0(1 - k)$, under the change of variables in \eqref{eq:normal_change_variable}, by the Taylor expansion with respect to $t$ at $1 - k$, we have uniformly for all $\tau = \tau' N^{1/8}(M+1)^{1/8}$ where $\tau'$ is in a compact subset of $\compC$,
  \begin{multline} \label{eq:estimate_MF}
    (M + 1) F(t; x_N(k))=(M + 1)F(1-k; x_N(k))  - \frac{1}{2}N\psi'(1 - k+N) \tau^2 \\
    + C_{\tau'} (\tau')^3 N^{-\frac{1}{8}} (M + 1)^{-\frac{1}{8}},
  \end{multline}
  where $C_{\tau'}$ is bounded in $\compC$ and  by \eqref{digammaa}
  \begin{equation} \label{eq:new_label}
    N\psi'(1 - k+N) = 1+\bigO(1/N).
  \end{equation}
  When $s\in \mathcal{L}^{\loc}_{1-k}$, \eqref{eq:estimate_MF} is a good estimate for $F(s; x_N(k))$ with $t$ replaced by $s$. When $s \in \mathcal{L}^{\glob}_{1 - k}$, we have $s=1 - k + (2+iy)\sqrt{N/(M + 1)}$ with $|y| > N^{1/8}(M+1)^{1/8}$. Then for $N$ large enough we use the estimate \eqref{eq:estimate_F'} to find that there exists some $\epsilon > 0$ such that for all $|y| \geq  N^{1/8}(M+1)^{1/8}$,
  \begin{equation} \label{eq:estimate_vertical_normal}
    \Re (M + 1) F(s; x_{N}(k)) \geq   \Re (M + 1) F(s_{\pm}; x_{N}(k)) +
    \epsilon  N^{1/8}(M+1)^{1/8}
   (|y| - N^{1/8}(M+1)^{1/8}) ,
%    \begin{cases}
%      \Re (M + 1) F(1 - k + 2\sqrt{N/(M + 1)} + i N^{\frac{5}{8}} M^{-\frac{3}{8}}; \cdot) & \\
%      \phantom{\Re (M + 1)} + \epsilon N^{-\frac{3}{8}} M^{\frac{5}{8}} (y - N^{\frac{5}{8}} M^{-\frac{3}{8}}) & y \geq N^{\frac{5}{8}} M^{-\frac{3}{8}}, \\
%      \Re (M + 1) F(1 - k + 2\sqrt{N/(M + 1)} - i N^{\frac{5}{8}} M^{-\frac{3}{8}}; \cdot) & \\
%      \phantom{\Re (M + 1)} + \epsilon N^{-\frac{3}{8}} M^{\frac{5}{8}} (-y - N^{\frac{5}{8}} M^{-\frac{3}{8}}) & y \leq -N^{\frac{5}{8}} M^{-\frac{3}{8}}.
%    \end{cases}
  \end{equation}
  where
  \begin{equation}
    s_{\pm}=1 - k + (2\pm iN^{1/8}(M+1)^{1/8} )\sqrt{N/(M + 1)}.
  \end{equation}
  To see the derivation of \eqref{eq:estimate_vertical_normal}, we assume, without loss of generality, $y > N^{1/8} (M + 1)^{1/8}$, and have by \eqref{eq:estimate_F'}
  \begin{equation}
    \begin{split}
      & \Re F(s; x_{N}(k)) - \Re F(s_{\pm}; x_{N}(k)) \\
      = {}& \int^{y \sqrt{N/(M + 1)}}_{N^{5/8} (M + 1)^{-3/8}} \frac{d}{d\zeta} \Re F \left( 1 - k + 2\sqrt{\frac{N}{M + 1}} + i \zeta, x_N(k) \right) d\zeta \\
      = {}& \int^{y \sqrt{N/(M + 1)}}_{N^{5/8} (M + 1)^{-3/8}} \arctan(N^{-1} \zeta)\big(1 + C_{1 - k + 2\sqrt{N/(M + 1)}, \zeta, x_N(k)} N^{-1} \big) d\zeta,
    \end{split}
  \end{equation}
  and then a direct computation implies \eqref{eq:estimate_vertical_normal}. We further have by using \eqref{eq:estimate_MF} and \eqref{eq:new_label} that, as both $M \to \infty$ and $N \to \infty$,
   \begin{equation} \label{eq:estimate_vertical_normal-2}
   \Re (M + 1) \big(F(s_{\pm}; x_{N}(k)) -   F(1-k; x_{N}(k)) \big)= \frac{1}{2} N^{1/4}(M+1)^{1/4} (1+o(1)).
  \end{equation}

Under the change of variables \eqref{eq:normal_change_variable},  we have for $\sigma$ and $\tau$ in a compact subset of $\compC \setminus \{ 0 \}$, and $(M + 1)/N$ large enough,
  \begin{equation} \label{eq:est_Gamma}
    \frac{\Gamma(t)}{\Gamma(s)} = \frac{\sigma}{\tau} \Big( 1 + C_{\sigma,\tau} \big (| \sigma| +|\tau|\big) \sqrt{\frac{N}{M + 1}} \Big), \quad \frac{1}{s - t} = \frac{1}{\tau - \sigma} \sqrt{\frac{M + 1}{N}},
  \end{equation}
  where $C_{\sigma, \tau}$ is bounded in $\compC$. Thus by using \eqref{eq:estimate_MF} for the estimate of $F(t; x_N(k))$ and $F(s; x_N(k))$, using \eqref{eq:est_Gamma} for $\Gamma(t)/\Gamma(s)$ for small $\sigma, \tau$ and some other elementary estimates, we have
  \begin{multline} \label{eq:est_I^loc_1_norm}
    I^{\loc}_1 = \sqrt{\frac{N}{M + 1}} \int^{2 + iN^{1/8}(M + 1)^{1/8}}_{2 - iN^{1/8}(M + 1)^{1/8}} \frac{d\sigma}{2\pi i} \oint_{\lvert \tau \rvert = 1} \frac{d\tau}{2\pi i} \\
    \left( 1 + \bigO((N(M + 1))^{-1/8}) \right) \frac{e^{-\tau^2/2}}{e^{-\sigma^2/2}} \frac{\sigma}{\tau} \frac{e^{\xi \tau}}{e^{\eta \sigma}} \frac{1}{\sigma - \tau},
  \end{multline}
  where the factors $e^{-\tau^2/2}$ and $e^{-\sigma^2/2}$ come from \eqref{eq:estimate_MF} and \eqref{eq:new_label}, and by using \eqref{eq:estimate_vertical_normal-2} and some other elementary estimates, we have for some $\epsilon' > 0$
  \begin{multline} \label{eq:est_I^glob_1_norm}
    I^{\glob}_1 = \sqrt{\frac{N}{M + 1}} e^{-\frac{1}{2} N^{1/4}(M+1)^{1/4} (1+o(1))} \int_{\Re \sigma = 2,\ \lvert \Im \sigma \rvert > N^{1/8}(M + 1)^{1/8}} \frac{d\sigma}{2\pi i}   \oint_{\lvert \tau \rvert = 1} \frac{d\tau}{2\pi i}  \\
  \bigO \left(e^{-\epsilon'  N^{1/8}(M+1)^{1/8} (|\Im \sigma| - N^{1/8}(M+1)^{1/8})} \right) \frac{\Gamma(t)}{\Gamma(s)} \frac{e^{\xi \tau}}{e^{\eta \sigma}} \frac{1}{\sigma - \tau}.
  \end{multline}

  To estimate $I_{2}$ we need the following claim, whose proof is left in the end of this subsection.

  \begin{claim} \label{claim:1}
  There exists $\epsilon > 0$ such that the inequality
  \begin{equation} \label{eq:minor_est_normal}
    \Re (M + 1) \big( F(t; x_N(k)) - F(1-k; x_N(k))\big) \leq  -\epsilon \frac{M+1}{N} | t - (1 - k)|
  \end{equation}
  holds for all $t$ on $\Sigma_-(1/2 - k)$ and $\Sigma_+(3/2 - k)$.
  \end{claim}

  Together with the above claim, combing the estimates \eqref{eq:estimate_MF} for $s\in \mathcal{L}^{\loc}_{1-k}$, and \eqref{eq:estimate_vertical_normal} and \eqref{eq:estimate_vertical_normal-2} for $s \in \mathcal{L}^{\glob}_{1 - k}$,  noting that $\lvert s - t \rvert > 1/4$ when $(M+1)/N$ is large, we have for some $\epsilon, \epsilon' > 0$
  \begin{multline} \label{eq:est_I_2_norm}
    I_2 = \int_{ \mathcal{L}_{1-k}} \frac{ds}{2\pi i} \oint_{ \Sigma_-(1/2 - k) \cup   \Sigma_+(3/2 - k)} \frac{dt}{2\pi i}   \frac{e^{\xi \tau}}{e^{\eta \sigma}}\frac{\Gamma(t)}{\Gamma(s)}  \bigO \left( e^{-\epsilon \frac{M+1}{N} | t - (1 - k)|} \right) \\
    \times
    \left\{
      \begin{aligned}
        (1 + \bigO((N(M + 1))^{-1/8}) e^{-(s - 1 + k)^2(M + 1)N^{-1}/2}, & \quad \lvert \Im s \rvert < N^{5/8}(M + 1)^{-3/8}, \\
        e^{-\frac{1}{2} N^{1/4}(M+1)^{1/4}  - \epsilon'  N^{-3/8}(M+1)^{5/8} (|\Im s| - N^{5/8}(M+1)^{-3/8})}, & \quad \text{otherwise}.
      \end{aligned}
    \right\}
  \end{multline}
  Here the estimate of the integrand when $\lvert \Im s \rvert \geq N^{5/8}(M + 1)^{-3/8}$ is parallel to that in \eqref{eq:est_I^glob_1_norm}, and we note that all the $\bigO$ estimates above are from $Q_k(s, t)$ and are independent of $\xi, \eta$, and
    for $\xi,\eta$  in a compact set of $\realR$
   \begin{equation} \label{eq:key_right_most-0}
    \lvert q_k(s, t; \xi, \eta)  \rvert \leq   e^{C \sqrt{(M+1)/N} ( | t - (1 - k)|+|s - (1 - k)|)},
  \end{equation}
   for some $C > 0$.

  Now assume that $k = 1, 2, \dotsc$ and $\xi, \eta$ are in a compact subset of $\realR$. Using the standard steepest-descent technique on \eqref{eq:est_I^loc_1_norm}, \eqref{eq:est_I^glob_1_norm} and \eqref{eq:est_I_2_norm}, we have that $\lvert I^{\glob}_1 \rvert = o(\exp(- N^{1/4}(M+1)^{1/4}/4))$, $\lvert I_2 \rvert = o(\exp(-\epsilon N^{-1}(M+1)/4))$, and under the change of variables \eqref{eq:normal_change_variable}, as $N, M/N \to \infty$ %(with the help of \eqref{eq:est_gamma_norm})
  \begin{equation}
   I_1= \Big(1 + \bigO \Big(\sqrt{ \frac{N}{M + 1}} \Big) \Big)
   \sqrt{\frac{N}{M + 1}}
    \int^{2 + i\infty}_{2 - i\infty} \frac{d\sigma}{2\pi i} \oint_{\lvert \tau \rvert = 1} \frac{d\tau}{2\pi i} \frac{\exp(\sigma^2/2 - \eta\sigma)}{\exp(\tau^2/2 - \xi\tau)} \frac{1}{\sigma - \tau} \frac{\sigma}{\tau}, \label{I1asym}
  \end{equation}
  uniformly for $\xi, \eta$ in any compact subset of $\realR$. We see the integral $I_1$ concentrates on $I^{\loc}_{1 - k}$ and $I_2$ is negligible compared with $I_1$. Actually, it is easy to check that  the RHS   integral  equals     $e^{-\eta^2/2}/\sqrt{2\pi}$. Hence we prove part \ref{enu:thm:normal_1} of Theorem \ref{cor:normality}.

  Next, assume $k = 1$ and $\xi, \eta > 0$. Then as $s \in \mathcal{L}_0$ and $t \in \Sigma_0(0) \cup \Sigma_-(-1/2)$, we have
  \begin{equation} \label{eq:key_right_most}
    \lvert q_k(s, t; \xi, \eta) \rvert = |e^{\xi \tau-\eta \sigma} |\leq e^{\xi - 2\eta}, \quad \text{if $\xi, \eta > 0$}.
  \end{equation}
  Hence from \eqref{eq:est_I^loc_1_norm}, \eqref{eq:est_I^glob_1_norm} and \eqref{eq:est_I_2_norm}, noting that all the estimates are independent of $\xi, \eta$, we have that there exists some constant $C, \epsilon > 0$ such that
 \begin{equation} \label{Destimate1}
   |I^{\loc}_1| \leq  C \sqrt{ \frac{N}{M + 1}} e^{\xi-2\eta}, \quad \lvert I^{\glob}_1 \rvert \leq e^{-\epsilon N^{5/4}(M+1)^{1/4}}e^{\xi-2\eta}, \quad |I_2| \leq e^{- \epsilon \frac{M+1}{4N} } e^{\xi-2\eta},
  \end{equation}
  if $\xi, \eta >0$. Then by \eqref{eq:normal_kernel_rescaled} we conclude that there exist $C > 0$ such that for all $\xi,\eta > 0$
  \begin{equation} \label{eq:estimate_right_most}
    \left\lvert \K_N \bigg( \frac{M + 1}{N} g(k; \xi), \frac{M + 1}{N} g(k; \eta) \bigg) \right\rvert \leq C \sqrt{ \frac{N}{M + 1}} e^{\xi - 2\eta}.
  \end{equation}
  Under the change of variables \eqref{eq:change_of_v_norm} we obtain
  \begin{equation} \label{eq:integral_right_most_change}
    \int^{+\infty}_{\frac{M + 1}{N}g(1; C_0(\epsilon))} \K_N(x, x) dx = \sqrt{ \frac{M+1}{N}} \int^{+\infty}_{C_0(\epsilon)} \K_N\Big( \frac{M + 1}{N}g(1; \xi), \frac{M + 1}{N}g(1; \xi)\Big) d\xi \leq C e^{-C_0(\epsilon)}.
  \end{equation}
  Hence part \ref{enu:thm:normal_3} of Theorem \ref{thm:normal} is proved if $C_0(\epsilon) > \log(C/\epsilon)$.

  At last, we consider part \ref{enu:thm:normal_2} of Theorem \ref{thm:normal}. Suppose $x \in ((M + 1)N^{-1}g(k + 1; 0), (M + 1)N^{-1} g(k; 0))$ where $k = 1, 2, \dotsc$, then we can express
  \begin{equation} \label{eq:xi_and_xi'_together}
    x = \frac{M + 1}{N} g(k + 1; \xi) = \frac{M + 1}{N} g(k; \xi'),
  \end{equation}
  where $0<\xi,-\xi'  < \sqrt{(M + 1)/N}(x_N(k) - x_N(k + 1))$. We rewrite $\K_N(x, x) :=\K^{L}_N(x, x) + \K^{R}_N(x, x)$, where
  \begin{equation}
    \begin{split}
      K^L_N(x, x) = {}& \int_{\mathcal{L}_{-k}} \frac{ds}{2\pi i} \oint_{\Sigma_0(- k)\cup  \Sigma_-(-1/2 - k)} \frac{dt}{2\pi i} \frac{Q_{k + 1}(s,t) q_{k + 1}(s, t; \xi, \xi)}{s - t} \\
      K^R_N(x, x) = {}& \int_{\mathcal{R}_{1-k}} \frac{ds}{2\pi i} \oint_{\Sigma_0(1- k)\cup \Sigma_+(3/2 - k)} \frac{dt}{2\pi i} \frac{Q_k(s,t) q_k(s, t; \xi', \xi')}{s - t},
    \end{split}
  \end{equation}
  and $\Sigma_+(3/2 - k)$ is set to be empty if $k = 1$.

  Suppose $x > (M + 1)N^{-1}g(k + 1; 0)$ and then $\xi$ defined in \eqref{eq:xi_and_xi'_together} is positive. Then analogous to \eqref{eq:key_right_most}, we have $\lvert q_{k + 1}(s, t; \xi,  \xi) \rvert \leq e^{-\xi}$ when $s \in \mathcal{L}_{-k}$ and $t \in \Sigma_0(- k)\cup \Sigma_-(-1/2 - k)$, and analogous to \eqref{eq:estimate_right_most}, we have, for some $C > 0$,
  \begin{equation} \label{eq:estimate_left_between}
    \K^L_N \bigg( \frac{M + 1}{N} g(k+1; \xi), \frac{M + 1}{N} g(k+1; \xi) \bigg) \leq C \sqrt{ \frac{N}{M + 1}} e^{-\xi}.
  \end{equation}
  On the other hand, suppse $x < (M + 1)N^{-1}g(k; 0)$ and then $\xi'$ defined in \eqref{eq:xi_and_xi'_together} is negative. Symmetric to \eqref{eq:key_right_most}, we have $\lvert q_k(s, t; \xi,  \xi) \rvert \leq e^{\xi'}$ when $s \in \mathcal{R}_{1-k}$ and $t \in \Sigma_0(1- k)\cup \Sigma_+(3/2 - k)$, and symmetric to \eqref{eq:estimate_left_between}, we have, for some $C > 0$,
  \begin{equation}
    \K^R_N \bigg( \frac{M + 1}{N} g(k ; \xi'), \frac{M + 1}{N} g(k; \xi') \bigg) \leq C \sqrt{ \frac{N}{M + 1}} e^{\xi'}.
  \end{equation}
  Hence analogous to \eqref{eq:integral_right_most_change},
  \begin{equation}
    \begin{split}
      & \int^{\frac{M + 1}{N} g(k; -C_k(\epsilon))}_{\frac{M+1}{N} g(k + 1; C_k(\epsilon))} \K_N(x, x) dx \\
      = {}& \sqrt{\frac{M+1}{N}}
      \int^{   \sqrt{\frac{M+1}{N}} (x_N(k) - x_N(k + 1))   - C_k(\epsilon) }_{C_k(\epsilon)} \K^{L}_N \left( \frac{M + 1}{N} g(k + 1; \xi), \frac{M + 1}{N} g(k + 1; \xi) \right) d\xi
      \\
      & + \sqrt{\frac{M+1}{N}} \int^{-C_k(\epsilon)}_{  \sqrt{\frac{M+1}{N}}  (x_N(k + 1) - x_N(k)) + C_k(\epsilon)} \K^{R}_N \left( \frac{M + 1}{N} g(k; \xi'), \frac{M + 1}{N} g(k; \xi') \right) d\xi' \\
      \leq {}& 2C e^{-C_k(\epsilon)}.
    \end{split}
  \end{equation}
  Hence part \ref{enu:thm:normal_2} of Theorem  \ref{thm:normal} is proved if $C_k(\epsilon) > \log(2C/\epsilon)$.
\end{proof}

Now we continue to prove part \ref{largnormal} of  Theorem \ref{cor:normality}. We need a general result for point processes:
\begin{claim} \label{claim:2}
  If a point process has correlation functions $R^{(n)}(x_1, \dotsc, x_n)$ for $n = 1, 2, \dots$ (see e.g.~\eqref{eq:n-pt_corr_func}), then for any subset $E$, the probability $\Prob(E; n)$ that there are at least $n$ particles in $E$
  \begin{equation} \label{eq:general_prob_corr_ineq}
    \Prob(E; n) \leq \sum^{\infty}_{k = n} \binom{k}{n} \Prob(\text{there are exactly $k$ particles in $E$}) = \frac{1}{n!} \int_{E^n} R^{(n)}(x_1, \dotsc, x_n) d^nx.
  \end{equation}
\end{claim}

This result is not hard to see: The inequality is obvious, and the identity is from the definition of the $n$-point correlation function $R^{(n)}(x_1, \dotsc, x_n)$.

\begin{proof}[Proof of  part \ref{largnormal} of  Theorem \ref{cor:normality}]
  For notational simplicity, throughout this proof we denote $T=(M+1)/N$ and for $a < b$ let
  \begin{equation}
    I_{k}[a, b]= \left[T g(k; a),  Tg(k; b) \right].
  \end{equation}
  Then the probability $\Prob(I_{ k}[-C,C]; 2)$ that there are at least $2$ eigenvalues of $\log(\Pi^*_M \Pi_M)$ in the interval $I_{ k}[-C,C]$, satisfies the inequality that is a specialization of Claim \ref{claim:2}:
  \begin{equation} \label{eq:2pt_prob}
    \Prob(I_{ k}[-C,C]; 2) \leq \frac{1}{2} \int_{I_{ k}[-C,C]} \int_{I_{ k}[-C,C]}\widetilde{R}^{(2)}_N(x, y)dxdy,
  \end{equation}
  where $\widetilde{R}^{(2)}_N(x, y)$ is the $2$-point correlation function associated to kernel $\K$ in \eqref{transformK}. As $M, N \to \infty$ in the setting of Theorem \ref{cor:normality}, by part \ref{enu:thm:normal_1} of Theorem \ref{cor:normality}, the right-hand side of \eqref{eq:2pt_prob} converges to
  \begin{equation}
    \frac{1}{4\pi} \iint_{[-C, C]^2}
    \begin{vmatrix}
      e^{-\frac{1}{2}\xi^2} & e^{-\frac{1}{2}\eta^2} \\
      e^{-\frac{1}{2}\xi^2} & e^{-\frac{1}{2}\eta^2}
    \end{vmatrix}
     d\xi d\eta = 0.
  \end{equation}
  Hence we have that as $ N, M/N \to \infty$ in the setting of Theorem   \ref{cor:normality}, the probability that there are tow or more eigenvalues of $\log(\Pi^*_M \Pi_M)$ in $I_{ k}[-C,C]$ satisfies
  \begin{equation} \label{eq:2_pt_no}
    \lim_{N \to 0} \Prob( I_{ k}[-C,C]; 2) = 0.
  \end{equation}

  Now we consider the distribution of $\xi_k$, the $k$-th largest eigenvalue of $\log(\Pi^*_M \Pi_M)$. First, we consider the $k = 1$ case. For any $x \in \realR$, and any $\epsilon > 0$, we write $\Prob \left( \xi_1 \geq T g(1; x) \right) = P_1 + P_2$, where $P_1, P_2$ depend on $C > x$ as
  \begin{equation} \label{eq:P_1_P_2}
    P_1 = \Prob \left( \xi_1 \in I_{ 1}[x,C]\right), \quad P_2 = \Prob \left( \xi_1 > T g(1; C) \right),
  \end{equation}
  and note that
  \begin{equation}
    P_1 = \Prob \left(I_{ 1}[x,C]; 1 \right) - P_3,
  \end{equation}
  such that
  \begin{equation}
    P_3 = \Prob \left( \text{$\xi_1>Tg(1;C)$ and there is $\xi_j \in I_{ 1}[x,C]$ with $j > 1$} \right).
  \end{equation}
  To estimate $P_2$, we suppose that $C$ is large enough such that
  \begin{equation}
    \frac{1}{\sqrt{2\pi}} \int^{\infty}_C e^{-t^2/2}  dt <\frac{\epsilon}{2} \label{tail}
  \end{equation}
  and for large enough $N$, by part \ref{enu:thm:normal_3} of Theorem \ref{thm:normal},
  \begin{equation}
    \int^{\infty}_{Tg(1; C)} \K_N(x, x) dx < \frac{\epsilon}{4}.
  \end{equation}
  Hence by the $n = 1$ case of \eqref{eq:general_prob_corr_ineq}, for large enough $N$
  \begin{equation} \label{eq:est_right_to_infty}
    P_2 = \Prob \left( \left( T g(1; C), \infty \right); 1 \right) \leq \int^{\infty}_{T g(1; C)} \K_N(x, x) dx < \frac{\epsilon}{4}.
  \end{equation}
  Then by \eqref{eq:2_pt_no}, we have
  \begin{equation} \label{eq:P_3_P_2}
    \limsup_{N \to \infty} P_3 = \limsup_{N \to \infty} P_2 \leq \frac{\epsilon}{4}.
  \end{equation}
  Next, using \eqref{eq:2_pt_no} first and then part \ref{enu:thm:normal_1} of Theorem  \ref{cor:normality}, we have from \eqref{eq:general_prob_corr_ineq} that
  \begin{equation} \label{eq:bulk_norm_1}
    \lim_{N \to \infty} \Prob \left(  I_{ 1}[x,C]; 1 \right) = \lim_{N \to \infty} \int _{I_{ 1}[x,C]} \K_N(t, t) dt =  \frac{1}{\sqrt{2\pi}}\int^C_x e^{-\frac{t^2}{2}} dt.
  \end{equation}
  Thus we derive that
  \begin{equation} \label{eq:est_xi_1}
    \begin{split}
      & \limsup_{N \to \infty} \Big\lvert \Prob \left( \xi_1 \geq T g(1; x) \right) - \frac{1}{\sqrt{2\pi}}\int^{\infty}_x e^{-\frac{t^2}{2}}  dt \Big\rvert \\
      < {}& \limsup_{N \to \infty} \Big\lvert \Prob \left( \xi_1 \geq T g(1; x) \right) -  \frac{1}{\sqrt{2\pi}} \int^C_x e^{-\frac{t^2}{2}} dt \Big\rvert + \frac{\epsilon}{2}\leq \epsilon.
    \end{split}
  \end{equation}
  Since $\epsilon > 0$ is arbitrary and all results above hold   uniformly for $x$ in a compact set of $\realR$, we prove  part \ref{largnormal} of  Theorem \ref{cor:normality}
   for $k = 1$.

  Next, we consider $\xi_2$. For any $x \in \realR$ and any $\epsilon > 0$, analogous to \eqref{eq:P_1_P_2}, we write $\Prob \left( \xi_2 \geq T g(2; x) \right) = P_4 + P_5$, where $C$ is large enough and
  \begin{equation} \label{eq:P_4_P_5}
    P_4 = \Prob \left( \xi_2 \in  I_{2}[x,C]   \right), \quad P_5 = \Prob \left( \xi_2 > T g(2; C) \right).
  \end{equation}
  We denote $P_6, P_7, P_8$ and estimate them when $N$ is large enough as follows by parts \ref{enu:thm:normal_3} and \ref{enu:thm:normal_2} of Theorem \ref{thm:normal} and \eqref{eq:2_pt_no}
  \begin{align}
    P_6 := {}& \Prob \left( \xi_2 > T g(1; C) \right) \leq \Prob \left( \left( Tg(1; C), \infty \right); 1 \right) \leq \int^{\infty}_{T g(1; C)} \K_N(x, x) dx< \frac{\epsilon}{12}, \\
    P_7 := {} &\Prob \big( \xi_2 \in \left( T g(2; C), T g(1; -C) \right) \big) \leq {} \Prob \left( \left( T g(2; C), Tg(1; -C) \right); 1 \right) \notag\\
                \leq {}& \int^{T g(1; -C)}_{Tg(2; C)} \K_N(x, x) dx< \frac{\epsilon}{12},\\
    P_8 := {}& \Prob \left( \xi_2 \in I_{1}[-C,C] \right)= \Prob \left( \xi_2, \xi_1 \in I_{1}[-C,C]  \right) + \Prob \left( \xi_2 \in I_{1}[-C,C],   \xi_1 > T g(1; C) \right) \notag \\
    \leq {}& \Prob \left(  I_{1}[-C,C] ; 2 \right) + \Prob \left( \left( T g(1; C), \infty \right); 1 \right) \notag \\
    \leq {}& \Prob \left(  I_{1}[-C,C] ; 2 \right) + \int^{\infty}_{T g(1; C)} \K_N(x, x) dx< \frac{\epsilon}{12}.
  \end{align}
  Then we have
  \begin{equation} \label{eq:noncompact_xi_2}
    P_5 :=P_6 + P_7 + P_8<   \frac{\epsilon}{4}.
  \end{equation}

  On the other hand, note that
  \begin{equation}
    P_4 = \Prob \left( I_2[x,C] ; 1 \right) - P_9,
  \end{equation}
  such that
  \begin{equation}
    \begin{split}
      P_9 = {}& \Prob \left( \text{$\xi_2>T g(2;C)$ and there is $\xi_j \in  I_2[x,C]$ with $j > 2$} \right) +  \Prob \left( \xi_2<T g(1;x), \xi_1 \in   I_2[x,C]    \right) \\
      \leq {}& P_5 + \Prob(\xi_1 \in   I_2[x,C]).
    \end{split}
  \end{equation}
  From the limiting distribution result for $\xi_1$ that we just obtained, the probability that $\xi_1$ lies in $I_2[x,C]$ vanishes as $N \to \infty$. Hence analogous to \eqref{eq:P_3_P_2}
  \begin{equation}
    \limsup_{N \to \infty} P_9 \leq \limsup_{N \to \infty} P_5 \leq \frac{\epsilon}{4}.
  \end{equation}
  Then using \eqref{eq:2_pt_no} first and then part \ref{enu:thm:normal_1} of Theorem \ref{thm:normal}, we have analogous to \eqref{eq:bulk_norm_1}
  \begin{equation} \label{eq:compact_limit_xi_2}
    \lim_{N \to \infty} \Prob \left( I_2[x,C]; 1 \right) = \lim_{N \to \infty} \int_{I_2[x,C]}   \K_N(t, t) dt = \frac{1}{\sqrt{2\pi}}\int^C_x e^{-\frac{t^2}{2}} dt.
  \end{equation}
  At last,  when  $C$ is large enough such that \eqref{tail} is satisfied, we collect the results above and similar to \eqref{eq:est_xi_1}  derive
  \begin{equation}
    \limsup_{N \to \infty} \left\lvert \Prob \left( \xi_2 \geq T g(2; x) \right) -  \frac{1}{\sqrt{2\pi}} \int^{\infty}_x e^{-\frac{t^2}{2}} dt  \right\rvert < \epsilon,
  \end{equation}
  and by the arbitrariness of $\epsilon > 0$, we prove part \ref{largnormal} of  Theorem \ref{cor:normality} for $k = 2$.

  The $k = 3, 4, \dots$ cases can be proved similarly by induction, and we omit the details.
\end{proof}

\begin{proof}[Proof of Claim \ref{claim:1}]
 By \eqref{eq:F_derivatives} and \eqref{eq:digamma_series}, we have that uniformly for $t \in B(1 - k, 1 + k)$,
  \begin{equation}
    NF'''(t; x_N(k)) = N\sum^{\infty}_{n = 0} \frac{2}{(n + t + N)^3} = \bigO(N^{-1}).
  \end{equation}
  So as $t$ is on $\Sigma^1_-(1/2 - k) \cup \Sigma^2_-(1/2 - k)$ and $\Sigma_+(3/2 - k)$, $F(t; x_N(k))$ can be approximated by
  \begin{equation} N F(t; x_N(k)) =N F(1 - k; x_N(k)) - \frac{1}{2}N\psi'(1 - k+N)\big(t - (1 - k)\big)^2 + \bigO(N^{-1}) \end{equation}
   where $\lim_{N \to \infty} N\psi'(1 - k+N)= 1$, see \eqref{digammaa}.  Hence on this part of the contour, $\Re NF(t; x_N(k))$ decreases as $t$ moves away from $1 - k$, and we have
  \begin{equation} \label{eq:end_pts_deri}
     \frac{d}{dx} \Re NF(x \pm i/4; x_N(k)) \big\rvert_{x = -k} =1+o(1)> 0.
  \end{equation}

  Next, we have that for $t = x \pm i/4 \in \Sigma^3_-(1/2 - k) \cup \Sigma^4_-(1/2 - k)$,
  \begin{equation} \label{eq:est_deriv_hori_Sigma}
    \frac{d^2}{dx^2} \Re F(x \pm i/4; x_N(k)) = -\Re \psi'(x + N \pm i/4) = -\sum^{\infty}_{n = 0} \frac{(n + N + x)^2 - \frac{1}{16}}{((n + N + x)^2 + \frac{1}{16})^2} < 0.
  \end{equation}
  Together with the result \eqref{eq:end_pts_deri} on the right endpoints of $\Sigma^3_-(1/2 - k) \cup \Sigma^4_-(1/2 - k)$ and the left  endpoints of $\Sigma^3_+(3/2 - k) \cup \Sigma^4_+(3/2 - k)$, we conclude that on these horizontal contours, $\Re NF(t; x_N(k)) $ decreases as $t$ moves away from $1 - k$, and the decreasing rate is higher as $t$ is farther away.

  At last, for $t = -N + \frac{1}{2} + iy \in \Sigma^5_-$,  by \eqref{stirling} and  \eqref{digammaa} a  simple calculation shows   \begin{equation}\Re \big(F(t; x_N(k)) -F(1-k; x_N(k)) \big)= -(N+1-k)+ \bigO(1). \end{equation}
  %, which means that its value does not differ much from that at the two end points.

Collecting all the estimates, we derive the result in \eqref{eq:minor_est_normal}.
\end{proof}

\subsubsection{Proof of Theorem \ref{thm:crit}}

In our proof of Theorem \ref{thm:crit}, without loss of generality, we assume that $(M + 1)/N = \gamma$.

We need the  following estimates of $F(t; w)$ and $F(s; w')$   for $s \in \{ 1 + iy: y\in \realR \}$ and $t \in \Sigma_-(1/2)$. % (or $\Sigma_-(1/2)$)
First,  for a fixed $w \in \realR$, if $\lvert t \rvert < N^{1/4}$, noting the assumption $M/N\to \gamma$, we have, by the Taylor expansion of $F(t; w)$ (cf. equation \eqref{stirling}), that
\begin{equation} \label{eq:est_MF_central_crit}
  (M + 1)F(t; w) = (M + 1)(\log N - \psi(N) + w/N)t - \frac{1}{2} (M + 1)\psi'(N) t^2 + \bigO(N^{-1/4}),
\end{equation}
and we have
\begin{equation} \label{eq:est_coe_crit}
  (M + 1)(\log N - \psi(N) + w/N) = (w + \frac{1}{2})\gamma + \bigO(N^{-1}), \quad (M + 1)\psi'(N) = \gamma + \bigO(N^{-1}).
\end{equation}
So as  $s, t = \bigO(N^{1/4})$, we have uniformly  for $w,w'$  in a compact set of $\realR$ (similarly for $s, w'$)
\begin{equation} \label{eq:limit_crit}
  (M + 1)F(t; w) = -\frac{1}{2} \gamma t^2 + (w + \frac{1}{2})\gamma t + \bigO(N^{-1/4}).  %e^{-(M + 1)F(s; w')} = e^{\frac{\gamma}{2} s^2 - \gamma(w' + 1/2)s} (1 + \bigO(N^{-1/4})).
\end{equation}

Next we estimate the integrand when either $s$ or $t$ is not $\bigO(N^{1/4})$. For $s = 1 + iy$, by \eqref{eq:estimate_F'} we have similar to \eqref{eq:estimate_vertical_normal} that there exists $\epsilon > 0$ such that
\begin{equation} \label{eq:est_F_vert}
  \Re (M + 1)F(1 + iy; w') \geq
    \Re (M + 1) F(1 \pm i N^{\frac{1}{4}}) + \epsilon N^{\frac{1}{4}} (y - N^{\frac{1}{4}}), \quad \text{for $|y| \geq N^{\frac{1}{4}}$}.
\end{equation}
Particularly, it dominates $-\log \Gamma(1+iy)$  on the vertical contour since   $-\log|\Gamma(1+iy)|\sim \frac{1}{2}\pi |y|$ as $|y| \to \infty$ (cf. \cite[5.11.9]{Boisvert-Clark-Lozier-Olver10}).
% with any $c$ in a compact set of $\realR$,   we see  that $\exp[\Re(M + 1)F(s; x_N(k)) +\log |\Gamma(s)|]$ grows fast that ensures the integrability  if $|y| \geq  N^{1/8}(M+1)^{1/8}$ and  the exponential decay  at least like $\exp[- 0.5 N^{5/4}(M+1)^{1/4} (1+o(1))]$.

On the other hand, we have that for $t \in \Sigma_-(1/2) \setminus B(0, N^{1/4})$, or $t \in \Sigma_-(-1/2)$, like \eqref{eq:minor_est_normal}, we have for some $\epsilon > 0$
\begin{equation} \label{eq:est_exp_F_crit}
  e^{(M + 1)F(t; w)} \leq e^{-\epsilon \lvert t \rvert}.
\end{equation}
Its  proof is quite analogous to that  of \eqref{eq:minor_est_normal}. In fact,
 as $t$ moves  along $\Sigma^3_-(1/2)\cup \Sigma^4_-(1/2)$),  we can prove   that %\setminus B(0, N^{1/4})$ or $\Sigma^4_-(1/2) \setminus B(0, N^{1/4})$
  \begin{equation}
     \frac{d}{dx} \Re NF(x \pm i/4; w) \big\rvert_{x = -1} =1+o(1)> 0,  \quad  \frac{d^2}{dx^2} \Re F(x \pm i/4; w)<0,
  \end{equation}
%and
%  $\Re F(t; w)$ decreases monotonically . To see it, we check that on these horizontal contours, the second derivative
%  \begin{equation} \label{eq:est_deriv_hori_Sigma}
%    \frac{d^2}{dx^2} \Re F(x \pm i/4; w) = -\Re \psi'(x + N \pm i/4) = -\sum^{\infty}_{n = 0} \frac{(n + N + x)^2 - \frac{1}{16}}{((n + N + x)^2 + \frac{1}{16})^2} < 0,
%  \end{equation}
%  and at the rightmost ends of these horizontal contours, $\frac{d}{dx} \Re F(x \pm i/4; w) > 0$ by \eqref{eq:est_MF_central_crit}. So $\lvert \exp [(M + 1)F(t; w)] \rvert$ decreases monotonically on these horizontal contours to the left. At last,
and for $t \in \Sigma^5_-$
  \begin{equation}
    \Re F(t; w) = -N +\bigO(1).
  \end{equation}
 These estimates  guarantee     \eqref{eq:est_exp_F_crit}.

Also in this proof, we introduce the countour $\Sigma'_-(a) = \Sigma^1_-(a) \cup \Sigma^2_-(a) \cup \Sigma^{3, '}_-(a) \cup \Sigma^{4, '}_-(a)$, such that $\Sigma^1_-(a)$ and $\Sigma^2_-(a)$ are defined in \eqref{eq:defn_Sigma_-}, and
\begin{equation}
  \Sigma^{3, '}_-(a) = \{ -t + \frac{i}{4} \mid t \in [\frac{1}{2} - a, +\infty) \}, \quad \Sigma^{4, '}_-(a) = \{ t - \frac{i}{4} \mid t \in (-\infty, a - \frac{1}{2}] \}.
\end{equation}
We may specialize $\Sigma_{\infty}$ in the statement of Theorem \ref{thm:crit} as $\Sigma'_-(1/2)$.
\begin{proof}[Proof of part \ref{enu:thm:crit_1} of Theorem \ref{thm:crit}]
  With $w, w'$ in a compact subset of $\realR$,
  \begin{multline} \label{eq:alt_K_N_crit}
    \K_N\big((M + 1)(\log N + w/N), (M + 1)(\log N + w'/N)\big) = \\
    \int^{1 + i\infty}_{1 - i\infty} \frac{ds}{2\pi i} \oint_{\Sigma_-(1/2)} \frac{dt}{2\pi i} \frac{\exp \big[ (M + 1)F(t; w) \big]}{\exp \big[ (M + 1)F(s; w') \big]} \frac{\Gamma(t)}{\Gamma(s)} \frac{1}{s - t},
  \end{multline}
  where the contour $\Sigma_-(1/2)$ is defined by \eqref{eq:defn_Sigma_-}.
  By the estimates of $F(t; w)$ given above in \eqref{eq:est_MF_central_crit} and \eqref{eq:est_exp_F_crit}, we know that the integrand of the double contour integral \eqref{eq:alt_K_N_crit} vanishes fast as $t$ moves away from $0$ along $\Sigma_{-(1/2)}$, and by \eqref{eq:est_F_vert}, the integrand vanishes fast as $s$ moves away from $0$ along the line $\{z \in \mathbb{C}: \Re z = 1 \}$. These estimates, together with the bahaviour of $\Gamma(z)$ as $z \to \infty$, imply  that the double contour integral \eqref{eq:alt_K_N_crit} concentrates on the region $s, t \in B(0, N^{1/4})$, and by approximation \eqref{eq:limit_crit}, we have %($o(1)$ is with respect to $N$)
  \begin{multline} \label{eq:limit_kernel_crit}
   % \frac{\exp(MF(0; w'))}{\exp(MF(0; w))}
     \K_N \big((M + 1)(\log N + w/N), (M + 1)(\log N + w'/N)\big) = \\
   \left(1 + \bigO(N^{-1/4})\right) \int^{1 + iN^{1/4}}_{1 - iN^{1/4}} \frac{ds}{2\pi i} \oint_{\Sigma'_-(1/2) \cap B(0, N^{1/4})} \frac{dt}{2\pi i} \frac{e^{\frac{\gamma}{2} s^2 - \gamma(w' + 1/2)s}}{e^{\frac{\gamma}{2} t^2 - \gamma(w + 1/2)t}} \frac{\Gamma(t)}{\Gamma(s)}\frac{1}{s - t}.
  \end{multline}

  It is obvious that if we change $N^{1/4}$ to be $\infty$, \eqref{eq:limit_kernel_crit} also holds. At last, by changing $w, w'$ in the proof by $\gamma^{-1}\xi - 1/2$ and $\gamma^{-1}\eta - 1/2$ respectively, we prove part \ref{enu:thm:crit_1}.
\end{proof}

Part \ref{enu:thm:crit_2} of Theorem \ref{thm:crit} follows from a technical result:
\begin{lem} \label{lem:first_trace_norm}
  Let $C \in \realR$, $\mathbf{\K}_N$ and $\mathbf{K}_{\crit}$ be the integral operators on $L^2((C, +\infty))$ whose kernel are $\K_N(g(\xi), g(\eta))$  defined in Theorem \ref{thm:crit} and $K_{\crit}(\xi, \eta; \gamma)$ respectively. Then $\mathbf{\K}_N$ and $\mathbf{K}_{\crit}$ are trace class operators and as $N \to \infty$, $\mathbf{\K}_N \to \mathbf{K}_{\crit}$ in trace norm.
\end{lem}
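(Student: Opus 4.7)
The plan is to write each of $\mathbf{\K}_N$ and $\mathbf{K}_{\crit}$ as a composition of two Hilbert--Schmidt operators and to prove Hilbert--Schmidt convergence of each factor. The factorization rests on the Laplace identity $1/(s-t) = \int_0^\infty e^{-u(s-t)}\,du$, which is valid along the chosen contours because the $s$-line sits on $\Re s=1$ while the $t$-contour stays in $\{\Re z<1\}$. Substituting this into \eqref{transformK} and \eqref{softlimitK} writes
\[
\K_N(g(\xi),g(\eta)) = \int_0^\infty A_N(\xi+u)\,B_N(\eta+u)\,du, \qquad K_{\crit}(\xi,\eta;\gamma) = \int_0^\infty A(\xi+u)\,B(\eta+u)\,du,
\]
where $A_{\bullet}$ collects the $t$-contour integral and $B_{\bullet}$ the $s$-contour integral. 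This factors the operators as $\mathbf{\K}_N=\mathbf{A}_N\mathbf{B}_N$ and $\mathbf{K}_{\crit}=\mathbf{A}\mathbf{B}$ with $\mathbf{A}_{\bullet}\colon L^2(\realR_+)\to L^2((C,\infty))$ and $\mathbf{B}_{\bullet}\colon L^2((C,\infty))\to L^2(\realR_+)$.

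Next I would extend the steepest-descent estimates from the proof of part \ref{enu:thm:crit_1} of Theorem \ref{thm:crit} to control $A_N,B_N$ and their limits uniformly in the argument $\rho$ and in $N$. Saddle-point on the $s$-line gives $|B_N(\rho)|,|B(\rho)|\le C e^{-\rho^2/(2\gamma)}$ for large $\rho$, which is an $N$-uniform Hilbert--Schmidt bound on $\mathbf{B}_N$ and $\mathbf{B}$. The $A$-factor is more delicate: a residue evaluation yields $A(\rho)=\sum_{k\ge 0}\frac{(-1)^k}{k!}e^{-\gamma k^2/2-\rho k}\to 1$ as $\rho\to\infty$, so the naive $A$-factor is not Hilbert--Schmidt on the half-line. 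To cure this I pass to the shifted kernel $\widehat{K}_{\crit}$ from \eqref{M=Nhat} (and its prelimit analog), whose $t$-contour encircles $\{-t_0,-t_0-1,\ldots\}$ with $t_0>0$ defined by $\psi'(t_0)=\gamma$. The corresponding shifted factor decays as $\widehat{A}(\rho)\sim e^{-\gamma t_0^2/2-\rho t_0}$, restoring the missing Hilbert--Schmidt bound. The similarity \eqref{eq:two_crit_rel} and its prelimit analog translate Fredholm determinants and trace-class statements between $\mathbf{K}_{\crit}$ on $L^2((C,\infty))$ and $\widehat{\mathbf{K}}_{\crit}$ on $L^2((C-\gamma t_0,\infty))$, so Lemma \ref{lem:first_trace_norm} for the unhatted operators reduces to the same statement for the hatted ones, where the factorization is genuinely HS--HS.

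With both hatted factors Hilbert--Schmidt and uniformly bounded in $N$, the convergence $\widehat{\mathbf{A}}_N\to\widehat{\mathbf{A}}$ and $\widehat{\mathbf{B}}_N\to\widehat{\mathbf{B}}$ in Hilbert--Schmidt norm follows from the pointwise convergence already proved in part \ref{enu:thm:crit_1} of Theorem \ref{thm:crit}, combined with the global decay bounds, via dominated convergence. Trace-norm convergence of the product is then immediate from the standard estimate
\[
\|\mathbf{A}_N\mathbf{B}_N-\mathbf{A}\mathbf{B}\|_{\mathrm{tr}}\le \|\mathbf{A}_N-\mathbf{A}\|_{\mathrm{HS}}\|\mathbf{B}_N\|_{\mathrm{HS}}+\|\mathbf{A}\|_{\mathrm{HS}}\|\mathbf{B}_N-\mathbf{B}\|_{\mathrm{HS}}.
\]

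The main obstacle is the second step: obtaining the $N$-uniform exponential/Gaussian decay of $A_N(\rho)$ and $B_N(\rho)$ \emph{globally} in $\rho$, rather than only on a compact set as used in the proof of part \ref{enu:thm:crit_1} of Theorem \ref{thm:crit}. This forces one to track the $u$-dependent family of saddle points of $(M+1)F(\,\cdot\,;w+u)$ as $u$ ranges over $[0,\infty)$, and to verify that the deformed contours together with the ratios $(\Gamma(s+N)/\Gamma(t+N))^{M+1}$ continue to dominate the Gaussian exponents $\pm\gamma s^2/2$, $\mp\gamma t^2/2$ uniformly in $N$ and $u$. Once this uniform tail control is in hand, the remainder of the argument is routine.
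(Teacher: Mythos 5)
Your proposal and the paper use the same factorization via the Laplace identity and both aim at Hilbert--Schmidt convergence of the two factors; the difference lies in how each treats the fact that the $t$-contour factor (your $A$, the paper's $G$) tends to $1$ rather than $0$ as its argument grows, because $\Gamma(t)$ has residue $1$ at $t=0$. You cure this by conjugating: you pass to $\widehat K_{\crit}$ of \eqref{M=Nhat}, whose $t$-contour lies in $\{\Re t<-t_0\}$ with $t_0>0$, so the conjugated factor decays like $e^{-t_0\rho}$. The paper does not conjugate. It observes instead that the $t=0$ residue is common to $g_N$ and $g_\infty$, so it cancels in the \emph{difference} $G_N-G_{\infty,\gamma}$, which can therefore be deformed to $\Sigma_-(-1/2)$ where $\Re t\le-1/2$, yielding the decay $\lvert G_N-G_{\infty,\gamma}\rvert\le \mathrm{const}\cdot e^{-(x+r-C)/2}$ and hence Hilbert--Schmidt convergence of the differences.

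Your conjugation buys something the paper's cancellation device does not: it makes each factor individually Hilbert--Schmidt, and therefore the products genuinely trace class. Indeed, the residue expansion $G_{\infty,\gamma}(x,r)=\sum_{k\ge0}\frac{(-1)^k}{k!}e^{-\gamma k^2/2-(x+r-C)k}=1+\bigO(e^{-(x+r-C)})$ shows that the unconjugated $G$-factor is not square-integrable on $(C,\infty)^2$, so the paper's argument as written only controls the differences; some form of your conjugation is what actually underwrites the trace-class assertion in the lemma. One caveat on your reduction step: the similarity \eqref{eq:two_crit_rel} multiplies by the unbounded function $e^{t_0\xi}$, so it preserves Fredholm determinants and $n$-point correlations but \emph{not} trace norms of operators on $L^2((C,\infty))$. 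The precise formulation is that the \emph{conjugated} operators are trace class and converge in trace norm, which suffices because the Fredholm determinant is invariant under that conjugation. Finally, the $\rho$- and $N$-uniform tail bounds you flag as the main remaining obstacle are of the same type as the paper's global steepest-descent estimates (\eqref{eq:est_F_vert}, \eqref{eq:est_exp_F_crit}); neither route avoids that work.
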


Below we prove the trace norm convergence by a widely used strategy \cite{Reed-Simon80}: Let $*$ denote either a natural number or $\infty$. If integral operators $\mathbf{K}_*$, $\mathbf{G}_*$ and $\mathbf{H}_*$ on $L^2((C, +\infty))$ satisfies $\mathbf{K}_* = \mathbf{G}_* \mathbf{H}_*$, then $\mathbf{K}_*$ is trace norm class if both $\mathbf{G}_*$ and $\mathbf{H}_*$ are Hilbert-Schmidt operators \cite[Theorem VI 22(h)]{Reed-Simon80}, and furthermore,  $\mathbf{K}_n \to \mathbf{K}_{\infty}$ in trace norm if both $\mathbf{G}_n \to \mathbf{G}_{\infty}$ and $\mathbf{H}_n \to \mathbf{H}_{\infty}$ in Hilbert-Schmidt norm \cite[Exercise VI 28(c)]{Reed-Simon80}. More concretely, if $\mathbf{K}_*$, $\mathbf{G}_*$, $\mathbf{H}_*$ are represented by kernel functions $K_*(x, y)$, $G_*(x, r)$, $H_*(r, y)$ respectively, then the factorization $\mathbf{K}_* = \mathbf{G}_* \mathbf{H}_*$ is equivalent to
\begin{equation}
  K_*(x, y) = \int^{\infty}_C G_*(x, r) H_*(r, y) dr.
\end{equation}
$\mathbf{G}_*$, $\mathbf{H}_*$ are Hilbert-Schmidt operators if $\iint_{(C, \infty)^2} dx dy \lvert X_*(x, y) \rvert^2 < \infty$ where $X = G$ or $H$ \cite[Theorem VI 23]{Reed-Simon80}, and the two Hilbert-Schmidt convergences are equivalent to (ibid.)
\begin{equation}
  \lim_{n \to \infty} \iint_{(C, \infty)^2} dx dy \lvert X_n(x, y) - X_{\infty}(x, y) \rvert^2 = 0, \quad X = G \text{ or } H. % \lim_{n \to \infty} \iint_{[C, \infty)^2} dr dy \lvert H_n(r, y) - H_{\infty}(r, y) \rvert^2 = 0.
\end{equation}
\begin{proof}[Proof of Lemma \ref{lem:first_trace_norm}]
  We note that the kernels $\K_N(g(x), g(y))$ and $K_{\crit}(x, y; \gamma)$ can be written as
  \begin{equation} \label{eq:convolutions}
    \int^{\infty}_C G_N(x, r)H_N(r, y) dr \quad \text{and} \quad \int^{\infty}_C G_{\infty, \gamma}(x, r) H_{\infty, \gamma}(r, y) dr
  \end{equation}
  respectively, where
  \begin{align}
    G_N(x, r) = {}& \oint_{\Sigma_-(1/2)} g_N(t; x, r) \frac{dt}{2\pi i}, & g_N(t; x, r) = {}& e^{(M + 1)F(t; \frac{x}{\gamma} - \frac{1}{2}) + t(r - C)} \Gamma(t), \label{eq:G_N} \\
    G_{\infty, \gamma}(x, r) = {}& \oint_{\Sigma'_-(1/2)} g_{\infty}(t; x, r) \frac{dt}{2\pi i}, & g_{\infty}(t; x, r) = {}& e^{-\frac{\gamma t^2}{2} + (x + r - C)t} \Gamma(t) \label{eq:G_infty} \\
    H_N(r, y) = {}& \int^{1 + i\infty}_{1 - i\infty} h_N(s; r, y) \frac{ds}{2\pi i}, & h_N(s; r, y) = {}& \frac{e^{-(M + 1)F(s; \frac{y}{\gamma} - \frac{1}{2}) - s(r - C)}}{ \Gamma(s)}, \\
    H_{\infty, \gamma}(r, y) = {}& \int^{1 + i\infty}_{1 - i\infty} h_{\infty}(s; r, y) \frac{ds}{2\pi i}, & h_{\infty}(s; r, y) = {}& \frac{e^{\frac{\gamma s^2}{2} - (y + r - C)s}}{\Gamma(s)}.
  \end{align}
  Hence we only need to show
  \begin{align}
    \lim_{N \to \infty} \iint_{(C, \infty)^2} dx dr \left\lvert G_N(x, r) - G_{\infty, \gamma}(x, r) \right\rvert^2 = {}& 0, \label{eq:HS_norm_G} \\
    \lim_{N \to \infty} \iint_{(C, \infty)^2} dr dy \left\lvert H_N(r, y) - H_{\infty, \gamma}(r, y) \right\rvert^2 = {}& 0. \label{eq:HS_norm_H}
  \end{align}

  To prove \eqref{eq:HS_norm_G}, we divide the integral domain $(C, \infty)^2$ into two regions: $I_1(R) = \{ (x, r) \mid x \geq C,\ r \geq C, \text{ and } x + r \leq R \}$ and $I_2(R) = \{ (x, r) \mid x \geq C,\ r \geq C, \text{ and } x + r > R \}$, where $R > C$ is a constant.

  On the region $I_1(R)$, by the estimate \eqref{eq:limit_crit}, we have that
  \begin{equation}
    g_N(t; x, r) = g_{\infty}(t; x, r)(1 + \bigO(N^{-1/4}))
  \end{equation}
  on their common part of the contour $\Sigma_-(1/2) \cap B(0, N^{1/4}) = \Sigma'_-(1/2) \cap B(0, N^{1/4})$ as $M, N \to \infty$, where the $\bigO(N^{-1/4})$ term is uniform with respect to $(x, r) \in I_1$ and $t$ on the contour. On the other hand, on the remaining contour $\Sigma_-(1/2) \setminus B(0, N^{1/4})$, by estimate \eqref{eq:est_exp_F_crit}, the inegral in \eqref{eq:G_N} is negligible uniformly for $(x, r) \in I_1(R)$, and also it is straightforward to check that on the remaining contour $\Sigma'_-(1/2) \setminus B(0, N^{1/4})$, the integral in \eqref{eq:G_infty} is negligible uniformly for $(x, r) \in I_1(R)$. Hence we prove that on $I_1(R)$, $G_N(x, r) - G_{\infty, \gamma}(x, r)$ approaches $0$ uniformly.

  We note that both  the integrands in the definitions of $G_N(x, r)$ and $G_{\infty, \gamma}(x, r)$  have residue $1$ at $t = 0$, so we can deform the contours $\Sigma_-(1/2)$ and $\Sigma'_-(1/2)$ in the two definitions, and have
  \begin{equation} \label{eq:diff_G}
    G_N(x, r) - G_{\infty, \gamma}(x, r) = \oint_{\Sigma_-(-1/2)} g_N(t; x, r) \frac{dt}{2\pi i}  - \oint_{\Sigma'_-{(-1/2)}} g_{\infty}(t; x, r) \frac{dt}{2\pi i}.
  \end{equation}
  For all $t \in \Sigma_-(-1/2)$, by inequality \eqref{eq:est_exp_F_crit} and that $\lvert e^{(x + r - C)t} \rvert \leq e^{-(x + r - C)/2}$ if $x + r - C> 0$, we have that for $(x, y) \in I_2(R)$,
  \begin{equation} \label{eq:line_int_G_N}
    \lvert g_N(t; x, r) \rvert \leq e^{-\epsilon \lvert t \rvert} \cdot e^{-(x + r - C)/2} \cdot \lvert \Gamma(t) \rvert.
  \end{equation}
  Hence we have $\lvert G_N(x, r) \rvert \leq \text{constant} \cdot e^{-(x + r - C)/2}$. Analogously, we verify by similar method that $\lvert G_{\infty}(x, r) \rvert \leq \text{constant} \cdot e^{-(x + r - C)/2}$ for $(x, y) \in I_2(R)$. Hence by \eqref{eq:diff_G} we conclude that $\lvert G_N(x, r) - G_{\infty, \gamma}(x, r) \rvert < \text{constant} \cdot e^{-(x + r - C)/2}$ for $(x, r) \in I_2$.

  Using the arbitrariness of $R$, we prove \eqref{eq:HS_norm_G} by the dominated  convergence theorem.

  The convergence \eqref{eq:HS_norm_H} can be proved in the same way: Dividing the integral domain $[C, \infty)^2$ into $I_1(R)$ and $I_2(R)$, depending on $r + y \leq R$ or $> R$, for any $R > C$, then showing that $\lvert H_N(r, y) - H_{\infty, \gamma}(r, y) \rvert$ converges uniformly to $0$ in $I_1(R)$, and is bounded by $\text{constant} \cdot e^{-(r - C + y)}$ in $I_2(R)$ (here we use $e^{-(r - C + y)}$, because $\Re s = 1$ for all $s$ on the contour).  Then the convergence is proved. Technically it is simpler because we do not need to deform the integral contour. The details are omitted.
\end{proof}

\subsection{Proof of Theorem \ref{thm1.3}} \label{sect2.2}

The proofs of  parts \ref{enu:thm:sine}  and \ref{enu:thm:airy}  of Theorem \ref{thm1.3}   are similar to the proofs of \cite[Theorems 1.1 and 1.2]{Liu-Wang-Zhang14} where $M$ is assumed to a fixed integer as $N\to \infty$.    The main difference is that we need to choose an $M$-dependent spectral parameter $v_{M}(\theta)$  defined in  \eqref{parameter-vm}   and choose a different contour for $t$. In order to see  why the sine and Airy kernels still hold even when $M$ goes to infinity  but much more slowly than $N$, we need to do some more-refined analyses.

Before the proof, we set up some notation. Let
\begin{multline}
  f_{M, N}(t; \theta) = \frac{(M + 1)^2}{N} \log\Gamma \left( N + \frac{N}{M + 1} t \right) - \frac{M + 1}{N} \log\Gamma \left( \frac{N}{M + 1} t \right) \\
  - (M\log N + \log(M + 1) + v_M(\theta))t + C_{M, N},
\end{multline}
where the variable $t \in \compC$, the parameter $\theta \in [0, \pi)$,   $v_{M}(\theta)$ is defined by \eqref{parameter-vm} and
\begin{equation}
  C_{M,N} = (M + 1)^{2}\Big( 1 - \big( 1- \frac{1}{2N}\big) \log N -  \frac{1}{2N} \log(2\pi)\Big) +\frac{M + 1}{2N}\log\frac{M + 1}{N}.
\end{equation}
Also,  let
\begin{equation} \label{fm-infty}
  f_M(t; \theta) = (M + 1)^{2}(1 + \frac{t}{M + 1})\log(1 + \frac{t}{M + 1}) - t\log{t} - Mt - v_{M}(\theta)t.
\end{equation}
By Stirling's formula \eqref{stirling}, for $t$ in a compact subset of $\compC$ whose distance to $\realR_-$ is positive, we have that, as $N/M$ tends to infinity,
\begin{equation} \label{eq:est_f_MN_by_fm}
  f_{M,N}(t; \theta) = f_{M}(t; \theta) + \frac{M+1}{N}\varphi_{M,N}(t; \theta),
\end{equation}
where $\varphi_{M,N}(t; \theta)$ is uniformly bounded and analytic in certain compact subset involving the chosen contour below. We also note that
\begin{equation} \label{eq:deriv_f_MN}
  f'_{M, N}(t; \theta) = (M + 1) \psi(N + \frac{N}{M + 1}t) - \psi(\frac{N}{M + 1}t) - \big(M \log N + \log (M + 1) + v_{M}(\theta)\big),
\end{equation}
and by \eqref{digammaa} and \eqref{eq:est_f_MN_by_fm}, for $t$ in a compact subset of $\compC$ whose distance to $\realR_-$ is positive, as $N/M \to \infty$,
\begin{equation} \label{eq:est_f'_MN}
  f'_{M, N}(t;\theta) = f'_M(t; \theta) + \bigO(M/N),
  \quad f'_M(t; \theta) = (M + 1)\log(1 + t/(M + 1)) - \log t - v_{M}(\theta).
\end{equation}

Next we define for $\phi \in (-\pi, \pi)$
\begin{equation} \label{eq:defn_q_M_h_M}
  q_{M}(\phi) = (M + 1)\left( \frac{ \sin\phi}{\sin(1-\frac{1}{M + 1})\phi}e^{i\frac{\phi}{M + 1}}-1 \right), \quad \text{and} \quad h_{M}(\phi) = \re{q_{M}(\phi)},
\end{equation}
and note that $h_M$ maps $[0, \pi)$ bijectively to $(-M - 1, (M + 1)/M]$. Then we define the contours
\begin{equation} \label{eq:defn_Sigma_pm}
  \Sigma_+ = \{t = q_{M}(\phi) \mid \phi \in [0, \pi)\}, \quad \Sigma_- = \{t = q_{M}(-\phi) = \overline{q_{M}(\phi)} \mid \phi \in [0, \pi)\},
\end{equation}
and
\begin{equation} \label{eq:defn_h}
  \mathcal{C}_{\theta} = \{h_{M}(\theta) + iy \mid y \in \realR \}.
\end{equation}
We also note that as $M$ is large, for $\theta$ in a compact subset of $(-\pi, \pi)$,
\begin{align}
  v_M(\theta) = {}& v_{\infty}(\theta) + \bigO(M^{-1}), & v_{\infty}(\theta) = {}& \theta\cot\theta + \log \frac{\theta}{\sin\theta},\label{vtheta} \\
  q_M(\theta) = {}& q_{\infty}(\theta) +  \bigO(M^{-1}), & q_{\infty}(\theta) = {}& \frac{\theta}{\sin\theta} e^{i\theta}, \label{eq:q_M_q_infty}
\end{align}
and then for $t$ in a compact subset of $\compC$ that is away from $\realR_-$,
\begin{equation} \label{eq:approx_f_M}
  f_M(t; \theta) = f_{\infty}(t; \theta) + \bigO(M^{-1}), \quad f_{\infty}(t; \theta) = \frac{1}{2}t^2 - t(\log t - 1) - v_{\infty}(\theta)t.
\end{equation}

The proof of Theorem \ref{thm1.3} needs the following technical lemma, whose proof will be given in Section \ref{subsubsec:proof_lemma_saddle_t}:
\begin{lem} \label{saddle-t}
  Let $f_M(t; \theta)$ be defined in \eqref{fm-infty},  then we have the following:
  \begin{enumerate}
  \item \label{enu:saddle-t:1}
    Let  $\theta \in [0, \pi)$. As $t$ runs along $\Sigma_+$ (resp.~$\Sigma_-$), $\Re f_M(t; \theta)$ attains its unique minimum at the point $q_M(\theta)$ (resp.~$q_M(-\theta)$). Moreover, we have
    \begin{equation}
      \frac{d}{d\phi} \Re f_M(q_M(\phi); \theta)
      \begin{cases}
        < 0, & \phi \in (0, \theta), \\
        > 0, & \phi \in (\theta, \pi),
      \end{cases}
      \quad
      \frac{d}{d\phi} \Re f_M(\overline{q_M(\phi)}; \theta)
      \begin{cases}
        < 0, & \phi \in (0, \theta), \\
        > 0, & \phi \in (\theta, \pi).
      \end{cases}
    \end{equation}
  \item \label{enu:saddle-t:2}
    Let $\theta \in (0, \pi)$. As $s$ runs along $\mathcal{C}_{\theta}$, $\Re f_M(s; \theta)$ attains its global maximum at two points $q_M(\pm\theta)$. Moreover,
    \begin{equation}
      \frac{d}{dy} \Re f_M(\Re q_M(\pm \theta) + iy; \theta)
      \begin{cases}
        < 0, & y > \Im q_M(\theta), \\
        > 0, & y \in (0, \Im q_M(\theta)), \\
        < 0, & y \in (\Im q_M(-\theta), 0), \\
        > 0, & y < \Im q_M(-\theta).
      \end{cases}
    \end{equation}
  \item \label{enu:saddle-t:3}
    Let $\realR \ni x_0 > h_M(0) = 1 + M^{-1}$, and let $\tilde{\mathcal{C}} = \{ x_0 + iy \mid y \in \realR \}$. Then $\Re f_M(s; 0)$ attains its global maximum at $x_0$. Moreover,
    \begin{equation}
      \frac{d}{dy} \Re f_M(x_0 + iy; 0)
      \begin{cases}
        < 0, & y > 0, \\
        > 0, & y < 0.
      \end{cases}
    \end{equation}
  \end{enumerate}
\end{lem}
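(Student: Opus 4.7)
The heart of the argument is the explicit closed form
\begin{equation*}
  q_M(\phi) = \frac{(M+1)\sin(\phi/(M+1))}{\sin((1 - 1/(M+1))\phi)}\, e^{i\phi}, \qquad 1 + \frac{q_M(\phi)}{M+1} = \frac{\sin\phi}{\sin((1 - 1/(M+1))\phi)}\, e^{i\phi/(M+1)},
\end{equation*}
which follows from the elementary identity $\sin\phi \cdot e^{i\phi/(M+1)} - \sin((1 - \tfrac{1}{M+1})\phi) = \sin(\tfrac{\phi}{M+1})\, e^{i\phi}$. Taking principal logarithms and matching terms against the definition \eqref{parameter-vm} of $v_M$ yields the crucial real-valued identity $f'_M(q_M(\phi); \theta) = v_M(\phi) - v_M(\theta)$. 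In particular $q_M(\pm\theta)$ are the two simple saddle points of $f_M(\cdot;\theta)$, and part \ref{enu:saddle-t:1} follows immediately by differentiating along $\Sigma_+$:
\begin{equation*}
  \frac{d}{d\phi}\Re f_M(q_M(\phi);\theta) = \big(v_M(\phi) - v_M(\theta)\big)\, h'_M(\phi).
\end{equation*}
Both $v_M$ and $h_M$ are continuous and strictly decreasing on $[0,\pi)$ (the monotonicity of $v_M$ is stated right after \eqref{parameter-vm}; that of $h_M$ follows from its asserted bijectivity between $[0,\pi)$ and $(-M-1,(M+1)/M]$ after \eqref{eq:defn_q_M_h_M}), so the product has the asserted sign pattern. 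The $\Sigma_-$ case follows by complex conjugation.

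For part \ref{enu:saddle-t:2}, the plan is to characterize the zero set of the \emph{$\theta$-independent} harmonic function $G(s) := \Im f'_M(s;\theta) = (M+1)\arg(1 + s/(M+1)) - \arg s$ on the cut plane $\compC \setminus (-\infty, 0]$. The equation $G(s)=0$ is an exact equality of real numbers (both sides lie in $(-\pi,\pi)$ under the principal branch, so no $2\pi$-ambiguity enters). Writing $s = |s|e^{i\phi}$ and matching real and imaginary parts reproduces exactly the computation of the first paragraph and shows that the zero set in the cut plane is precisely $(0,\infty)\cup\Sigma_+\cup\Sigma_-$. By the bijectivity of $h_M$, the vertical line $\mathcal{C}_\theta$ meets $\Sigma_\pm$ only at $s=q_M(\pm\theta)$. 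A single test computation, $G(1+i\epsilon) = -\epsilon/(M+2) + \bigO(\epsilon^2) < 0$ for small $\epsilon>0$, fixes the sign of $G$ just above $(0,\infty)$ and below $\Sigma_+$; the sign in every other connected component of the complement of the zero set is then determined by the harmonic sign-flip across each zero curve and by the $\pm\pi$ jumps of $G$ across the branch cut $(-(M+1),0)$. Collating these signs yields the claimed four-piece chart for $-G(h_M(\theta)+iy) = \tfrac{d}{dy}\Re f_M(h_M(\theta)+iy;\theta)$ in both geometric regimes $h_M(\theta)>0$ and $h_M(\theta)<0$. Part \ref{enu:saddle-t:3} is the confluent specialization at $\theta=0$: for $x_0 > h_M(0) = 1 + M^{-1}$ the line $\{x_0+iy\}$ lies strictly to the right of $\Sigma_+\cup\Sigma_-$ and meets the zero set of $G$ only at $y=0$, and the sign on each half is fixed by the same test-point computation together with the conjugation symmetry $G(\bar s) = -G(s)$.

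The principal obstacle is the uniform sign bookkeeping in part \ref{enu:saddle-t:2} across the two regimes $h_M(\theta)>0$ and $h_M(\theta)<0$. When $h_M(\theta)>0$ the vertical line crosses the zero curve $(0,\infty)$ transversely, producing a genuine zero of $G$ at $y=0$ and an interior local minimum of $\Re f_M$ there; when $h_M(\theta)<0$ the point $y=0$ falls on the branch cut and $\tfrac{d}{dy}\Re f_M$ exhibits a $2\pi$-jump rather than a zero. The assertion is that in either case the sign chart alternates exactly as stated, with extrema only at $y = \pm\Im q_M(\pm\theta)$ and no spurious zeros of $G$ entering the region; verifying this reduces to tracking the total variation of $\arg$ along $\mathcal{C}_\theta$ and is the only step requiring a genuine case distinction.
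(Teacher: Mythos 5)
Your part (1) is the paper's argument. For parts (2) and (3) you take a genuinely different route: the paper argues \emph{locally} on $\mathcal{C}_\theta$, computing $\Re f''_M(x+iy;\theta)$ in closed form to show $\frac{d}{dy}\Re f_M(x+iy;\theta)$ is unimodal on $(0,\infty)$ (via a unique $y_0>0$ with $\Re f''_M=0$) and then using its two zeros at $y=0$ and $y=\Im q_M(\theta)$; you instead observe that $G=\Im f'_M$ is $\theta$-independent, characterize its entire zero set in the cut plane as $(0,\infty)\cup\Sigma_+\cup\Sigma_-$, and propagate a single test-point sign through the components of the complement. Your global approach buys uniformity over the two regimes $h_M(\theta)>0$ and $h_M(\theta)<0$: in the latter $\Re f''_M(x+iy;\theta)>0$ for every $y$, so the paper's $y_0$ does not exist, and $\frac{d}{dy}\Re f_M(x+iy;\theta)\to\pm\pi$ rather than $0$ as $y\to 0^{\pm}$ because of the branch cut, so the paper's proof as written does not actually cover $\Re q_M(\theta)<0$ (the conclusion survives there because $g'$ is monotone with a zero at $\Im q_M(\theta)$, but that is a different argument), while your sign chart absorbs the branch-cut jump cleanly, exactly as you flag in your final paragraph. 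What your method costs is some bookkeeping that your proposal asserts but does not carry out: (i) the unique-zero-per-ray fact pinning down the zero set, namely that for fixed $\phi\neq 0$ the map $r\mapsto(M+1)\arg(1+re^{i\phi}/(M+1))$ increases strictly from $0$ to $(M+1)\phi$ and so meets $\phi$ exactly once; (ii) that $\Sigma_{\pm}\setminus\{q_M(0)\}$ are \emph{simple} zero arcs of $G$, so the sign genuinely flips across them (this is $f''_M(q_M(\phi))\neq 0$ for $\phi\neq 0$), while $q_M(0)$ is a critical point where four arcs meet; and (iii) for part (3), that $1+i\epsilon$ and $x_0+i\epsilon$ with $x_0>q_M(0)$ lie in \emph{different} components, so the upper half of the line has $G>0$ after one further flip across $\Sigma_+$ (or by a direct test at $x_0+i\epsilon$), not by the same test point. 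These checks are routine, but they are the actual content of the sign-chart step and should be written out.
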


\subsubsection{Proof of part \ref{enu:thm:sine} of Theorem \ref{thm1.3}}
  In this subsubsection, $\theta \in (0, \pi)$ is a constant, and $f_M(t), f_{M, N}(t), \rho_{M, N}$ stand for $f_M(t; \theta), f_{M, N}(t; \theta), \rho_{M, N}(\theta)$ respectively.

  After change of variables $s, t \to sN/(M + 1),  tN/(M + 1)$,  we obtain

  \begin{equation} \label{eq:airy_kernel_rescaled}
    \K_{N}(g(\xi),g(\eta)) = \frac{N}{M + 1} \int_{c-i\infty}^{c + i\infty} \frac{ds}{2\pi i} \oint_{\Sigma_{(M + 1)/N}} \frac{dt}{2 \pi i} \frac{1}{s - t} e^{\frac{N}{M + 1}\big( f_{M,N}(s) - f_{M,N}(t)  \big)} e^{\frac{N}{M + 1} \frac{\xi t - \eta s}{\rho_{M, N}}},
  \end{equation}
  where  $\Sigma_{(M + 1)/N}$ is a positive contour enclosing $0, -(M + 1)/N, -2(M + 1)/N, \dotsc, -(N - 1)(M + 1)/N$, and $c$ is chosen to make the vertical contour away from $\Sigma_{(M + 1)/N}$.

  For the asymptotic analysis, we deform $\Sigma_{(M + 1)/N}$ into $\Sigma^1_+ \cup \Sigma^2_+ \cup \Sigma^3_+ \cup \Sigma^4 \cup \Sigma^1_- \cup \Sigma^2_- \cup \Sigma^3_- \cup \Sigma_L \cup \Sigma_R$ which is defined below. Recall mapping $h_{M}$ defined in \eqref{eq:defn_q_M_h_M}. Since $h_{M}$ is bijective, its inverse $h_{M}^{-1}: (-M - 1, (M + 1)/M] \to [0, \pi)$ is well defined. Let  $C>0$ (independent of $M, N$) be sufficiently large such that $\log C + v_{M}(\theta) > 0$ for all large enough $M$. Then with $\Sigma_{\pm}$ defined in \eqref{eq:defn_Sigma_pm}, a small enough $\delta>0$, we define (without loss of generality, we assume that $N h_M(\theta) \notin \intZ$)
  \begin{equation} \label{eq:defn_Sigama_case3}
    \begin{split}
      \Sigma^1_{\pm} = {}& \Big\{ t \in \Sigma_{\pm} \mid 0 \leq \pm\arg t \leq \theta - \delta \Big\},\\
      \Sigma^2_{\pm} = {}& \Big\{ t \in \Sigma_{\pm} \mid \pm \arg t > \theta + \delta \text{ and } \Re t \geq -C \Big\}, \\
      \Sigma^3_{\pm} = {}& \Big\{ t = x \pm i \im\{q_{M}(h_{M}^{-1}(-C))\} \mid x \in [-(M + 1) + \frac{M + 1}{2N}, -C] \Big\}, \\
      \Sigma^4 = {}& \Big\{t = -(M + 1) + \frac{M + 1}{2N} + iy \mid y \in [-\im q_{M}(h_{M}^{-1}(-C)), \im q_{M}(h_{M}^{-1}(-C))]\Big\}.
    \end{split}
  \end{equation}
  Let $\Sigma_L$ be the vertical line segment connecting the right ends of $\Sigma^2_{\pm}$, and $\Sigma_R$ be the vertical line segment connecting the left ends of $\Sigma^1_{\pm}$. On the other hand, we choose the contour for $s$ to be the vertical line $\mathcal{C}_{\theta}$ defined in  \eqref{eq:defn_h}.

  Hence $\Sigma_{(M + 1)/N}$ is the union of two separate closed contours $\Sigma^1_+ \cup \Sigma_R \cup \Sigma^1_-$ and $\Sigma^2_+ \cup \Sigma^3_+ \cup \Sigma^4 \cup \Sigma^3_- \cup \Sigma^2_- \cup \Sigma_L$; cf. Figure \ref{intcon-sine}. We assume they are both positively oriented. The contour for $s$ goes between these two closed contours. We also divide $\Sigma_{(M + 1)/N}$ into the ``outer'' part $\Sigma_{\out} = \Sigma^1_+ \cup \Sigma^2_+ \cup \Sigma^3_+ \cup \Sigma^4 \cup \Sigma^1_- \cup \Sigma^2_- \cup \Sigma^3_-$ and the ``inner'' part $\Sigma_{\inner} = \Sigma_L \cup \Sigma_R$.

  \begin{figure}[h]
      \centering
      \begin{tikzpicture}
       % draw the axis
        \draw (-6,0) -- (2,0);%node[below] {$x$}
%--------------------path in the lower half plane
        \draw[domain=-pi+0.8:-pi+0.96,->,>=stealth] plot({\x*(cos(\x r))*(sin(\x r))^(-1)},{\x});
        \draw[domain=-pi+0.95:-pi+1.12] plot({\x*(cos(\x r))*(sin(\x r))^(-1)},{\x});
        \draw (-1.6,-pi+0.8) node[above]{$\Sigma_{-}^{2}$};
        \draw[->,>=stealth] (-4.51, {-pi+.8}) -- (-3.5, {-pi+.8});
        \draw (-3.52, {-pi+.8}) -- ({-2.27}, {-pi+0.8});
        \draw (-3.5,-pi+0.65) node[above]{$\Sigma_{-}^{3}$};
%--------------------right curved line
        \draw[domain=-1:pi/3.3,->,>=stealth] plot({\x*(cos(\x r))*(sin(\x r))^(-1)},{\x});
        \draw (0.9,1.5) node[below]{$\Sigma_{+}^{1}$};
        \draw[domain=0.93:3*pi/5-0.15] plot({\x*(cos(\x r))*(sin(\x r))^(-1)},{\x});
        \draw (0.9,-1.6) node[above]{$\Sigma_{-}^{1}$};
        \draw[domain=-3*pi/5+0.15:-0.95,->,>=stealth] plot({\x*(cos(\x r))*(sin(\x r))^(-1)},{\x});
%--------------------path in the upper half plane
        \draw[domain=pi-1.12:pi-0.95,->,>=stealth] plot({\x*(cos(\x r))*(sin(\x r))^(-1)},{\x});
        \draw[domain= pi-0.96:pi-.8] plot({\x*(cos(\x r))*(sin(\x r))^(-1)},{\x});
        \draw (-1.6,pi-.9) node[below]{$\Sigma_{+}^{2}$};
        \draw[->,>=stealth] ({-2.27}, {pi-.8}) -- (-3.5, {pi-.8});
        \draw (-3.45, {pi-.8}) -- (-4.51, {pi-.8});
        \draw (-3.5,pi-.8) node[below]{$\Sigma_{+}^{3}$};
%----------------left most vertical bar
        \draw[->,>=stealth] (-4.5, {pi-.8}) -- (-4.5, {1});
        \draw[->,>=stealth] (-4.5, {1.05}) -- (-4.5, {-1.05});
        \draw (-4.5, {-1}) -- (-4.5, {-pi+.8});
        \draw (-4.5,0.5) node[left]{$\Sigma^{4}$};

%-----------------vertical path--------------------
        \draw[->,>=stealth] ({3*pi*(cos(deg(3*pi/5-0.165)))*((sin(deg(3*pi/5-0.165)))^(-1))/5}, {3*pi/5-0.15}) -- ({3*pi*(cos(deg(3*pi/5-0.165)))*((sin(deg(3*pi/5-0.165)))^(-1))/5}, -0.72);
        \draw ({3*pi*(cos(deg(3*pi/5-0.165)))*((sin(deg(3*pi/5-0.165)))^(-1))/5}, -0.7) -- ({3*pi*(cos(deg(3*pi/5-0.165)))*((sin(deg(3*pi/5-0.165)))^(-1))/5}, {-3*pi/5+0.15});
        \draw (-0.4,0.5) node[right]{$\Sigma_{R}$};
        \draw ({3*pi*(cos(deg(3*pi/5+0.165)))*((sin(deg(3*pi/5+0.165)))^(-1))/5}, {3*pi/5+0.15}) -- ({3*pi*(cos(deg(3*pi/5+0.165)))*((sin(deg(3*pi/5+0.165)))^(-1))/5}, 0.7);
        \draw[<-,>=stealth] ({3*pi*(cos(deg(3*pi/5+0.165)))*((sin(deg(3*pi/5+0.165)))^(-1))/5}, 0.72) -- ({3*pi*(cos(deg(3*pi/5+0.165)))*((sin(deg(3*pi/5+0.165)))^(-1))/5}, {-3*pi/5-0.15});
        \draw (-0.9,-0.5) node[left]{$\Sigma_{L}$};
%----------------------integral contour for $s$-----------------------------------
        \draw[->,>=stealth] ({3*pi*(cos(deg(3*pi/5)))*((sin(deg(3*pi/5)))^(-1))/5}, {-pi}) -- ({3*pi*(cos(deg(3*pi/5)))*((sin(deg(3*pi/5)))^(-1))/5}, {pi-.5});
        \draw ({3*pi*(cos(deg(3*pi/5)))*((sin(deg(3*pi/5)))^(-1))/5}, {pi-.52}) -- ({3*pi*(cos(deg(3*pi/5)))*((sin(deg(3*pi/5)))^(-1))/5}, {pi});
        \draw (0,pi-.7) node[left]{$\mathcal{C}_{\theta}$};
        \filldraw
%          (0, 0) circle (1pt)
%          (1, 0) circle (1pt)
          ({3*pi*(cos(deg(3*pi/5)))*((sin(deg(3*pi/5)))^(-1))/5}, {3*pi/5}) circle (1pt) node[right] {$t_{+}$}
          ({3*pi*(cos(deg(3*pi/5)))*((sin(deg(3*pi/5)))^(-1))/5}, {-3*pi/5}) circle (1pt) node[right] {$t_{-}$};
      \end{tikzpicture}
      \caption{Schematic contours in the proof of part \ref{enu:thm:sine} of Theorem \ref{thm1.3}.}\label{intcon-sine}
    \end{figure}

  Define
  \begin{align}
    I_1 = {}& \lim_{\delta \to 0} \int_{\mathcal{C}_{\theta}} \frac{ds}{2\pi i} \oint_{\Sigma_{\out}} \frac{dt}{2 \pi i} \frac{1}{s - t} e^{\frac{N}{M + 1}\big( f_{M,N}(s) - f_{M,N}(t)  \big)} e^{\frac{N}{M + 1} \frac{\xi t - \eta s}{\rho_{M, N}}}, \label{eq:defn_I_1} \\
    I_2 = {}& \lim_{\delta \to 0} \int_{\mathcal{C}_{\theta}} \frac{ds}{2\pi i} \oint_{\Sigma_{\inner}} \frac{dt}{2 \pi i} \frac{1}{s - t} e^{\frac{N}{M + 1}\big( f_{M,N}(s) - f_{M,N}(t)  \big)} e^{\frac{N}{M + 1} \frac{\xi t - \eta s}{\rho_{M, N}}}.
  \end{align}
Applying the residue theorem, we obtain
  \begin{equation} \label{asymp:sin-2}
    \begin{split}
      I_{2} = {}& -\int_{q_{M}(-\theta)}^{q_{M}(\theta)} \frac{ds}{2\pi i} e^{\frac{N}{M + 1} \frac{\xi - \eta}{\rho_{M, N}} s} \\
      = {}& - \rho_{M, N} \frac{M + 1}{N} e^{\frac{N}{M + 1} \frac{\xi - \eta}{\rho_{M, N}} \re q_{M}(\theta)} \frac{\sin \left( \frac{N}{M + 1} \frac{\xi - \eta}{\rho_{M, N}} \Im q_M(\theta) \right)}{\pi(\xi - \eta)} \\
      = {}& -\rho_{M, N} \frac{M + 1}{N} \exp\Big(\pi(\xi - \eta)\cot\theta + \bigO\big(M^{-1}\big)\Big) \frac{\sin(\pi(\xi - \eta))}{\pi(\xi - \eta)}.
    \end{split}
  \end{equation}
  Later we are going to prove the estimate
  \begin{equation} \label{asymp:sin-1}
    I_{1}  = \bigO((M/N)^{2/5}).
  \end{equation}

  Recalling \eqref{eq:form_rho}  and  noting  for $\theta\in (0,\pi)$  the approximation
   $\rho_{M, N} =  \frac{N}{\pi(M + 1)}(1 + \bigO(M^{-1}))$, we have  from \eqref{asymp:sin-2} and \eqref{asymp:sin-1}  that $I_2$ dominates $I_1$, and thus completes the proof of part \ref{enu:thm:sine}.

  The remaining part of the proof is on \eqref{asymp:sin-1} by steepest-descent analysis. We need to have some estimates of $\Re f_{M, N}(t)$ as $t \in \Sigma_{\out}$ and $\Re f_{M, N}(s)$ as $s \in \mathcal{C}_{\theta}$. First we have

  \begin{globalestA} \label{globalest_A}
  \begin{enumerate}[label=(A.\arabic*)]
  \item \label{enu:A_Sigma_3}
    % Estimate of $f_{M, N}(t)$ for $t \in \Sigma^3_{\pm}$:
    When $t \in \Sigma^3_{\pm}$, we can use the estimate \eqref{eq:est_f'_MN}.
    By the assumption on $C$ in \eqref{eq:defn_Sigama_case3}, we have $\Re{\log t} + v_{M}(\theta) > 0$ for all $t \in \Sigma^3_{\pm}$, and also $\lvert 1 + t/(M + 1) \rvert < 1$ for all $t \in \Sigma^3_{\pm}$. Hence $\Re f'_{M, N}(t) < 0$ for all $t \in \Sigma^3_{\pm}$ and we conclude that over $\Sigma^3_{\pm}$, $\Re f'_{M, N}(t)$ attains its maximum at the two right end points $-C \pm ih_{M}^{-1}(-C)$.
  \item \label{enu:A_Sigma_4}
    % Estimate of $f_{M, N}(t)$ for $t \in \Sigma^4$:
    When $t \in \Sigma^4$, we consider the imaginary part of $f'_{M, N}(t)$ by \eqref{eq:deriv_f_MN}. We denote $\sigma = N + \frac{N}{M + 1}t$, so that $\Re \sigma = \frac{1}{2}$ as $t \in \Sigma^4$.
    \begin{equation}
      \Im f'_{M, N}(t) = (M + 1) \Im\psi(\sigma) - \Im\psi(\sigma - N) = -M \left( \sum^{\infty}_{n = 0} \Im \frac{1}{\sigma + n} \right) + \sum^{N}_{n = 1} \Im \frac{1}{\sigma - 1},
    \end{equation}
    where we use \eqref{eq:digamma_series}. Now it is straightforward to see that as $t \in \Sigma^4$, $\Im f'_{M, N}(t) > 0$ if $\Im t > 0$ and $\Im f'_{M, N}(t) < 0$ if $\Im t < 0$. Hence we have that the maximum of $\Re f_{M, N}(t)$ over $\Sigma^4$ is attained at $t = -(M + 1) + \frac{M + 1}{2N}$. Furthermore, by direct computation, we have $\Re f_{M, N}(-(M + 1) + \frac{M + 1}{2N}) < \Re f_{M, N}(q_M(\pm\theta)) - \epsilon$ for some $\epsilon > 0$.
  \item \label{enu:A_Sigma_12}
    % Estimate of $f_{M, N}(t)$ for $t \in \Sigma^1_{\pm} \cup \Sigma^2_{\pm}$:
    When $t \in \Sigma^1_{\pm} \cup \Sigma^2_{\pm}$, we assume that $\delta = 0$ in the definition \eqref{eq:defn_Sigama_case3}. We use the approximation \eqref{eq:est_f_MN_by_fm}, and can uniformly approximate $f_{M, N}(t)$ by $f_{M}(t)$. Then by part \ref{enu:saddle-t:1} of Lemma \ref{saddle-t}, we have that $f_{M}(t)$ attains its unique minimum at $\Sigma^1_+ \cup \Sigma^2_+$ at $q_{M}(\theta)$, and attains its unique minimum at $\Sigma^1_- \cup \Sigma^2_-$ at $q_{M}(-\theta)$. We note that the two minimal values are the same.
  \item \label{enu:A_C}
    % Estimate of $f_{M, N}(s)$ for $s \in \mathcal{C}_{\theta}$:
    We consider the infinite long contour $\mathcal{C}_{\theta}$ in two parts, one is the finite part $\mathcal{C}^1_{\theta} = \{ s \in \mathcal{C}_{\theta} \mid \lvert \Im s \rvert \leq K \}$ and $\mathcal{C}^2_{\theta} = \mathcal{C}_{\theta} \setminus \mathcal{C}^1_{\theta}$, where $K$ is a large positive constant with $K > h_{M}^{-1}(-C)$. For $s \in \mathcal{C}^1_{\theta}$, by \eqref{eq:est_f_MN_by_fm} $f_{M, N}(s)$ can be approximated by $f_{M}(s)$, and then by  part \ref{enu:saddle-t:2} of Lemma \ref{saddle-t}, $f_{M}(s)$ attains its maximal value at two points $q_{M}(\theta)$ and $q_{M}(-\theta)$. For $s \in \mathcal{C}^2_{\theta}$, we find that the approximation \eqref{eq:est_f'_MN} holds, and it is straightforward to check that if $K$ is large enough, $\Im f'_{M, N}(s) > 1$ if $s \in \mathcal{C}^2_{\theta} \cap \compC_+$ and $\Im f'_{M, N}(s) < -1$ if $s \in \mathcal{C}^2_{\theta} \cap \compC_-$. Hence $f_{M, N}(s)$ decreases at least linearly fast as $s$ moves to $\pm i\infty$ along $\mathcal{C}^2_{\theta}$.
  \end{enumerate}
  \end{globalestA}

  Next we estimate $\Re f_{M, N}(t)$ and $\Re f_{M, N}(s)$ ``locally'' around $q_{M}(\pm\theta)$. To this end, we divide $\mathcal{C}_{\theta}$ into $\mathcal{C}_{\loc, +} \cup \mathcal{C}_{\loc, -} \cup \mathcal{C}_{\glob}$, and divide $\Sigma_{\out}$ into $\Sigma_{\loc, +} \cup \Sigma_{\loc, -} \cup \Sigma_{\glob}$, such that
  \begin{equation} \label{eq:local_contours_sine}
    \begin{aligned}
      \mathcal{C}_{\loc, \pm} = {}& \mathcal{C}_{\theta} \cap B\big(q_{M}(\pm\theta), ((M+1)/N)^{2/5}\big), & \mathcal{C}_{\glob} = {}& \mathcal{C}_{\theta} \setminus (\mathcal{C}_{\loc, +} \cup \mathcal{C}_{\loc, -}), \\
      \Sigma_{\loc, \pm} = {}& \Sigma_{\out} \cap B\big(q_{M}(\pm\theta), ((M+1)/N)^{2/5}\big), & \Sigma_{\glob} = {}& \Sigma_{\out} \setminus (\Sigma_{\loc, +} \cup \Sigma_{\loc, -}).
    \end{aligned}
  \end{equation}
  We also define $\Sigma^0_{\loc, \pm} = \Sigma_{\pm} \cap B\big(q_{M}(\pm\theta), ((M+1)/N)^{2/5}\big)$.

  Around $q_{M}(\pm\theta)$, $f_{M, N}(t)$ can be approximated by $f_{M}(t)$ as in \eqref{eq:est_f_MN_by_fm}. From \eqref{eq:est_f'_MN} we check by direct computation %(cf.~\cite[Equation (2.11)]{Liu-Wang-Zhang14})
  \begin{equation} \label{eq:derivative_f_M}
    f'_M(q_M(\pm\theta)) = 0.
  \end{equation}
  To have some intuition of the identity before going into the calculation, we note that as $M \to \infty$, this identity converges to $f'_{\infty}(q_{\infty}(\pm\theta)) = 0$, which is much easier to verify. So in $B\big(q_{M}(\pm\theta), ((M+1)/N)^{2/5}\big)$,
  \begin{equation} \label{eq:f_MN_est}
    f_{M, N}(t) = f_{M, N}(q_M(\pm\theta)) + \frac{f''_M(q_M(\pm\theta))}{2}\big(t - q_M(\pm\theta)\big)^2 + \bigO\big((M/N)^{\frac{6}{5}}\big).
  \end{equation}
  Then we have the following lemma:
  \begin{lem} \label{lem:est_local}
    There exists $\epsilon > 0$, such that $s \in \mathcal{C}_{\loc, \pm}$ and $t \in \Sigma^0_{\loc, \pm}$, for all $M$ large enough,
    \begin{align}
      \Re\big(f_M(s) - f_M(q_M(\pm\theta))\big) \leq {}& -\epsilon \lvert s - q_M(\pm\theta) \rvert^2, \label{eq:f_M_along_C} \\
      \Re\big(f_M(t) - f_M(q_M(\pm\theta))\big) \geq {}& \epsilon \lvert t - q_M(\pm\theta) \rvert^2, \label{eq:f_M_along_Sigma}
    \end{align}
    where all $\pm$ are the same.
  \end{lem}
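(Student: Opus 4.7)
My plan is to prove both inequalities by Taylor-expanding $f_M$ about the critical point $q_M(\pm\theta)$ and analyzing the quadratic form along the relevant tangent direction. Since $f'_M(q_M(\pm\theta)) = 0$ by \eqref{eq:derivative_f_M},
\begin{equation*}
f_M(z) - f_M(q_M(\pm\theta)) = \tfrac{1}{2}A_M^{\pm}(z - q_M(\pm\theta))^2 + \bigO(|z - q_M(\pm\theta)|^3), \qquad A_M^{\pm} := f''_M(q_M(\pm\theta)),
\end{equation*}
uniformly for $M$ large. On the ball of radius $((M+1)/N)^{2/5}$ the cubic remainder is $o(|z-q_M(\pm\theta)|^2)$, so the lemma reduces to showing that $\Re[A_M^{\pm}\tau^2]$ is \emph{uniformly} bounded away from zero with the correct sign, where $\tau$ is the unit tangent to $\mathcal{C}_\theta$ (must give a strictly negative value) or to $\Sigma_{\pm}$ (must give a strictly positive value) at the critical point.

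For the uniformity I pass to the limit $M\to\infty$ via \eqref{eq:approx_f_M}: $q_M(\pm\theta)\to q_\infty(\pm\theta)=(\theta/\sin\theta)e^{\pm i\theta}$ and $A_M^{\pm}\to A_\infty^{\pm}=1-(\sin\theta/\theta)e^{\mp i\theta}$. For $\theta\in(0,\pi)$ the real part $1-\sin(2\theta)/(2\theta)$ is strictly positive (since $\sin x<x$ for $x>0$) and the imaginary part $\pm\sin^2\theta/\theta$ is strictly nonzero; in particular $A_\infty^{\pm}\neq 0$, so $q_\infty(\pm\theta)$ is a \emph{non-degenerate} saddle of $f_\infty$. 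Along $\mathcal{C}_\theta$, whose tangent is $i$,
\begin{equation*}
\Re[A_\infty^{\pm} i^2] = -1 + \frac{\sin(2\theta)}{2\theta} < 0,
\end{equation*}
so by continuity $\Re[A_M^{\pm} i^2] \leq -2\epsilon_0$ for some $\epsilon_0>0$ and all $M$ large. Parametrizing $s\in\mathcal{C}_{\loc,\pm}$ as $s=q_M(\pm\theta)+iy$ and inserting into the Taylor expansion yields \eqref{eq:f_M_along_C}.

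For \eqref{eq:f_M_along_Sigma} I parametrize $t=q_M(\phi)$ with $\phi$ near $\pm\theta$; the chain rule gives
\begin{equation*}
\tfrac{d^2}{d\phi^2}\Re f_M(q_M(\phi))\big|_{\phi=\pm\theta} = \Re\bigl[A_M^{\pm}(q'_M(\pm\theta))^2\bigr],
\end{equation*}
which is $\geq 0$ by Lemma \ref{saddle-t}(\ref{enu:saddle-t:1}). To promote this to a strict and uniform positive bound I use a transversality argument at the non-degenerate saddle $q_\infty(\pm\theta)$: the zero set of $\Re[A_\infty^{\pm}(z-q_\infty(\pm\theta))^2]$ consists of two lines dividing a neighborhood into four sectors of alternating sign. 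The previous step places the tangent of $\mathcal{C}_\theta$ strictly inside a ``descent'' sector. A short calculation from $q_\infty(\phi)=\phi e^{i\phi}/\sin\phi$ shows $q'_\infty(\pm\theta)$ is not a real multiple of $i$ (equality would force $\sin(2\theta)=2\theta$), so the limiting tangent to $\Sigma_{\pm}$ is transversal to that of $\mathcal{C}_\theta$ and, by the limiting form of Lemma \ref{saddle-t}(\ref{enu:saddle-t:1}), lies in an ``ascent'' sector; hence $\Re[A_\infty^{\pm}(q'_\infty(\pm\theta))^2]>0$ strictly. By continuity $\Re[A_M^{\pm}(q'_M(\pm\theta))^2]\geq 2\epsilon_1>0$ for $M$ large, and combined with $t-q_M(\pm\theta)=q'_M(\pm\theta)(\phi\mp\theta)(1+\bigO(\phi\mp\theta))$ this delivers \eqref{eq:f_M_along_Sigma}. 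The main obstacle is precisely this promotion of the possibly degenerate minimum from Lemma \ref{saddle-t}(\ref{enu:saddle-t:1}) to a uniform quadratic bound; the non-degeneracy of the limiting saddle (guaranteed by $A_\infty^{\pm}\neq 0$) together with the transversality of the two contours is the essential ingredient that bypasses any explicit, algebraically messy, computation of $q'_\infty(\pm\theta)$ and its phase.
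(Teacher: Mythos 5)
Your proof of \eqref{eq:f_M_along_C} follows the paper's argument: reduce to the sign of $\Re f''_M(q_M(\pm\theta)) = \Re A_\infty^{\pm} + \bigO(M^{-1})$, where $\Re A_\infty^{\pm} = 1-\sin(2\theta)/(2\theta) > 0$. But your proof of \eqref{eq:f_M_along_Sigma} has a genuine gap. Write $A_\infty^{+} = \lvert A_\infty^{+}\rvert e^{i\alpha}$, so the quadratic form $Q(\tau) := \Re[A_\infty^{+}\tau^2] = \lvert A_\infty^{+}\rvert\,\lvert\tau\rvert^2\cos(\alpha + 2\arg\tau)$ vanishes on the two perpendicular lines $\arg\tau \in \{(\pi/2-\alpha)/2,\ (\pi/2-\alpha)/2 + \pi/2\} \pmod{\pi}$. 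Your two pieces of information are (i) $Q(i) < 0$, i.e.\ $i$ lies strictly inside a descent sector, and (ii) $q'_\infty(\theta) \notin i\realR$. Neither of these rules out $q'_\infty(\theta)$ lying \emph{on} a zero line: since $i$ is strictly interior to a sector, the zero lines are never $i\realR$ anyway, so being transversal to $i\realR$ is compatible with lying on a zero line. And the limiting form of Lemma~\ref{saddle-t}(\ref{enu:saddle-t:1}) gives only a local \emph{minimum} of $\Re f_M$ along $\Sigma_\pm$, which yields $Q(q'_\infty(\pm\theta)) \geq 0$ in the limit but by itself cannot exclude equality (the minimum could be of fourth order). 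Hence the step ``lies in an ascent sector; hence $\Re[A_\infty^{\pm}(q'_\infty(\pm\theta))^2] > 0$ strictly'' does not follow.

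Some computation is unavoidable here. The paper proves the strict inequality directly via the identity
\begin{equation*}
  \Re\!\left[\left(\tfrac{-\theta}{\sin^2\theta} + \tfrac{e^{i\theta}}{\sin\theta}\right)^{\!2}\!\left(1 - \tfrac{\sin\theta}{\theta}e^{-i\theta}\right)\right]
  = \frac{1}{\sin^2\theta}\!\left[\left(\tfrac{\theta}{\sin\theta} - \cos\theta\right)^{\!2} + \sin^2\theta\right] > 0,
\end{equation*}
uniformly on compact subsets of $\theta\in(0,\pi)$, and continuity in $M$ then gives the uniform $\epsilon$. An alternative that avoids expanding the square is to differentiate the factorization $\frac{d}{d\phi} f_M(q_M(\phi);\theta) = (v_M(\phi) - v_M(\theta))\,q'_M(\phi)$ from the proof of Lemma~\ref{saddle-t} once more and evaluate at $\phi = \theta$: since $f'_M(q_M(\theta)) = 0$ this gives $\Re[f''_M(q_M(\theta))(q'_M(\theta))^2] = v'_M(\theta)\,\Re q'_M(\theta)$, a product of two quantities that are strictly negative uniformly for large $M$ (one has $v'_\infty(\theta) = (\sin^2\theta-\theta^2)/(\theta\sin^2\theta) < 0$ and $\Re q'_\infty(\theta) = (\sin(2\theta)/2 - \theta)/\sin^2\theta < 0$). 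Either route replaces the sector/transversality argument, which cannot by itself decide on which side of a zero line $q'_\infty(\pm\theta)$ sits.
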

  \begin{proof}
    We prove the lemma when all the $\pm$ are $+$. To see \eqref{eq:f_M_along_C}, we note that by \eqref{eq:q_M_q_infty}
    \begin{equation} \label{eq:approx_f''_M(q_M)}
      f''_M(q_M(\theta)) = \left( 1 + \frac{q_M(\theta)}{M + 1} \right)^{-1} - \frac{1}{q_M(\theta)} = 1 - \frac{\sin\theta}{\theta} e^{-i\theta} + \bigO(M^{-1}).
    \end{equation}
    So if $M$ is large enough, we have that $\Re f''_M(q_M(\theta)) > 0$, and also that in $B\big(q_{M}(+\theta), ((M+1)/N)^{2/5}\big)$, if $s$ moves along the vertical line $\mathcal{C}_{\loc, +}$, $\Re f_M(s)$ decreases like a quadratic function. Similarly, since the curve $\Sigma^0_{\loc, +}$ defined by \eqref{eq:defn_Sigma_pm} has the tangent direction
    \begin{equation}
      \arg q'_M(\theta) = \arg q'_{\infty}(\theta) + \bigO(M^{-1}) = \arg \left( \frac{-\theta}{\sin^2 \theta} + \frac{e^{i\theta}}{\sin \theta} \right) + \bigO(M^{-1}).
    \end{equation}
    By the calculation
    \begin{equation}
      \Re \Big( \Big( \frac{-\theta}{\sin^2 \theta} + \frac{e^{i\theta}}{\sin \theta} \Big)^2 (1 - \frac{\sin\theta}{\theta} e^{-i\theta}) \Big) = \frac{1}{\sin^2\theta} \Big( \Big( \frac{\theta}{\sin\theta} - \cos\theta \Big)^2 + \sin^2\theta \Big) > 0,
    \end{equation}
    we have that in $B\big(q_{M}(+\theta), ((M+1)/N)^{2/5}\big)$, if $t$ moves along the vertical line $\Sigma^0_{\loc, +}$, $\Re f_M(s)$ increases like a quadratic function.
  \end{proof}

  Hence we conclude that (all the $\pm$ being the same)
  \begin{multline} \label{eq:local_est_sine}
    \begin{aligned}
      & e^{-\frac{N}{M + 1} \frac{\xi  - \eta }{\rho_{M,N} }q_{M}(\pm\theta)}    \lim_{\delta \to 0} \int_{\mathcal{C}_{\loc, \pm}} \frac{ds}{2\pi i} \oint_{\Sigma_{\loc, \pm}} \frac{dt}{2 \pi i} \frac{1}{s - t} e^{\frac{N}{M + 1}\big( f_{M,N}(s) - f_{M,N}(t)  \big)} e^{\frac{N}{M + 1} \frac{\xi t - \eta s}{\rho_{M,N}}}  \\
      = {}& \PV \oint_{\Sigma_{\loc, \pm}} \frac{dt}{2 \pi i} \int_{\mathcal{C}_{\loc, \pm}} \frac{ds}{2\pi i} \frac{1}{s - t} e^{\frac{N}{M + 1}\big( f_{M,N}(s) - f_{M,N}(t)  \big)} e^{\frac{N}{M + 1} \frac{\xi (t - q_M(\pm\theta)) - \eta (s - q_M(\pm\theta))}{\rho_{M,N}}}  \\
      = {}& \PV \oint_{\Sigma^0_{\loc, \pm}} \frac{dt}{2 \pi i} \int_{\mathcal{C}_{\loc, \pm}} \frac{ds}{2\pi i} \frac{1}{s - t} \frac{e^{\frac{N}{M + 1} \big( \frac{f_{M}''(q_{M}(\pm\theta))}{2} (s - q_M(\pm\theta))^2 - \eta(s - q_M(\pm\theta)) \big)}}{e^{\frac{N}{M + 1} \big( \frac{f_{M}''(q_{M}(\pm\theta))}{2} (t - q_M(\pm\theta))^2 - \xi(t - q_M(\pm\theta)) \big)}}
    \end{aligned} \\
    \times \left(1  + \bigO((M/N)^{\frac{1}{5}}) \right),
  \end{multline}
  where the $(1  + \bigO((M/N)^{1/5}))$ term is uniform with respect to $s, t$. We give an estimate to the integral in \eqref{eq:local_est_sine} when the sign $\pm$ is $+$. By \eqref{eq:f_MN_est} and Lemma \ref{lem:est_local}, we have, with $w = (s - q_M(\theta)) \sqrt{N/(M + 1)}$ and $z = (t - q_M(\theta)) \sqrt{N/(M + 1)}$,
  \begin{equation}
    \begin{split}
      & \int_{\mathcal{C}_{\loc, +}} \frac{ds}{2\pi i} \frac{1}{s - t} e^{\frac{N}{M + 1} \big( \frac{f_{M}''(q_{M}(\theta))}{2} (s - q_M(\theta))^2 - \eta(s - q_M(\theta)) \big)} \Big(1  + \bigO((M/N)^{\frac{1}{5}})\Big) \\
      = {}& \int^{(N/(M + 1))^{1/10} i}_{-(N/(M + 1))^{1/10} i} \frac{dw}{2\pi i} \frac{1}{w - z} e^{\frac{f_{M}''(q_{M}(\theta))}{2} w^2 - \sqrt{\frac{N}{M + 1}} \eta w} \Big(1  + \bigO((M/N)^{\frac{1}{5}})\Big) \\
      = {}& \bigO(1),
    \end{split}
  \end{equation}
  and the $\bigO(1)$ estimate is uniform in $t \in \Sigma^0_{\loc, +} \setminus \{ q_M(\theta) \}$ even if $t \to q_M(\theta)$. Then the outer $\PV$ integral can be estimated simply and we have that the right-hand side of \eqref{eq:local_est_sine} is
  \begin{equation} \label{eq:final_est_loc}
    \PV \oint_{\Sigma_{\loc, +}} \frac{dt}{2 \pi i} \frac{\bigO(1)}{e^{\frac{N}{M + 1} \big( \frac{f_{M}''(q_{M}(\theta))}{2} (t - q_M(\theta))^2 - \xi(t - q_M(\theta)) \big)}} = \bigO((M/N)^{2/5}).
  \end{equation}
  Here we only use the estimate that the exponential term in the denominator is bounded by $1$ in absolute value. Actually the inequality \eqref{eq:f_M_along_Sigma} can improve the right-hand side of \eqref{eq:final_est_loc} to $\bigO((M/N)^{1/2})$. Interested readers may consult \cite[Equations (2.28)--(2.37)]{Liu-Wang-Zhang14}. We conclude that at the two ``local'' parts of the contour, namely $(s, t) \in \mathcal{C}_{\loc, +} \times \Sigma^0_{\loc, +}$ or $\mathcal{C}_{\loc, -} \times \Sigma^0_{\loc, -}$, the double contour integral that defines $I_1$ in \eqref{eq:defn_I_1} is estimated as $\bigO((M/N)^{2/5})$.

  On the other hand, combining Lemmas \ref{saddle-t} and \ref{lem:est_local}, the ``global'' and ``local'' estimates of $f_{M, N}(t)$ on $\Sigma_{\mathrm{out}}$ and $\mathcal{C}_{\theta}$, we know that there exists $\epsilon > 0$ such that  for $t \in \Sigma_{\glob}$ and $s \in \mathcal{C}_{\glob}$
  \begin{align}
    \Re f_{M, N}(s) + \epsilon \left( \frac{M+1}{N} \right)^{\frac{4}{5}} < \Re f_{M, N}(q_{M}(\pm\theta)) < \Re f_{M, N}(t) -  \epsilon\left( \frac{M+1}{N} \right)^{\frac{4}{5}}
  \end{align}
  and $\Re f_{M, N}(s)$ goes to $-\infty$ fast as $\im{s} \to \pm \infty$ along $\mathcal{C}_{\glob}$. Hence we have the estimate that
  \begin{multline} \label{global_est_sine}
    \iint_{\mathcal{C}_{\theta} \times \Sigma_{\out} \setminus (\mathcal{C}_{\loc, +} \times \Sigma_{\loc, +} \cup \mathcal{C}_{\loc, -} \times \Sigma_{\loc, -})} \frac{ds}{2\pi i} \frac{dt}{2 \pi i} \frac{1}{s - t} e^{\frac{N}{M + 1}\big( f_{M,N}(s) - f_{M,N}(t)  \big)} e^{\frac{N}{M + 1} \frac{\xi t - \eta s}{\rho_{M,N}}}  \\
   = \bigO(e^{-\epsilon (N/(M+1))^{1/5}}).
  \end{multline}
  Here the key point is that the factor $1/(s - t)$ is bounded below by $((M + 1)/N)^{2/5}$ in absolute value. We remark that similar computation appears in \cite[Equations (2.38)--(2.40)]{Liu-Wang-Zhang14}.

  By \eqref{eq:local_est_sine} and \eqref{global_est_sine}, we prove \eqref{asymp:sin-1}, and thus  finish the proof.

\subsubsection{Proof of parts \ref{enu:thm:airy} and \ref{enu:thm:airy_trace} of Theorem \ref{thm1.3}}

Before we give the proof of parts \ref{enu:thm:airy} and \ref{enu:thm:airy_trace} of Theorem \ref{thm1.3}, we establish some results to be used. Throughout this subsubsection, $f_M(t), f_{M, N}(t), \rho_{M, N}$ stand for $f_M(t; 0), f_{M, N}(t; 0), \rho_{M, N}(0)$ respectively.

Recalling that  $ q_{M}(0) = (M + 1)/M$ by \eqref{eq:defn_q_M_h_M}, define, analogous to \eqref{eq:local_contours_sine},
\begin{equation}
  \begin{split}
    \Sigma_{\loc, +} = {}& \{ q_{M}(0) - (2(M + 1)/N)^{1/3} + re^{2\pi i/3} \mid 0 \leq r \leq (2(M + 1)/N)^{3/10} \}, \\
    \Sigma_{\loc, -} = {}& \{ q_{M}(0) - (2(M + 1)/N)^{1/3} + re^{\pi i/3} \mid -(2(M + 1)/N)^{3/10} \leq r \leq 0 \}, \\
    \mathcal{C}_{\loc, +} = {}& \{ q_{M}(0) + (2(M + 1)/N)^{1/3} + re^{4\pi i/3} \mid -(2(M + 1)/N)^{3/10} \leq r \leq 0 \}, \\
    \mathcal{C}_{\loc, -} = {}& \{ q_{M}(0) + (2(M + 1)/N)^{1/3} + re^{5\pi i/3} \mid 0 \leq r \leq (2(M + 1)/N)^{3/10} \}.
  \end{split}
\end{equation}
Then we denote $\Sigma_{\loc} = \Sigma_{\loc, +} \cup \Sigma_{\loc, -}$ and $\mathcal{C}_{\loc} =\mathcal{C}_{\loc, +} \cup \mathcal{C}_{\loc, -}$. Next in the contour integral formula for $\K_N(x, y)$, deform the contour for $s$ as $\mathcal{C}_0 = \mathcal{C}_{\loc} \cup \mathcal{C}_{\glob}$, such that
\begin{multline}
  \mathcal{C}_{\glob} = \{ q_M(0) + (2(M + 1)/N)^{1/3} - (2(M + 1)/N)^{3/10} e^{4\pi i/3} + iy \mid y \geq 0 \} \\
  \cup \{ q_{M}(0) + (2(M + 1)/N)^{1/3} + (2(M + 1)/N)^{3/10} e^{5\pi i/3} + iy \mid y \leq 0 \},
\end{multline}
and deform the contour $\Sigma$ for $t$ as $\Sigma_{\loc} \cup \Sigma^1_+ \cup \Sigma^2_+ \cup \Sigma^3_+ \cup \Sigma^4 \cup \Sigma^3_- \cup \Sigma^2_- \cup \Sigma^1_-$. To define these contours, let $\varphi$ be the unique solution on $(0, \pi)$ such that
\begin{equation}
  h_M(\varphi) = q_M(0) - (2(M + 1)/N)^{1/3} - \frac{1}{2} (2(M + 1)/N)^{3/10},
\end{equation}
where $h_M$ is defined by \eqref{eq:defn_q_M_h_M}, and let $C$ be a large enough positive constant independent of $M, N$ such that $\log C + v_M(0) > 0$ for all large enough $M$. Then
\begin{equation}
  \begin{split}
    \Sigma^1_{\pm} = {}& \text{vertical line connecting the left end of $\Sigma_{\loc, \pm}$ to $q_M(\pm\varphi)$}, \\
    \Sigma^2_{\pm} = {}& \Big\{ t \in \Sigma_{\pm}: \pm \arg t > \varphi \text{ and } \Re t \geq -C \Big\}, \\
    \Sigma^3_{\pm} = {}& \Big\{ t = x \pm i \im\{q_{M}(h_{M}^{-1}(-C))\}: x \in [-(M + 1) + \frac{M + 1}{2N}, -C] \Big\}, \\
    \Sigma^4 = {}& \Big\{t = -(M + 1) + \frac{M + 1}{2N} + iy: y \in [-\im q_{M}(h_{M}^{-1}(-C)), \im q_{M}(h_{M}^{-1}(-C))]\Big\}.
  \end{split}
\end{equation}
The orientation of the contours defined above are determined by
\begin{enumerate*}
\item
  $\Sigma$ is positively oriented, and
\item
  $\mathcal{C}_0$ is from $-i\infty$ to $+i\infty$.
\end{enumerate*}; see Figure \ref{intcon-airy}.
\begin{figure}[h]
  \centering
  \begin{tikzpicture}
    % draw the axis
    \draw (-6,0) -- (2,0);%node[below] {$x$}
    % --------------------path in the lower half plane
    \draw[domain=-pi+0.8:-3*pi/5+0.01,->,>=stealth] plot({\x*(cos(\x r))*(sin(\x r))^(-1)},{\x});
    \draw (-1,-2.1) node[above]{$\Sigma_{-}^{2}$};
    \draw[->,>=stealth] (-4.51, {-pi+.8}) -- (-3.5, {-pi+.8});
    \draw (-3.52, {-pi+.8}) -- ({-2.27}, {-pi+0.8});
    \draw (-3.5,-pi+0.65) node[above]{$\Sigma_{-}^{3}$};
    % --------------------\Sigma_{local}
    \draw (0.6, {-.5}) -- (0.9, 0);
    \draw (0.8,0.3) node[left]{$\Sigma_{\mathrm{local}}$};
    \draw (0.9, 0)-- (0.6, {0.5});
    \draw[domain=1.05:3*pi/5,->,>=stealth] plot({\x*(cos(\x r))*(sin(\x r))^(-1)},{\x});
    \draw[domain=-3*pi/5:-1.05] plot({\x*(cos(\x r))*(sin(\x r))^(-1)},{\x});
    % --------------------path in the upper half plane
    \draw[domain= 3*pi/5-0.01:pi-.8] plot({\x*(cos(\x r))*(sin(\x r))^(-1)},{\x});
    \draw (-1,2) node[below]{$\Sigma_{+}^{2}$};
    \draw[->,>=stealth] ({-2.26}, {pi-.8}) -- (-3.5, {pi-.8});
    \draw (-3.45, {pi-.8}) -- (-4.51, {pi-.8});
    \draw (-3.5,pi-.8) node[below]{$\Sigma_{+}^{3}$};
    % ----------------left most vertical bar
    \draw[->,>=stealth] (-4.5, {pi-.8}) -- (-4.5, {1});
    \draw[->,>=stealth] (-4.5, {1.05}) -- (-4.5, {-1.05});
    \draw (-4.5, {-1}) -- (-4.5, {-pi+.8});
    \draw (-4.5,0.5) node[left]{$\Sigma^{4}$};

    % -----------------vertical path--------------------
    \draw (0.6, -0.49) -- (0.6, {-1.06});
    \draw (0.7,0.8) node[left]{$\Sigma_{+}^{1}$};
    \draw ({0.6}, {0.49}) -- (0.6, 1.06);
    \draw (0.7,-0.8) node[left]{$\Sigma_{-}^{1}$};
    % ----------------------integral contour for $s$-----------------------------------
    \draw (1.4, {-2.2}) -- (1.4, {-.49});
    \draw (1.4, {-.5}) -- (1.1, 0);
    \draw (1.2,0.3) node[right]{$\mathcal{C}_{\mathrm{local}}$};
    \draw (1.1, 0)-- (1.4, {0.5});
    \draw[->,>=stealth] (1.4, {0.49}) -- (1.4, {1.5});
    \draw (1.4, 1.49)-- (1.4, {2.2});
    \draw (1.3,1.3) node[right]{$\mathcal{C}_{\mathrm{global}}$};
    \filldraw (1, 0) circle (1pt);
    \draw (1, -0.2) node[below] {$\frac{M+1}{M}$};
  \end{tikzpicture}
  \caption{Schematic contours in the proof of part \ref{enu:thm:airy} of Theorem \ref{thm1.3}}\label{intcon-airy}
\end{figure}
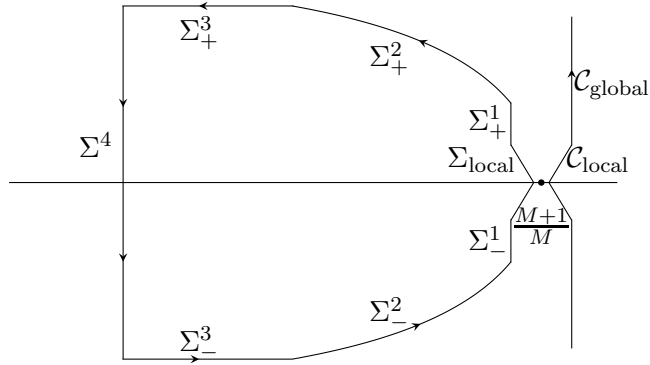

Note that at $q_M(0)$, we have similar to \eqref{eq:derivative_f_M} that
\begin{equation}
  f'_M(q_M(0)) = 0.
\end{equation}
Furthermore, since $f''_M(t) = (1 + \frac{t}{M + 1})^{-1} - \frac{1}{t}$ and $f'''_M(t) = t^{-2} - (M + 1)(M + 1 + t)^{-2}$, we have %(cf.~\cite[Equation (2.60)]{Liu-Wang-Zhang14})

\begin{equation}
  f''_M(q_M(0)) = 0, \quad f'''_M(q_M(0)) = 1 + \bigO(M^{-1}).
\end{equation}
Hence in $B\big(q_M(0), ((M+1)/N)^{3/10}\big)$, we have
\begin{equation} \label{eq:approx_f_MN_local}
  f_{M, N}(t) = f_{M, N}(q_M(0)) + \frac{1}{6} f'''_M(q_M(0))(t - q_M(0))^3 + \bigO((M/N)^{6/5}\big).
\end{equation}

On the other hand, we have the ``global'' estimate of $\Re f_{M, N}(t)$ for $t \in \Sigma \setminus \Sigma_{\loc}$ and of $\Re f_{M, N}(s)$ for $s \in \mathcal{C}_{\glob}$ that is parallel to that in Global Estimates \hyperref[globalest_A]{A}: % Note that on $\Sigma^1_{\pm}, \Sigma^2_{\pm}$ and a compact part of $\mathcal{C}_{\glob}$, we only need to consider the behaviour of $f_M(t)$, because there $f_{M, N}(t)$ is well approximated by $f_M(t)$ by \eqref{eq:est_f_MN_by_fm}.
\begin{globalestB}
  \begin{enumerate}[label=(B.\arabic*)]
  \item \label{enu:Sigma_3_B}
    When $t \in \Sigma^3_{\pm}$, by argument the same as that in \ref{enu:A_Sigma_3} with only $\theta$ replaced by $0$, $\Re f_{M, N}(t)$ increases as $t$ moves leftward, and attains its minimum at the two right end points $-C \pm ih_{M}^{-1}(-C)$.
  \item \label{enu:Sigma_4_B}
    When $t \in \Sigma^4$, by argument the same as that in \ref{enu:A_Sigma_4}, we have that the maximum of $\Re f_{M, N}(t)$ over $\Sigma^4$ is attained at $t = -(M + 1) + \frac{M + 1}{2N}$. Furthermore, by direct computation, we have $\Re f_{M, N}(-(M + 1) + \frac{M + 1}{2N}) < \Re f_{M, N}(q_M(\pm\theta)) - \epsilon$ for some $\epsilon > 0$.
  \item \label{enu:Sigma_2_B}
    When $t \in \Sigma^2_{\pm}$, by part \ref{enu:saddle-t:1} of Lemma \ref{saddle-t}, $\Re f_M(t)$ increases as $t$ moves leftward. Then by the approximation \eqref{eq:est_f_MN_by_fm}, $\Re f_{M, N}(t)$ has the same property. The argument is the same as in \ref{enu:A_Sigma_12}.
  \item \label{enu:Sigma_1_B}
    When $t \in \Sigma^1_{\pm}$, by part \ref{enu:saddle-t:2} of Lemma \ref{saddle-t}, $\Re f_M(t)$ increases as $t$ moves upward on $\Sigma^1_+$, or $t$ moves downward on $\Sigma^1_-$, and so does $\Re f_{M, N}(t)$ due to the approximation \eqref{eq:est_f_MN_by_fm}.
  \item
    By argument the same as that in \ref{enu:A_C}, we consider the infinite long contour $\mathcal{C}_{\glob}$ in two parts, one is $\mathcal{C}^1_{\glob} = \{ s \in \mathcal{C}_{\glob} \mid \lvert \Im s \rvert \leq K \}$ and  $\mathcal{C}^2_{\theta} = \mathcal{C}_{\theta} \setminus \mathcal{C}^1_{\theta}$, where $K$ is a large positive constant. Then by part \ref{enu:saddle-t:3} of Lemma \ref{saddle-t}, $\Re f_M(s)$ decreases as $s$ moves upward on $\mathcal{C}_{\glob} \cap \compC_+$ or $s$ moves downward on $\mathcal{C}_{\glob} \cap \compC_-$. We can also show by direct computation that $\Im f'_{M, N}(t) > 1$ if $s \in \mathcal{C}^2_{\glob} \cap \compC_+$ and $\Im f'_{M, N}(s) < -1$ if $s \in \mathcal{C}^2_{\glob} \cap \compC_-$. Hence $f_{M, N}(s)$ decreases at least linearly fast as $s$ moves to $\pm i\infty$ along $\mathcal{C}^2_{\glob}$.
  \end{enumerate}
\end{globalestB}

\begin{proof}[Proof of part \ref{enu:thm:airy} of Theorem \ref{thm1.3}]
  Upon the change of variables $t = q_M(0) + (2(M + 1)/N)^{1/3} \tau$ and $s = q_M(0) + (2(M + 1)/N)^{1/3} \sigma$ and noting the value of $\rho_{M, N} = \rho_{M, N}(0)$ given in \eqref{eq:form_rho}, (cf.~\cite[Equations (2.63)--(2.65)]{Liu-Wang-Zhang14})
  \begin{equation} \label{eq:lical_airy_est}
    \begin{split}
      & \left( \frac{N}{2(M + 1)} \right)^{1/3} e^{-(\xi - \eta)\frac{N}{M + 1} \frac{q_M(0)}{\rho_{M, N}}} \int_{\mathcal{C}_{\loc}} \frac{ds}{2\pi i} \int_{\Sigma_{\loc}} \frac{dt}{2 \pi i} \frac{1}{s - t}  e^{\frac{N}{M + 1}(f_{M, N}(s) - f_{M, N}(t))} e^{\frac{N}{M + 1} \frac{\xi t - \eta s}{\rho_{M, N}}} \\
      = {}& \left( \frac{N}{2(M + 1)} \right)^{1/3} \int_{\mathcal{C}_{\loc}} \frac{ds}{2\pi i} \int_{\Sigma_{\loc}} \frac{dt}{2 \pi i} \frac{1}{s - t} e^{\frac{N}{M + 1}[(s - q_M(0))^3 - (t - q_M(0))^3] + \bigO \left( (M/N)^{1/5} \right)} \\
      & \phantom{\smash{\left( \frac{N}{2(M + 1)} \right)^{1/3} \int_{\mathcal{C}_{\loc}} \frac{ds}{2\pi i} \int_{\Sigma_{\loc}} \frac{dt}{2 \pi i} \frac{1}{s - t}}} \times e^{\frac{N}{M + 1} \frac{\xi (t - q_M(0)) - \eta (s - q_M(0))}{\rho_{M, N}}} \\
      = {}& \int_{\mathcal{C}^{(N/(2M + 2))^{1/30}}_<} \frac{d\sigma}{2\pi i} \int_{\Sigma^{(N/(2M + 2))^{1/30}}_>} \frac{d\tau}{2\pi i} \frac{1}{\sigma - \tau} \frac{e^{\frac{\sigma^3}{3} - \eta\sigma}}{e^{\frac{\tau^3}{3} - \xi\tau}}
    \left( 1+ \bigO \left( (M/N)^{1/5}\right) \right) \\
   %   \\  = {}& \int_{\mathcal{C}^{\infty}_<} \frac{d\sigma}{2\pi i} \int_{\Sigma^{\infty}_>} \frac{d\tau}{2\pi i} \frac{1}{\sigma - \tau} \frac{e^{\frac{\sigma^3}{3} - \eta\sigma}}{e^{\frac{\tau^3}{3} - \xi\tau}} (1 + \bigO((M/N)^{1/5})) \\
      = {}& K_{\Airy}(\xi, \eta) + \bigO \left( (M/N)^{1/5} \right).
    \end{split}
  \end{equation}
Here,  similar to   \eqref{eq:defn_Airy_contours},
for real $R>0$  the contours are defined to be  upwards and parametrized as
\begin{equation} \label{eq:defn_Airy_contours-12}
  \begin{split}
    \Sigma^R_> = {}& \{ -1 + re^{2\pi i/3} \mid 0 \leq r \leq R \} \cup \{ -1 + re^{\pi i/3} \mid -R \leq r \leq 0 \}, \\
    \mathcal{C}^R_< = {}& \{ 1 + re^{4\pi i/3} \mid -R \leq r \leq 0 \} \cup \{ 1 + re^{5\pi i/3} \mid 0 \leq r \leq R \}.
  \end{split}
\end{equation}

  On the other hand, by Global Estimates B, if $\xi, \eta$ are in a compact subset of $\realR$, $\rho_{M, N} = \rho_{M, N}(0)$ defined in \eqref{eq:form_rho}, there exist $\epsilon > 0$ such that for $t \in \Sigma \setminus \Sigma_{\loc}$ and $s \in \mathcal{C}_{\glob}$

  \begin{multline}
    \Re \left[ f_{M, N}(s; 0) - \frac{\eta (s - q_M(0))}{\rho_{M, N}} \right] + \epsilon \left( \frac{M+1}{N} \right)^{9/10} \\
    \begin{aligned}
      < {}& \Re f_{M, N}(q_{M}(\pm\theta)) \\
      < {}& \Re \left[ f_{M, N}(t) - \frac{\xi (t - q_M(0))}{\rho_{M, N}} \right] -  \epsilon\left( \frac{M+1}{N} \right)^{9/10},
    \end{aligned}
  \end{multline}
  and $\Re f_{M, N}(s)$ goes to $-\infty$ fast as $\im{s} \to \pm \infty$ along $\mathcal{C}_{\glob}$. Hence combining the estimates above and the behaviour of $f_{M, N}(t; 0)$ on $\Sigma_{\loc}$ and $\mathcal{C}_{\loc}$, we have the estimate (cf.~\cite[Equations (2.66)--(2.69)]{Liu-Wang-Zhang14})
  \begin{equation} \label{global_est_airy}
    \begin{split}
      & \left( \frac{N}{2(M + 1)} \right)^{1/3} e^{-(\xi - \eta)\frac{N}{M + 1} \frac{q_M(0)}{\rho_{M, N}}} \iint_{\mathcal{C}_0 \times \Sigma \setminus \mathcal{C}_{\loc} \times \Sigma_{\loc}} \frac{ds}{2\pi i} \frac{dt}{2 \pi i} \frac{1}{s - t} e^{\frac{N}{M + 1}\big( f_{M,N}(s) - f_{M,N}(t)  \big)} \\
      & \phantom{\smash{\left( \frac{N}{2(M + 1)} \right)^{1/3} e^{-(\xi - \eta)\frac{N}{M + 1} \frac{q_M(0)}{\rho_{M, N}}} \iint_{\mathcal{C}_0 \times \Sigma \setminus \mathcal{C}_{\loc} \times \Sigma_{\loc}} \frac{ds}{2\pi i} \frac{dt}{2 \pi i} \frac{1}{s - t}}} \times e^{\frac{N}{M + 1} \frac{\xi t - \eta s}{\rho_{M, N}}} \\
      = {}& \left( \frac{N}{2(M + 1)} \right)^{1/3} \iint_{\mathcal{C}_0 \times \Sigma \setminus \mathcal{C}_{\loc} \times \Sigma_{\loc}} \frac{ds}{2\pi i} \frac{dt}{2 \pi i} \frac{1}{s - t} \frac{e^{\frac{N}{M + 1} \left( f_{M,N}(s) - \frac{\eta (s - q_M(0))}{\rho_{M, N}} \right)}}{e^{\frac{N}{M + 1} \left( f_{M,N}(t) - \frac{\xi (t - q_M(0))}{\rho_{M, N}} \right)}} \\
   = {}& \bigO(e^{-\epsilon (N/(M+1))^{1/10}}).
    \end{split}
  \end{equation}
  Combining \eqref{eq:lical_airy_est} and \eqref{global_est_airy}, we have
  \begin{equation}
    \begin{split}
      & \frac{1}{\rho_{M, N}} e^{-(\xi - \eta)\frac{N}{M + 1} \frac{q_M(0)}{\rho_{M, N}}} \K_N(g(\xi), g(\eta)) \\
      = {}& \left( \frac{N}{2(M + 1)} \right)^{1/3} e^{-(\xi - \eta)\frac{N}{M + 1} \frac{q_M(0)}{\rho_{M, N}}} \iint_{\mathcal{C}_0 \times \Sigma} \frac{ds}{2\pi i} \frac{dt}{2 \pi i} \frac{1}{s - t} e^{\frac{N}{M + 1}\big( f_{M,N}(s) - f_{M,N}(t)  \big)} e^{\frac{N}{M + 1} \frac{\xi t - \eta s}{\rho_{M, N}}} \\
      = {}& K_{\Airy}(\xi, \eta) + \bigO\big((M/N)^{1/5}\big).%\bigO\big(((M+1)/N)^{1/5}\big).
    \end{split}
  \end{equation}
  Part \ref{enu:thm:airy} of Theorem \ref{thm1.3} is thus proved, by noting that $q_M(0) = (M+1)/M$.
\end{proof}

The proof of part \ref{enu:thm:airy_trace} of Theorem \ref{thm1.3} relies on the following technical result that is analogous to Lemma \ref{lem:first_trace_norm}:
\begin{lem} \label{lem:second_trace_norm}
  Let $C \in \realR$, $\mathbf{\K}_N$ and $\mathbf{K}_{\Airy}$ be the integral operators on $L^2((C, +\infty))$ whose kernel are $e^{-\frac{N}{M + 1}  \frac{\xi -\eta}{\rho_{M,N}}} \frac{1}{\rho_{M,N}} \K_{N}\big(g(\xi), g(\eta)\big)$  defined in Theorem \ref{thm1.3} and $K_{\Airy}(\xi, \eta)$ respectively. Then $\mathbf{\K}_N$ and $\mathbf{K}_{\Airy}$ are trace class operators and as $N \to \infty$, $\mathbf{\K}_N \to \mathbf{K}_{\Airy}$ in trace norm.
\end{lem}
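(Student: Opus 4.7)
The strategy mirrors that of Lemma~\ref{lem:first_trace_norm}: factor both kernels as compositions of Hilbert--Schmidt operators, and then prove Hilbert--Schmidt convergence of each factor separately. Abbreviate the rescaled kernel of the lemma as $K^*_N(\xi,\eta):=e^{-\frac{N}{M+1}(\xi-\eta)/\rho_{M,N}}\rho_{M,N}^{-1}\K_N(g(\xi),g(\eta))$. The algebraic identity
\begin{equation*}
\frac{1}{s-t}=\int_C^\infty e^{-(s-t)(u-C)}\,du,\qquad \Re(s-t)>0,
\end{equation*}
drives the factorization. It applies to the rescaled integral \eqref{eq:airy_kernel_rescaled}, since the contour $\mathcal{C}_0$ used in the proof of part~\ref{enu:thm:airy} lies strictly to the right of $\Sigma$ near the saddle $q_M(0)$ (with separation at least $2(2(M+1)/N)^{1/3}$) and stays to the right globally, and it applies to the Airy representation \eqref{eq:Airy} since $\Re\sigma\geq 1>-1\geq\Re\tau$ there. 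Inserting the identity, absorbing the $t$-dependent factors into $G$ and the $s$-dependent factors into $H$, and distributing the outer exponential symmetrically between the two factors, yields
\begin{equation*}
K^*_N(\xi,\eta)=\int_C^\infty G_N(\xi,u)H_N(u,\eta)\,du,\quad K_{\Airy}(\xi,\eta)=\int_C^\infty G_\infty(\xi,u)H_\infty(u,\eta)\,du,
\end{equation*}
with the limiting factors
\begin{equation*}
G_\infty(\xi,u)=\int_{\Sigma^\infty_>}\frac{d\tau}{2\pi i}e^{-\tau^3/3+(\xi+u-C)\tau},\quad H_\infty(u,\eta)=\int_{\mathcal{C}^\infty_<}\frac{d\sigma}{2\pi i}e^{\sigma^3/3-(\eta+u-C)\sigma},
\end{equation*}
and $G_N,H_N$ defined by the analogous single-contour integrals inherited from \eqref{eq:airy_kernel_rescaled}.

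The remaining task is to establish
\begin{equation*}
\iint_{(C,\infty)^2}|G_N-G_\infty|^2\,d\xi\,du\longrightarrow 0,\quad \iint_{(C,\infty)^2}|H_N-H_\infty|^2\,du\,d\eta\longrightarrow 0,
\end{equation*}
by the domain-splitting argument of Lemma~\ref{lem:first_trace_norm}. For a parameter $R>C$ set $I_1(R)=\{(\xi,u)\in (C,\infty)^2:\,\xi+u\leq R\}$ and $I_2(R)=(C,\infty)^2\setminus I_1(R)$. On $I_1(R)$, uniform pointwise convergence $G_N\to G_\infty$ follows from the analysis already developed for part~\ref{enu:thm:airy} of Theorem~\ref{thm1.3}: the change of variables $t=q_M(0)+(2(M+1)/N)^{1/3}\tau$, the cubic expansion \eqref{eq:approx_f_MN_local} near the saddle, and Global Estimates~B for the tails of the contour together reduce the integral defining $G_N$ to that defining $G_\infty$ up to a multiplicative $(1+\bigO((M/N)^{1/5}))$ error uniform on compacta. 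On $I_2(R)$, one needs a common $L^2$-integrable envelope of the form $|G_N(\xi,u)|+|G_\infty(\xi,u)|\leq C_0 e^{-c_0(\xi+u-C)^{3/2}}$; dominated convergence and the limit $R\to\infty$ then yield Hilbert--Schmidt convergence of $G_N$, and a parallel argument with envelope $C_0 e^{-c_0(\eta+u-C)^{3/2}}$ handles $H_N$.

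The main obstacle is producing the uniform Airy-type tail bound $|G_N(\xi,u)|\leq C_0 e^{-c_0(\xi+u-C)^{3/2}}$ with constants independent of $M$ and $N$. When $\xi+u-C$ is large, the effective saddle of the integrand in $G_N$ migrates to the right of $q_M(0)$, and one must deform $\Sigma$ through this moving saddle along a steepest-descent direction. The deformation crosses no singularities (the only poles of $\Gamma(t)/\Gamma(t+N)^{M+1}$ lie at $t=0,-1,\dots,-N+1$, all to the left of $q_M(0)$), so it is admissible. The delicate point is verifying that the resulting saddle-point estimate produces the uniform cubic-exponential decay, with the corrections coming from $f_{M,N}-f_M$ (see \eqref{eq:est_f_MN_by_fm}) and from the sub-leading $\Gamma$-ratios contributing only bounded multiplicative constants. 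Once this tail bound and its $H$-counterpart are in place, the Hilbert--Schmidt convergence of both factors follows, yielding the claimed trace-norm convergence $\mathbf{\K}_N\to\mathbf{K}_{\Airy}$.
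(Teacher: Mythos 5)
Your overall strategy --- factorizing each kernel through the identity $\frac{1}{s-t}=\int_C^\infty e^{-(s-t)(u-C)}\,du$ into Hilbert--Schmidt factors $G$ and $H$, then proving Hilbert--Schmidt convergence of each factor by splitting $(C,\infty)^2$ into a compact region $I_1(R)$ and a tail region $I_2(R)$ --- is exactly the paper's strategy, and your $G_\infty$, $H_\infty$ agree with the paper's choices. The step you flag as the ``main obstacle'' is, however, a self-inflicted one. You ask for a uniform Airy-type envelope $\lvert G_N(\xi,u)\rvert \leq C_0 e^{-c_0(\xi+u-C)^{3/2}}$, obtained by a moving-saddle deformation uniform in $M,N$; that deformation is genuinely delicate and is not carried out in your sketch. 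It is also unnecessary: the paper uses the much cruder envelope $\lvert G_N(\xi,u)\rvert \leq \mathrm{const}\cdot e^{-(\xi+u-C)/2}$, which already lies in $L^2((C,\infty)^2)$ and therefore closes the dominated-convergence argument, and which requires no deformation at all. The point is that the linear part of the exponent of $g_N$ can be bounded on the \emph{fixed} contour $\Sigma$: every $t\in\Sigma$ satisfies $\Re\big(t-q_M(0)\big)\leq -\big(2(M+1)/N\big)^{1/3}$, while the scaling factor is $\frac{N}{(M+1)\rho_{M,N}} = 2^{-1/3}\big(N/(M+1)\big)^{1/3}$, so these two cancel to give
\begin{equation*}
  \frac{N}{M+1}\,\frac{(\xi+u-C)\,\Re\big(t-q_M(0)\big)}{\rho_{M,N}} \;\leq\; -(\xi+u-C),
\end{equation*}
uniformly in $M,N$; meanwhile Global Estimates B give $\Re f_{M,N}(t)\geq f_{M,N}\big(q_M(0)\big)$ on all of $\Sigma$, so the $f_{M,N}$-part of the integrand has modulus at most $1$ and in fact decays rapidly on the global portions of $\Sigma$, so that the contour integral stays uniformly bounded. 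The same elementary argument gives the exponential envelope for $G_\infty$ and for the $H$-factors. So your structure is right, but the saddle migration you treat as the delicate point is an unnecessary detour that the paper's simple exponential bound sidesteps entirely.
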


\begin{proof}[Proof of Lemma \ref{lem:second_trace_norm}]
  The strategy of the proof is the same as that for part \ref{enu:thm:crit_2} of Theorem \ref{thm:crit}. We note that the kernels $e^{-N(x - y)/((M + 1) \rho_{M,N})} (1/\rho_{M,N}) \K_{N}\big(g(x), g(y)\big)$ and $K_{\Airy}(x, y)$ can be written as
  \begin{equation} \label{eq:convolutions_Airy}
    \int^{\infty}_C G_N(x, r)H_N(r, y) dr \quad \text{and} \quad \int^{\infty}_C G_{\infty, \gamma}(x, r) H_{\infty, \gamma}(r, y) dr
  \end{equation}
  respectively, where
   \begin{align}
    G_N(x, r) = {} \oint_{\Sigma} g_N(t; x, r) \frac{dt}{2\pi i}, \ g_N(t; x, r) = e^{\frac{N}{M + 1} \big( f_{M, N}(q_{M}(0)) -f_{M, N}(t) + \frac{(x + r - C)(t - q_M(0))}{\rho_{M, N}} \big)}, \label{eq:G_N_Airy} \end{align}
%  \begin{align}
%  H_N(r, y) =  \int_{\mathcal{C}_0} h_N(s; r, y) \frac{ds}{2\pi i},\,h_N(s; r, y) =  e^{\frac{N}{M + 1} \big( f_{M, N}(s)- f_{M, N}(q_{M}(0)) - \frac{(y + r - C)(s - q_M(0))}{\rho_{M, N}} \big)},
%  \end{align}
       \begin{align}
%    G_N(x, r) = {}& \oint_{\Sigma} g_N(t; x, r) \frac{dt}{2\pi i}, & g_N(t; x, r) = {}& e^{\frac{N}{M + 1} \big( -f_{M, N}(t)+f_{M, N}(q_{M}(0))  + \frac{(x + r - C)(t - q_M(0))}{\rho_{M, N}} \big)}, \label{eq:G_N_Airy} \\
    G_{\infty, \gamma}(x, r) = {}& \oint_{\Sigma^{\infty}_>} g_{\infty}(\tau; x, r) \frac{dt}{2\pi i}, & g_{\infty}(\tau; x, r) = {}& e^{-\frac{\tau^3}{3} + (x + r - C)\tau}, \label{eq:G_infty_Airy} \\
    H_N(r, y) = {}& \int_{\mathcal{C}_0} h_N(s; r, y) \frac{ds}{2\pi i}, & h_N(s; r, y) = {}& e^{\frac{N}{M + 1} \big( f_{M, N}(s)- f_{M, N}(q_{M}(0)) - \frac{(y + r - C)(s - q_M(0))}{\rho_{M, N}} \big)}, \\
    H_{\infty, \gamma}(r, y) = {}& \int_{\mathcal{C}^{\infty}_<} h_{\infty}(\sigma; r, y) \frac{ds}{2\pi i}, & h_{\infty}(\sigma; r, y) = {}& e^{\frac{\sigma^3}{3} - (y + r - C)\sigma}.
  \end{align}
  We only need to prove the convergence
  \begin{align}
    \lim_{N \to \infty} \iint_{(C, \infty)^2} dx dr \left\lvert G_N(x, r) - G_{\infty, \gamma}(x, r) \right\rvert^2 = {}& 0, \label{eq:HS_norm_G_Airy} \\
    \lim_{N \to \infty} \iint_{(C, \infty)^2} dr dy \left\lvert H_N(r, y) - H_{\infty, \gamma}(r, y) \right\rvert^2 = {}& 0. \label{eq:HS_norm_H_Airy}
  \end{align}

  To prove \eqref{eq:HS_norm_G_Airy}, we use the same method as for \eqref{eq:HS_norm_G}: We divide the integral domain $[C, \infty)^2$ into two regions: $I_1(R) = \{ (x, r) \mid x \geq C,\ r \geq C, \text{ and } x + r \leq R \}$ and $I_2(R) = \{ (x, r) \mid x \geq C,\ r \geq C, \text{ and } x + r > R \}$, where $R > C$ is a constant.

  On the region $I_1(R)$, by the estimate \eqref{eq:approx_f_MN_local} of $f_{M, N}(t, w)$ on $\Sigma_{\loc}$, we have that the integrand in \eqref{eq:G_N_Airy} and the integrand in \eqref{eq:G_infty_Airy} satisfy the relation
  \begin{equation}
     g_N(t; x, r) = g_{\infty}(\tau; x, r) + \bigO((M/N)^{1/5}) \quad \text{where} \quad t = q_M(0) + (2(M + 1)/N)^{1/3} \tau,
  \end{equation}
  where the $\bigO \left( (M/N)^{1/5} \right)$ estimate is uniform in $t \in \Sigma_{\loc}$ and in $(x, r) \in I_1(R)$. Hence with the contours defined in \eqref{eq:defn_Airy_contours-12}
  \begin{equation}
    \int_{\Sigma_{\loc}} g_N(t; x, r) \frac{dt}{2\pi i} = \int_{\Sigma^{(N/(2M + 2))^{1/30}}_>} g_{\infty}(\tau; x, r) \frac{dt}{2\pi i} + \bigO ((M/N)^{1/5}),
  \end{equation}
   uniform as $(x, r) \in I_1(R)$.

   On the other hand, by the estimate \ref{enu:Sigma_3_B}, \ref{enu:Sigma_4_B} \ref{enu:Sigma_2_B} and \ref{enu:Sigma_1_B} for $g_N(t; x, r)$ on the remaining contour $\Sigma_{\glob}$, and some straightforward estimate of $g_{\infty}(\tau; x, r)$ on $\Sigma^{\infty}_> \setminus \Sigma^{(N/(2M + 2))^{1/30}}_>$, we have
  \begin{equation}
    \int_{\Sigma_{\glob}} g_N(t; x, r) \frac{dt}{2\pi i} = \bigO(e^{-\epsilon \left( \frac{N}{M+1} \right)^{1/10}}), \int_{\Sigma^{\infty}_> \setminus \Sigma^{(N/(2M + 2))^{1/30}}_>} g_{\infty}(\tau; x, r) \frac{d\tau}{2\pi i} = \bigO(e^{-\epsilon \left( \frac{N}{M+1} \right)^{1/10}}),
  \end{equation}
  for some $\epsilon > 0$ uniformly for $(x, r) \in I_1(R)$. Hence we prove that on $I_1(R)$, $G_N(x, r) - G_{\infty, \gamma}(x, r)$ approaches $0$ uniformly.

  For $(x, y) \in I_2(R)$, we use the estimate of $g_N(t; x, r)$ as shown above, to conclude that
  \begin{equation}
    \Re f_{M, N}(t) > 0 \quad \text{and} \quad \lvert g_N(t; x, r) \rvert \leq e^{\frac{N}{M + 1} \frac{(x + r - C)(\Re (t - q_M(0)))}{\rho_{M, N}}}
  \end{equation}
  as $t \in \Sigma$. Hence we conclude that $\lvert G_N(x, r) \rvert < \text{constant} \cdot e^{-(x + r - C)/2}$ for $(x, r) \in I_2$. By direct computation, we also find that $\lvert G_{\infty}(x, r) \rvert < \text{constant} \cdot e^{-(x + r - C)/2}$.

  Using the arbitrariness of $R$, we prove \eqref{eq:HS_norm_G_Airy} by the dominance convergence theorem.

  The convergence \eqref{eq:HS_norm_H_Airy} can be proved in the same way: Dividing the integral domain $(C, \infty)^2$ into $I_1(R)$ and $I_2(R)$, depending on $r + y \leq R$ or $> R$, for any $R > C$, then show that $\lvert H_N(r, y) - H_{\infty, \gamma}(r, y) \rvert$ converges uniformly to $0$ in $I_1(R)$, and is bounded by $\text{constant} \cdot e^{-(r - C + y)/2}$ in $I_2(R)$. Then the convergence is proved. The details are omitted.
\end{proof}

\subsubsection{Proof of Lemma \ref{saddle-t}} \label{subsubsec:proof_lemma_saddle_t}
The proof of this lemma is just the same as that of \cite[Lemmas 3.1 and 3.2]{Liu-Wang-Zhang14}, but to make  the proof of Theorem \ref{thm1.3}  self-contained we give it here.

Recalling that $v_{M}(\theta)$ is defined as in \eqref{parameter-vm}, a direct computation shows that
\begin{align}
  \frac{d}{d \phi} f_{M}(q_{M}(\phi); \theta)
  % = \frac{\partial}{\partial t} f_{M}(t; \theta) \frac{d}{d \phi} q_{M}(\phi)
  =(v_{M}(\phi) - v_{M}(\theta))\frac{d}{d \phi} q_{M}(\phi).
\end{align}
One can show that both $v_{M}(\phi)$ and $\Re{q_{M}(\phi)}$ are decreasing functions in $\phi \in [0, \pi)$, then part \ref{enu:saddle-t:1} follows.

To prove the rest statements we need the following two computations
\begin{equation}
  \begin{split}
    \frac{d}{d y} \Re{f_{M}(x + iy; \theta)} = {}& - \left.\Im{\frac{d}{d z}f_{M}(z; \theta)}\right|_{z = x + iy}\\
    = {}& - (M + 1) \arctan \frac{y}{M+1+x} + \arctan\frac{y}{x}
  \end{split}
\end{equation}
and
\begin{equation}
  \begin{split}
    \frac{d^{2}}{d y^{2}} \Re{f_{M}(x + iy; \theta)} = {}& - \left.\Re{\frac{d^{2}}{d z^{2}}f_{M}(z; \theta)}\right|_{z = x + iy}\\
    = {}& - \frac{(M+1+x)x(M x - (M+1))+ y^{2}((M+1)(M+1+x) - x)}{((M+1+x)^{2} + y^{2}) (x^{2} + y^{2})}.
  \end{split}
\end{equation}
When $\theta \in (0, \pi)$, $\Re{q_{M}(\theta)} < 1 + M^{-1}$ there exists a unique $y_{0} > 0$ such that
\begin{equation}
  \left.\frac{d^{2}}{d y^{2}} \Re{f_{M}(\Re{q_{M}(\theta)} + iy; \theta)}\right|_{y = y_{0}} = 0.
\end{equation}
It is easy to see that $\frac{d}{d y} \Re{f_{M}(\Re{q_{M}(\theta)} + iy; \theta)}$ is increasing for $y \in (0, y_{0})$ and decreasing for $y > y_{0}$. Combining the fact that
\begin{equation} \label{enu-saddle-temp0}
  \left.\frac{d}{d y} \Re{f_{M}(x + iy; \theta)}\right|_{y=0} = \left.\frac{d}{d y} \Re{f_{M}(x + iy; \theta)}\right|_{y=\Im{q_{M}(\theta)}} = 0,
\end{equation}
the first two inequalities in the part \ref{enu:saddle-t:2} follow. By the symmetry of $\Re{f_{M}(x + iy; \theta)}$ the rest two inequalities also hold.

Since when $x > 1 + M^{-1}$, $\frac{d^{2}}{d y^{2}} \Re{f_{M}(x + iy; 0)} <0$ for all $y \in \mathbb{R}$, then part \ref{enu:saddle-t:3} can be deduced from \eqref{enu-saddle-temp0}.

\section{Further discussion} \label{sect:discuss}

In this last section we discuss a few relevant questions and add some comments.

\subsection{Critical kernel revisited}

The critical correlation kernel $K_{\crit}(x, y; \gamma)$ defined as in \eqref{softlimitK} admits an ``integrable"   form which may be more convenient for application. To this end, we introduce two families of functions
\begin{equation}
  f_{-1}(x)=  \oint_{\Sigma_{-\infty}} \frac{dt}{2 \pi i}  \Gamma(t) e^{ -\frac{\gamma}{2} t^{2} + x t}, \quad g_{-1}(x)= \int^{1 + i\infty}_{1 - i\infty} \frac{ds}{2\pi i} \frac{1}{\Gamma(s)} e^{ \frac{\gamma}{2} s^{2} -x s}
\end{equation}
and  for $ k=0,1,\ldots,$
\begin{align}
  f_{k}(x)=  \oint_{\Sigma_{-\infty}} \frac{dt}{2 \pi i}  \frac{\Gamma(t)}{t +k} e^{ -\frac{\gamma}{2} t^{2} +x t}, \quad g_{k}(x)= \int^{1 + i\infty}_{1 - i\infty}\frac{ds}{2\pi i} \frac{1}{(s +k)\Gamma(s)} e^{ \frac{\gamma}{2} s^{2} -x s},
\end{align}
where $\Sigma_{-\infty}$ is defined as in \eqref{softlimitK}. By the identity
\begin{equation}
  \frac{1}{s-t}=\int_{0}^{\infty} e^{-(s-t)u}   du, \quad \Re(s - t) > 0,
\end{equation}
and noting that we can assume $\Re(s - t) > 0$ for all $s, t$ in \eqref{softlimitK}, we have
\begin{align}
  K_{\crit}(x, y;\gamma)= \int_{0}^{\infty}  f_{-1}(u+x)  g_{-1}(u+y) du.
\end{align}
By the identity
 \begin{equation}
   (x-y)e^{xt-ys}= \Big(\frac{\partial}{\partial t }+\frac{\partial}{\partial s}\Big)e^{xt-ys},
 \end{equation}
we apply the integration by parts over infinite contours $\Sigma_{-\infty}$ for $t$ and from $1 - i\infty$ to $1 + i\infty$ for $s$, such that the integrand functions all vanish as either $s$ or $t$ moves to $\infty$ along their contours, to derive
\begin{equation}
  \begin{split}
    (y - x) K_{\crit}(x, y; \gamma) = {}& -\int^{1 + i\infty}_{1 - i\infty} \frac{ds}{2\pi i} \oint_{\Sigma_{-\infty}} \frac{dt}{2\pi i} \frac{1}{s - t} \frac{\Gamma(t)}{\Gamma(s)} \frac{e^{\frac{\gamma s^2}{2}}}{e^{\frac{\gamma t^2}{2}}} \Big(\frac{\partial}{\partial t }+\frac{\partial}{\partial s}\Big)e^{xt-ys} \\
    = {}& \int^{1 + i\infty}_{1 - i\infty} \frac{ds}{2\pi i} \oint_{\Sigma_{-\infty}} \frac{dt}{2\pi i} \bigg[ \frac{\partial}{\partial t} \Big( \frac{1}{s - t} \Gamma(t) e^{-\frac{\gamma t^2}{2}} \Big) \frac{1}{\Gamma(s) e^{-\frac{\gamma s^2}{2}}}
    \\
    & \phantom{ \int^{1 + i\infty}_{1 - i\infty} \frac{ds}{2\pi i} \oint_{\Sigma_{-\infty}} \frac{dt}{2\pi i}}  + \Gamma(t) e^{-\frac{\gamma t^2}{2}} \frac{\partial}{\partial s} \Big( \frac{1}{s - t} \frac{1}{\Gamma(s) e^{-\frac{\gamma s^2}{2}}} \Big) \bigg] e^{xt - ys}
     \\
    = {}& \gamma \int^{1 + i\infty}_{1 - i\infty} \frac{ds}{2\pi i} \oint_{\Sigma_{-\infty}} \frac{dt}{2\pi i} \frac{\Gamma(t)}{\Gamma(s)} \frac{e^{\frac{\gamma s^2}{2} - ys}}{e^{\frac{\gamma t^2}{2} - xt}} \\
    & +  \int^{1 + i\infty}_{1 - i\infty} \frac{ds}{2\pi i} \oint_{\Sigma_{-\infty}} \frac{dt}{2\pi i} \frac{\psi(s) - \psi(t)}{s - t} \frac{\Gamma(t)}{\Gamma(s)} \frac{e^{\frac{\gamma s^2}{2} - ys}}{e^{\frac{\gamma t^2}{2} - xt}}.
  \end{split}
\end{equation}
Then by the series expansion \eqref{eq:digamma_series} of the digamma function $\psi(z)$, we have
\begin{equation}
  \frac{\psi(s) - \psi(t)}{s - t} = \sum^{\infty}_{n = 0} \frac{-1}{s - t} \Big( \frac{1}{n + s} - \frac{1}{n + t} \Big) = \sum^{\infty}_{n = 0} \frac{1}{(n + s)(n + t)},
\end{equation}
from which
\begin{equation}
  K_{\crit}(x, y; \gamma) = \frac{1}{y - x} \Big( \gamma f_{-1}(x) g_{-1}(y) + \sum^{\infty}_{n = 0} f_n(x) f_n(y) \Big).
\end{equation}
%There are analogous results for $\widehat{K}_{\crit}(x, y; \gamma)$.

% {\color{red}{REVISE} }If we strengthen the result in Theorem \ref{thm:crit} from uniform convergence to the trace norm convergence of the integral operator with respect to the critical kernel $K_{\crit}(x, y;\gamma)$  in \eqref{softlimitK}, then as a direct consequence the limiting distribution of the largest Lyapunov exponent (or the largest singular value), after rescaling, converges to a new limit distribution which admits a Fredholm determinant expression
% \begin{equation}
%   F_{\crit}(x;\gamma)=1+\sum_{k=1}^{\infty} \frac{(-1)^k}{k!} \int_{x}^{\infty} \cdots \int_{x}^{\infty} \det[K_{\crit}( y_i, y_j;\gamma)]_{i,j=1}^k dy_{1} \cdots dy_{k},
% \end{equation}
% see \cite[Section 3.4]{Anderson-Guionnet-Zeitouni10} for more details about Fredholm determinants. Since the proof of trace norm convergence is only a technical elaboration that confirms a well-expected result, we do not give the detail.

\subsection{Criticality  in the bulk} \label{Sectbulkcrit}

Besides the critical scaling limit at the soft edge, Akemann, Burda and Kieburg also investigated the local bulk statistics in the critical regime and obtained a new interpolating kernel in \cite{Akemann-Burda-Kieburg18}. Inspired by their result \cite[Equation (13)]{Akemann-Burda-Kieburg18} (Actually, more details  appear in a   forthcoming  paper \cite{Akemann-Burda-Kieburg18b} by the same authors), we consider the bulk critical limit and give a different derivation.

To state the main theorem in this subsection, we need the Jacobi theta function, defined as
\begin{equation}
  \vartheta(z;\tau) = \sum^{\infty}_{n=-\infty}  e^{\pi i  n^2 \tau+ 2\pi i n z}, \quad \Im{\tau}>0,\  z\in \compC.
\end{equation}
Let $[x]$ be the greatest integer less than or equal to $x$.

\begin{thm} [Bulk criticality in case \ref{enu:case_2}] \label{thm:bulkcrit}
  Suppose that  $\lim_{N\to\infty} M/N =\gamma \in (0, \infty)$. For any given $u\in (0,1)$, let $\gamma'=\gamma/(1-u)$ and
  \begin{equation}
    g(\xi) = M\log N(1-u)  +\log\frac{1-u}{u} +\frac{M+1}{N(1-u)} \left(Nu-[Nu] - \frac{1}{2} \right) - \xi.
  \end{equation}
  With the kernel  $\K_N$ in \eqref{transformK},  we have uniformly for $\xi, \eta$ in a compact subset of $\realR$
  \begin{equation}
    \lim_{N \to \infty}  e^{(g(\xi) -g(\eta) )[Nu]}  \K_N \big(g(\xi) , g(\eta) \big) = K^{(\bulk)}_{\crit}(\xi, \eta; \gamma'),
  \end{equation}
  where
  \begin{equation} \label{bulkcrit}
    K^{(\bulk)}_{\crit}(\xi, \eta; \gamma') = \frac{1}{\sqrt{8 \pi \gamma'}}\int^{1}_{-1} dw \,  e^{\frac{1}{2\gamma'}(\pi w-i\eta)^2}  \vartheta\left(  \frac{1}{2\pi}(\pi w-i\xi);  \frac{i }{2\pi}\gamma' \right).
  \end{equation}
\end{thm}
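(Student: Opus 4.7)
The plan is to carry out a steepest-descent analysis of the contour-integral representation \eqref{transformK} at a bulk point, in the same spirit as the proof of Theorem \ref{thm:crit} but now centered inside the spectrum. I would begin by substituting $t = -[Nu] + \tau$ and $s = -[Nu] + \sigma$, so that the prescribed conjugation factor $e^{(g(\xi)-g(\eta))[Nu]} = e^{(\eta-\xi)[Nu]}$ exactly cancels the unwanted $[Nu]$-dependence in $e^{xt - ys}$, leaving $e^{g(\xi)\tau - g(\eta)\sigma}$. In the shifted variables the $\tau$-contour encircles the integer set $\{[Nu]-N+1,\dotsc,[Nu]\}$ and the $\sigma$-contour is a vertical line far to the right of it.

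The next step is to invoke the Euler reflection formula to write
\[
\frac{\Gamma(\tau-[Nu])}{\Gamma(\sigma-[Nu])} = \frac{\sin(\pi\sigma)}{\sin(\pi\tau)}\cdot\frac{\Gamma([Nu]+1-\sigma)}{\Gamma([Nu]+1-\tau)},
\]
so that all the remaining Gamma ratios involve arguments of order $N$ and Stirling's formula \eqref{stirling} applies uniformly for $\sigma,\tau$ in a compact set. A direct bookkeeping based on $\psi(N(1-u)+w) = \log(N(1-u)) + (w-1/2)/(N(1-u)) + O(N^{-2})$ shows that the coefficient of the candidate linear drift $(\tau-\sigma)$ in the exponent reduces to $\log([Nu]/(Nu)) = O(1/N)$; this cancellation is precisely the reason the constants $\log\frac{1-u}{u}$ and $\frac{M+1}{N(1-u)}(\{Nu\}-\tfrac{1}{2})$ were built into $g(\xi)$. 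The only non-trivial quadratic term surviving comes from the $(M+1)$-factor and reads $(M+1)(\sigma^{2}-\tau^{2})/(2N(1-u))\to\gamma'(\sigma^{2}-\tau^{2})/2$. Together with global tail estimates of the type developed in Section \ref{sec:proofs}, which control both contours outside a large compact box and allow truncation, this yields the limiting double-contour representation
\[
\lim_{N\to\infty} e^{(g(\xi)-g(\eta))[Nu]}\K_{N}(g(\xi),g(\eta)) = \int\frac{d\sigma}{2\pi i}\oint\frac{d\tau}{2\pi i}\,\frac{1}{\sigma-\tau}\,\frac{\sin(\pi\sigma)}{\sin(\pi\tau)}\,\frac{e^{\gamma'\sigma^{2}/2+\eta\sigma}}{e^{\gamma'\tau^{2}/2+\xi\tau}}.
\]

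Finally, I would match this representation with the theta-function formula \eqref{bulkcrit}. The Gaussian damping $e^{-\gamma'\tau^{2}/2}$ ensures the residue sum converges, so in the limit the $\tau$-contour may be collapsed onto all of $\mathbb{Z}$: using $\mathrm{Res}_{\tau=n}\frac{1}{\sin(\pi\tau)} = (-1)^{n}/\pi$ gives
\[
\oint\frac{d\tau}{2\pi i}\,\frac{1}{\sigma-\tau}\,\frac{e^{-\gamma'\tau^{2}/2-\xi\tau}}{\sin(\pi\tau)} = \sum_{n\in\mathbb{Z}}\frac{(-1)^{n}e^{-\gamma' n^{2}/2-\xi n}}{\pi(\sigma-n)}.
\]
Then, using $\sin(\pi\sigma)=(-1)^{n}\sin(\pi(\sigma-n))$, shifting $\sigma\mapsto n+\sigma'$, and writing $\sin(\pi\sigma')/\sigma' = \tfrac{\pi}{2}\int_{-1}^{1}e^{i\pi w\sigma'}\,dw$ reduces the remaining $\sigma$-integral to a Gaussian that is evaluated explicitly by completion of the square. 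Collecting the $n$-dependent Gaussian factors produces precisely the theta series $\vartheta\bigl((\pi w-i\xi)/(2\pi);\, i\gamma'/(2\pi)\bigr)$, and the remaining prefactors combine into $(8\pi\gamma')^{-1/2}\exp\!\bigl((\pi w-i\eta)^{2}/(2\gamma')\bigr)$, yielding \eqref{bulkcrit}.

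The principal technical obstacle, as in the edge case treated in Section \ref{subsec:proofs_1.1_1.2}, is the uniform control of the Stirling remainder together with the various tail contributions: one must produce explicit global bounds, in the spirit of Global Estimates A and B, justifying that the $\tau$-contour can be deformed into a compact arc encircling only the integers near the origin, plus a far tail of negligible contribution, and that the vertical $\sigma$-contour decays sufficiently fast for dominated convergence to apply. A secondary delicate point is the interchange of the $N\to\infty$ limit with the infinite residue sum: the finite window $\{[Nu]-N+1,\dotsc,[Nu]\}$ of enclosed integers widens to $\mathbb{Z}$ simultaneously with the pointwise convergence of the integrand, and one must verify that the difference between the exact finite sum and its limiting theta series tends to zero.
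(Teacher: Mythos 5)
Your proposal is correct and reaches \eqref{bulkcrit} by a route that shares the paper's main ingredients (the shift by $[Nu]$, Euler's reflection formula to bring the Gamma factors into the Stirling regime, and the bookkeeping that isolates the Gaussian $\tfrac{\gamma'}{2}(\sigma^{2}-\tau^{2})$ and cancels the linear drift built into $g$) but differs in one structural step. Before shifting, the paper applies the further identity $\tfrac{\sin\pi s}{\sin\pi t}=\tfrac{\sin\pi(s-t)}{\sin\pi t}\,e^{i\pi t}+e^{-i\pi(s-t)}$ and drops the second summand, whose $t$-integral over $\Sigma$ vanishes; this converts the Cauchy factor $\tfrac{1}{s-t}$ into the \emph{entire} function $\tfrac{\sin\pi(s-t)}{s-t}$, removes the pole at $s=t$, and lets the paper use a single rectangular $t$-contour that passes freely through the vertical $s$-line, after which it performs the sinc integral $\tfrac{\sin\pi(s-t)}{s-t}=\tfrac{\pi}{2}\int_{-1}^{1}e^{-i\pi(s-t)w}\,dw$, the $s$-Gaussian, and finally the $t$-residue sum, in that order. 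You instead keep $\tfrac{1}{\sigma-\tau}$, so the $\tau$-contour has to remain a union of small disjoint circles around $\{[Nu]-N+1,\dots,[Nu]\}$, disjoint from the $\sigma$-line; you sum the $\tau$-residues first, shift $\sigma\mapsto n+\sigma'$ term by term, and apply the sinc representation and Gaussian at the end. I checked that your residue-first calculation does reproduce $\tfrac{1}{\sqrt{8\pi\gamma'}}\int_{-1}^{1}e^{(\pi w-i\eta)^{2}/(2\gamma')}\,\vartheta\bigl(\tfrac{\pi w-i\xi}{2\pi};\tfrac{i\gamma'}{2\pi}\bigr)\,dw$, so the two routes agree. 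The paper's extra identity buys tail estimates on a single smooth contour with no singular factor to worry about; your route costs control of a growing discrete family of residues and the interchange of the residue sum with the $N\to\infty$ limit (a point you flag, and which is routine thanks to the super-exponential weight $e^{-\gamma'n^{2}/2}$), plus one small implicit step worth stating explicitly: pushing the vertical $\sigma$-line from $\Re\sigma\approx[Nu]$ down to a fixed position such as $\Re\sigma=\tfrac{1}{2}$ crosses the pole $s=t$, but the residue there is $e^{(g(\xi)-g(\eta))t}$, entire in $t$, so its closed-loop $t$-integral vanishes and the deformation is free. With those two points handled, your proposal is a valid alternative to the paper's argument.
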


\begin{proof}
  Use the  Euler's reflection formula  for the gamma function and the identity
  \begin{equation}
    \frac{\sin \pi s}{\sin \pi t}= \frac{\sin \pi (s-t)}{\sin \pi t} e^{i\pi t}+e^{-i\pi (s-t)},
  \end{equation}
  we rewrite $\K_N$ in \eqref{transformK}  as
  \begin{equation}
    \begin{split}
      \K_{N}(x, y) = {}& \int^{c + i\infty}_{c - i\infty} \frac{ds}{2\pi i} \oint_{\Sigma} \frac{dt}{2 \pi i} \frac{ e^{xt-ys} }{s-t} \frac{\Gamma(1-s)}{\Gamma(1-t)}  \left( \frac{\Gamma(s+N)}{\Gamma(t+N)}\right)^{M+1}  \left( \frac{\sin \pi (s-t)}{\sin \pi t} e^{i\pi t}+\frac{e^{i\pi t}}{e^{i\pi s}}  \right) \\
      = {}& \int^{c + i\infty}_{c - i\infty} \frac{ds}{2\pi i} \oint_{\Sigma} \frac{dt}{2 \pi i} \frac{  \sin \pi (s-t)}{s-t} \frac{e^{i\pi t}}{\sin \pi t}  \frac{e^{-ys}  \Gamma(1-s)}{e^{-xt} \Gamma(1-t)}  \left( \frac{\Gamma(s+N)}{\Gamma(t+N)}\right)^{M+1},
    \end{split}
  \end{equation}
  because the poles with respect to $t$ within $\Sigma$ only occur as zeros of $\sin \pi t$.

  Make change of variables $s \to s-[Nu]$, $t \to t-[Nu]$ and deform the contours accordingly, we obtain
  \begin{equation} \label{bulkeqn1}
    \K_{N}(g(\xi), g(\eta)) = e^{(g(\eta) - g(\xi)) [Nu]} \int^{ i\infty}_{-i\infty} \frac{ds}{2\pi i} \oint_{\Sigma_{\Box}} \frac{dt}{2 \pi i} \frac{ \sin \pi (s-t)}{s-t}
    \frac{e^{i\pi t}}{\sin \pi t}  \frac{e^{f_{N}(\eta,s)}}{e^{f_{N}(\xi,t)}},
  \end{equation}
  where
  \begin{equation}
    f_{N}(\xi,t)= - t g(\xi) +\log \frac{\Gamma(1+[Nu]-t)}{\Gamma(1+[Nu])} + (M+1)\log \frac{\Gamma(N-[Nu]+t)}{\Gamma(N-[Nu])},
  \end{equation}
  and $\Sigma_{\Box}$ is a rectangular contour with four vertexes $\frac{1}{2}+[Nu]\pm \frac{i}{2}$ and $-N+[Nu]+\frac{1}{2}\pm \frac{i}{2}$.

  For $t \in B(0, N^{1/4})$ and $\xi$ in a compact subset of $\realR$, using estimate \eqref{stirling} and \eqref{digammaa}, we have uniformly
  \begin{equation} \label{bulkeqn}
    f_{N}(\xi,t) = \frac{1}{2}(\gamma' +o(1)) t^2 -\xi t +\bigO(N^{-1/4}).
  \end{equation}
  So as $s \in   i\mathbb{R} $ and $t \in \Sigma_{\Box}$ and $s, t \in B(0, N^{1/4})$, noting that $f_{N}(\xi,0)$ is a constant independent of $\xi$ that
  \begin{equation}
    \frac{e^{f_{N}(\eta,s)}}{e^{f_{N}(\xi,t)}} = \frac{e^{\frac{1}{2}(\gamma' +o(1))s^2 - \eta s}}{e^{\frac{1}{2}(\gamma' +o(1))t^2 - \xi t}} (1 + \bigO(N^{-1/4}).
  \end{equation}

  Next, we estimate the integrand when either $s$ or $t$ is not in $B(0, N^{1/4})$. For $s = iy$, like \eqref{eq:estimate_F'},
  \begin{equation} \label{eq:1st_dev_bulk}
    \begin{split}
      \frac{d}{dy} \Re f_{N}(\eta,iy) = {}& \Im \psi(1+[Nu]- iy) -\Im \big\{(M+1)\psi(N-[Nu]+iy)\big\} \\
      = {}& - \Big(\arctan \frac{y}{1+[Nu]}+(M+1)\arctan\frac{y}{N-[Nu]}\Big) \Big(1 + \bigO(N^{-1})\Big).
    \end{split}
  \end{equation}
  Thus, there exists $\epsilon > 0$ such that
  \begin{equation}
    \Re \{f_{N}(\eta,iy) \}\leq  - \epsilon N^{1/4} (|y| - N^{1/4}), \quad  |y| \geq N^{1/4}.
  \end{equation}
  Then it dominates the factor $\sin \pi (s-t)$ as $s \to \infty$ on the vertical contour.

  On the other hand,  as $t$ moves to the right endpoint along $\{x\pm \frac{i}{2}: x\in [-N+[Nu]+\frac{1}{2}, -N^{1/4}]\}$ or to the left endpoint along $\{x\pm \frac{i}{2}: x\in [N^{1/4},[Nu]+\frac{1}{2}]\}$,  $\Re f_N(\eta,t)$ decreases monotonically. To see it, we check that on these horizontal contours, the second derivative
  \begin{equation}
    \begin{split}
      & \frac{d^2}{dx^2} \Re f_{N}(\xi, x \pm \frac{i}{2}) \\
      = {}& \Re \big\{ \psi'(1+[Nu]-(x \pm \frac{i}{2}))+(M+1)  \psi'(N-[Nu]+(x \pm \frac{i}{2}))\big\} \\
      = {}& \sum^{\infty}_{n = 0}\bigg( \frac{(n +1+[Nu]-x)^2 - \frac{1}{4}}{((n + 1+[Nu] -x)^2 + \frac{1}{4})^2} + (M+1)\frac{(n +N-[Nu]+x)^2 - \frac{1}{4}}{((n +N-[Nu] +x)^2 + \frac{1}{4})^2} \bigg)>0,
    \end{split}
  \end{equation}
  and the first derivative of $\Re f_N(\xi, x \pm \frac{i}{2})$ satisfies, by arguments as in \eqref{eq:1st_dev_bulk},
  \begin{equation}
    \left. \frac{d}{dx} \Re f_{N}\big(\xi, x \pm \frac{i}{2}\big) \right\rvert_{x = N^{1/4}} > 0,  \quad  \left. \frac{d}{dx} \Re f_{N}\big(\xi, x \pm \frac{i}{2}\big) \right\rvert_{x = -N^{1/4}} < 0.
  \end{equation}
  So $\Re f_{N}(\xi, x \pm \frac{i}{2})$ increases monotonically on these horizontal contours to the endpoints $\pm N^{1/4}\pm \frac{i}{2}$.

  At last, on the two vertical lines of $\Sigma_{\Box}$, applying the Stirling formula leads to  for any $-1/2\leq y\leq 1/2$
  \begin{equation}
    \Re\big\{ f_{N}(\xi, -N+[Nu]+\frac{1}{2}+iy)\big\} = (M+1)N(1-u)+ \bigO \big(N\log N\big),
  \end{equation}
  and
  \begin{equation}
    \Re\big\{ f_{N}(\xi, [Nu]+\frac{1}{2}+iy)\big\} = (M+1)N(-u-\log(1-u))+ \bigO \big(N\log N\big).
  \end{equation}

  Combine these estimates and we know that the double integral \eqref{bulkeqn1} concentrates on the region of $s, t \in B(0, N^{1/4})$, that is, like \eqref{eq:limit_kernel_crit},
  \begin{multline} \label{eq:intermediate_bulk}
    e^{(g(\xi) -g(\eta) )[Nu]}  \
      \K_{N}(g(\xi), g(\eta)) = \left(1 + \bigO(N^{-1/4})\right) \times \\
 \left( \int^{ iN^{1/4}}_{-iN^{1/4}} \frac{ds}{2\pi i}
      \int^{ -N^{1/4}+\frac{i}{2}}_{N^{1/4}+\frac{i}{2}} \frac{dt}{2\pi i}
      +\int^{ iN^{1/4}}_{-iN^{1/4}} \frac{ds}{2\pi i}
      \int^{ N^{1/4}-\frac{i}{2}}_{ -N^{1/4}-\frac{i}{2}} \frac{dt}{2\pi i} \right)  \frac{ \sin \pi (s-t)}{s-t}
    \frac{e^{i\pi t}}{\sin \pi t}  \frac{e^{\frac{\gamma'}{2} s^2 - \eta s}}{e^{\frac{\gamma'}{2} t^2 - \xi t}}.
  \end{multline}
  This further gives us
  \begin{multline} \label{bulkint2line}
    \lim_{N \to \infty}  e^{(g(\xi) -g(\eta) )[Nu]}  \K_N \big(g(\xi) , g(\eta) \big) = \\
    \left( \int^{ i\infty}_{-i\infty} \frac{ds}{2\pi i}
      \int^{ \frac{i}{2}-\infty}_{\frac{i}{2}+\infty}\frac{dt}{2\pi i} +\int^{ i\infty}_{-i\infty} \frac{ds}{2\pi i}
      \int^{ -\frac{i}{2}+\infty}_{-\frac{i}{2}-\infty}\frac{dt}{2\pi i}    \right)  \frac{ \sin \pi (s-t)}{s-t}
    \frac{e^{i\pi t}}{\sin \pi t}  \frac{e^{\frac{\gamma'}{2} s^2 - \eta s}}{e^{\frac{\gamma'}{2} t^2 - \xi t}}.
  \end{multline}

  Use first the simple fact that
  \begin{equation}
    \frac{\sin \pi(s-t)}{s-t}=\frac{\pi}{2}\int_{-1}^{1} dw\, e^{-i\pi (s-t)w},
  \end{equation}
  and then integrate out variable $s$, we simplify the right-hand side of \eqref{bulkint2line} into
  \begin{equation}   \label{thetakernelint}
    \sqrt{\frac{\pi}{8 \gamma'}} \int^{1}_{-1} dw\,  e^{\frac{(\pi w-i\eta)^2}{2\gamma'} }
    \left[ \left(  \int^{ -\frac{i}{2}+\infty}_{-\frac{i}{2}-\infty}\frac{dt}{2\pi i}
      -\int^{ \frac{i}{2}+\infty}_{\frac{i}{2}-\infty}
      \frac{dt}{2\pi i}    \right)
    \frac{e^{i\pi t}}{\sin \pi t}  e^{-\frac{\gamma'}{2} t^2 + (\xi + i\pi w) t} \right].
  \end{equation}
  Then using the residue theorem to evaluate the inner integral with respect to $t$ in \eqref{thetakernelint}, where the poles are $z \in \intZ$, we simplify it into $\frac{1}{\pi} \vartheta\left(  \frac{1}{2\pi}(\pi w-i\xi);  \frac{i }{2\pi}\gamma' \right)$.  We thus complete the proof.
\end{proof}

\begin{rem}
  Our expression form for the critical limit in the bulk \eqref{bulkcrit} is actually the same as that in \cite[Equation (13)]{Akemann-Burda-Kieburg18}, just by noting that the summation in \cite[Equation (13)]{Akemann-Burda-Kieburg18} can simplify to an integral in terms of the Jacobi theta function. Besides, as $\gamma' \to 0$ one can recover the sine kernel; see \cite[Equation (16)]{Akemann-Burda-Kieburg18}.
\end{rem}

\subsection{Transition from critical kernels}

According to  the meaning of the parameter $\gamma$  as a limit of the ratio  $M/N$ and the  main results displayed in Theorems  \ref{cor:normality}  and \ref{thm1.3},  we expect  to observe  the Tracy-Widom phenomenon (Airy kernel) as $\gamma\to 0$ and the Gaussian phenomenon as  $\gamma \to  \infty$. Indeed, we have
\begin{thm} \label{thm1.4}
The following hold  uniformly for $x, y$ in a compact subset of $\mathbb{R}$:
  \begin{equation} \label{tog}
    \lim_{\gamma \to \infty}  \sqrt{\gamma} K_{\mathrm{crit}}\left( \sqrt{\gamma}x, \sqrt{\gamma}y;\gamma \right)  = \frac{1}{\sqrt{2\pi}}e^{-\frac{1}{2}y^2}
 %\lim_{\gamma \to \infty} e^{x - y} \sqrt{\gamma} \widehat{K}_{\mathrm{crit}}\left( \sqrt{\gamma}(x - 1),\sqrt{\gamma}(y - 1);\gamma \right)
     \end{equation}
  and with $k = k(\gamma) = 2^{-\frac{1}{3}} \gamma^{\frac{2}{3}}$ and  $t_0$ being the unique positive solution of $\psi'(t_0)=\gamma$,
   \begin{equation} \label{toa}
       \lim_{\gamma \to 0}  k e^{ k t_0(y - x)}
      K_{\crit}(\gamma t_0 - \psi(t_0) + kx, \gamma t_0 - \psi(t_0) + ky;\gamma)= K_{\mathrm{Airy}}(x,y).
  \end{equation}
%  \begin{equation} \label{toa}
%    \begin{split}
%      & \lim_{\gamma \to 0} 2^{-\frac{1}{3}} \gamma^{\frac{2}{3}} e^{2^{-\frac{1}{3}} \gamma^{\frac{2}{3}} t_0(y - x)}  K_{\crit}(\gamma t_0 - \psi(t_0) + 2^{-\frac{1}{3}} \gamma^{\frac{2}{3}}x, \gamma t_0 - \psi(t_0) + 2^{-\frac{1}{3}} \gamma^{\frac{2}{3}}y; \gamma)= K_{\mathrm{Airy}}(x,y). \\
%     % = {}& \lim_{\gamma \to 0} k\widehat{K}_{\crit}(-\psi(t_0) + kx, -\psi(t_0) + ky; \gamma)
%    \end{split}
%  \end{equation}
%  Here we note that $\gamma t_0 = 1 + \bigO(\gamma)$ and $\psi(t_0) = \log(t_0) + \bigO(\gamma)$ by \eqref{eq:behaviour_t_0} and \eqref{digammaa}.
  \end{thm}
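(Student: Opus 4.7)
For the Gaussian limit \eqref{tog}, the plan is to rescale $s = \sigma/\sqrt{\gamma}$ and $t = \tau/\sqrt{\gamma}$ inside \eqref{softlimitK} with arguments $\sqrt{\gamma}\, x$ and $\sqrt{\gamma}\, y$. Under this change of variables, the quadratic exponents become $\sigma^2/2 - y\sigma$ and $\tau^2/2 - x\tau$, while the prefactors $\sqrt{\gamma}$, $ds\, dt$ and $1/(s-t)$ combine to an overall factor of $1$; meanwhile $\Gamma(\tau/\sqrt{\gamma})/\Gamma(\sigma/\sqrt{\gamma}) \to \sigma/\tau$ by the expansion $\Gamma(z) = 1/z + \bigO(1)$ near $z=0$. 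After deforming the image of $\{\Re s = 1\}$ to a fixed vertical line $\{\Re \sigma = c\}$ with $c > 0$ and the image of $\Sigma_{-\infty}$ to a small positively oriented loop around $\sigma=0$, the integrand converges pointwise to the one appearing on the right-hand side of \eqref{I1asym}, whose value has already been computed in the paper to equal $e^{-y^2/2}/\sqrt{2\pi}$.

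For the Airy limit \eqref{toa}, the plan is to first pass to the alternative integrable form \eqref{eq:two_crit_rel}--\eqref{M=Nhat}. With $\xi = \gamma t_0 - \psi(t_0) + kx$ and $\eta = \gamma t_0 - \psi(t_0) + ky$, the prefactor $e^{kt_0(y-x)}$ exactly cancels the $e^{(\xi-\eta)t_0}$ in \eqref{eq:two_crit_rel}, reducing the claim to
\begin{equation*}
  k\, \widehat{K}_{\crit}\bigl(-\psi(t_0) + kx,\, -\psi(t_0) + ky;\, \gamma\bigr) \longrightarrow K_{\Airy}(x,y).
\end{equation*}
The phase $\Phi(t) := \log\Gamma(t+t_0) - \tfrac{1}{2}\gamma t^2$ is, by the defining property $\psi'(t_0) = \gamma$, critical to second order at $t=0$: one has $\Phi'(0) = \psi(t_0)$, $\Phi''(0) = \psi'(t_0) - \gamma = 0$, and $\Phi'''(0) = \psi''(t_0)$. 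The shift $-\psi(t_0)$ absorbs the linear term, leaving $\Phi(t) + (-\psi(t_0) + kx)\, t = \Phi(0) + kxt + \tfrac{1}{6}\psi''(t_0)\, t^3 + \bigO(t^4)$. Using \eqref{eq:behaviour_t_0} together with $\psi''(z) = -z^{-2} + \bigO(z^{-3})$ gives $-\psi''(t_0)/2 = \gamma^2/2 + \bigO(\gamma^3)$, so that $k = 2^{-1/3}\gamma^{2/3}$ matches to leading order the natural Airy scale $(-\psi''(t_0)/2)^{1/3}$. Substituting $s = k^{-1}\sigma$ and $t = k^{-1}\tau$, the cubic term becomes $-\tau^3/3$, the Jacobians and $1/(s-t)$ combine with the $k$ in front to yield unity, and in the limit $\gamma \to 0$ one recovers the double-contour representation \eqref{eq:Airy} of $K_{\Airy}(x,y)$, after deforming the rescaled contours to $\mathcal{C}^\infty_<$ and $\Sigma^\infty_>$.

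The main technical obstacle in both cases is justifying the passage to the limit on the unbounded contours. For \eqref{tog}, one needs that the portion of the $\sigma$-integral with $|\Im \sigma|$ large contributes negligibly uniformly in $\gamma$; this follows from the steepest-descent estimate on $\Re(\gamma s^2/2 - \sqrt{\gamma}\, ys)$ along $\{\Re s = 1\}$, combined with Stirling-type control on $1/\Gamma(s)$. The Airy case \eqref{toa} is more delicate, because $t_0 \to +\infty$ as $\gamma \to 0$, so the poles $\{-t_0, -t_0 - 1, \dotsc\}$ enclosed by $\widehat{\Sigma}_{-\infty}$ drift off to $-\infty$. My plan is to deform $\widehat{\Sigma}_{-\infty}$ into the union of a local arc inside $B(0,\, k^{-3/10})$ shaped like the Airy contour (with opening angles $\pm 2\pi/3$), together with a global tail escaping to $-\infty$. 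On the local arc the Taylor expansion of $\Phi$ above suffices; on the global tail, Stirling's asymptotics for $\Gamma(t+t_0)$ together with the convexity and monotonicity properties of $\psi$ and $\psi'$ (analogous in spirit to those in Lemma \ref{saddle-t}) yield exponential smallness uniform in $\gamma$. A symmetric analysis on the $s$-contour $\{\Re s = 1\}$ then completes the argument.
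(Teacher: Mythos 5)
Both halves of your proposal take essentially the same route as the paper. For \eqref{tog}, you rescale by $\sqrt{\gamma}$, collapse the $t$-contour to a small loop at the origin (equivalently, take the residue of $\Gamma(\tau/\sqrt{\gamma})$ there), and recognize the remaining $\sigma$-integral as a Gaussian integral whose value $e^{-y^2/2}/\sqrt{2\pi}$ the paper has already recorded in \eqref{I1asym}; the paper's own proof does exactly this, rewriting the residue contribution as $\int_{\frac{1}{2}-i\infty}^{\frac{1}{2}+i\infty}\frac{e^{s^2/2-ys}}{\Gamma(s/\sqrt{\gamma}+1)}\frac{ds}{2\pi i}$ and letting $\Gamma(s/\sqrt{\gamma}+1)\to 1$. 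For \eqref{toa}, passing first to the conjugated kernel $\widehat{K}_{\crit}$ via \eqref{eq:two_crit_rel}--\eqref{M=Nhat} is a tidy repackaging of the paper's direct substitution $s,t\mapsto t_0+k^{-1}s,\,t_0+k^{-1}t$; the double-critical-point Taylor expansion of $\log\Gamma(t+t_0)-\tfrac{1}{2}\gamma t^2$, the matching $k=2^{-1/3}\gamma^{2/3}\approx(-\psi''(t_0)/2)^{1/3}$, and the rescaling all coincide with the paper's computation. You even go a step beyond the paper, which flatly omits the contour estimates, in sketching a concrete local/global split of $\widehat{\Sigma}_{-\infty}$.

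The one step that would fail as written is the choice of local radius $k^{-3/10}$. Recall that $k\to 0$ as $\gamma\to 0$, and in the $t$-variable of $\widehat{K}_{\crit}$ the Airy scale is $1/k$ (the rescaled variable is $\tau = kt$). A ball of $t$-radius $R$ becomes, after rescaling, a ball of $\tau$-radius $kR$: you need $kR\to\infty$ so that the rescaled local arc expands to fill out $\Sigma^{\infty}_>$ and $\mathcal{C}^{\infty}_<$, while keeping $R$ small enough that the quartic remainder $\sim|\psi'''(t_0)|R^4\sim t_0^{-3}R^4\sim k^{9/2}R^4$ vanishes. With $R=k^{-3/10}$ one has $kR=k^{7/10}\to 0$: the rescaled arc shrinks to a point and cannot converge to the Airy contours. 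Any $R$ with $k^{-1}\ll R\ll k^{-9/8}$ works, e.g. $R=k^{-17/16}$. The exponent $3/10$ was likely lifted from the Airy analysis of Theorem~\ref{thm1.3} in Section~\ref{sect2.2}, but there the scale $(2(M+1)/N)^{1/3}$ tends to $0$, so that a ball of radius $(2(M+1)/N)^{3/10}$ (smaller exponent, hence larger quantity) is indeed wider than the scale; here the scale $1/k$ tends to $\infty$, and the inequality reverses.
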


\begin{proof}[Sketch of proof]
  As to \eqref{tog}, after change of variables  $s \mapsto s/\sqrt{\gamma}$ and $t \mapsto t/\sqrt{\gamma}$ the kernel \eqref{softlimitK} becomes
  \begin{equation} \label{tog2}
    \sqrt{\gamma} K_{\crit} \big(\sqrt{\gamma}x, \sqrt{\gamma}y;\gamma\big) = \int^{\frac{1}{2} + i\infty}_{\frac{1}{2} - i\infty} \frac{ds}{2\pi i} \oint_{\Sigma'_{-\infty} \cup \Sigma_0} \frac{dt}{2 \pi i} \frac{1}{s-t} \frac{\Gamma(\frac{t}{\sqrt{\gamma}})}{\Gamma(\frac{s}{\sqrt{\gamma}})} \frac{ e^{ \frac{1}{2} s^{2} - y s}}{e^{ \frac{1}{2} t^{2} - x t}}.
  \end{equation}
  Here $\Sigma'_{-\infty} $ denotes a counterclockwise contour consisting of two rays and one line segment:  $\{ x - i \mid -\infty < x \leq -\sqrt{\gamma}/2 \} \cup \{ x + i \mid -\infty < x \leq -\sqrt{\gamma}/2 \} \cup \{ -\sqrt{\gamma}/2 + iy \mid -1 \leq y \leq 1 \}$, while $\Sigma_0$ is a small circle around the origin. As $\gamma \to +\infty$, the contour $\Sigma'_{-\infty}$ shrinks to $-\infty$, and we see that the integral over $\Sigma_0$ with respect to $t$ dominates that over $\Sigma'_{-\infty}$. With $\Sigma'_{-\infty} \cup \Sigma_1$ replaced by $\Sigma_1$, we see that the double contour integral becomes
  \begin{equation} \label{eq:transit_to_gaussian}
    \sqrt{\gamma} \int^{\frac{1}{2} + i\infty}_{\frac{1}{2} - i\infty} \frac{ds}{2\pi i} \frac{e^{\frac{1}{2} s^2 - ys}}{s\Gamma(\frac{s}{\sqrt{\gamma}})} = \int^{\frac{1}{2} + i\infty}_{\frac{1}{2} - i\infty} \frac{ds}{2\pi i} \frac{e^{\frac{1}{2} s^2 - ys}}{\Gamma(\frac{s}{\sqrt{\gamma}} + 1)},
  \end{equation}
  and as $\gamma \to +\infty$ we have that the integral  on the right-hand side of  \eqref{eq:transit_to_gaussian} converges to $ e^{-\frac{1}{2} y^2}/\sqrt{2\pi}$, so we prove the convergence of $\sqrt{\gamma} K_{\mathrm{crit}}\left( \sqrt{\gamma}x, \sqrt{\gamma}y;\gamma \right)$.

%  By \eqref{eq:two_crit_rel} and \eqref{eq:behaviour_t_0}, as $\gamma \to +\infty$, we have that $\widehat{K}_{\crit}(\sqrt{\gamma}(x - 1), \sqrt{\gamma}(y - 1); \gamma)$ coincides with $e^{(y - x)}K_{\crit}(\sqrt{\gamma}x, \sqrt{\gamma}y; \gamma)$, the convergence of $e^{x - y} \sqrt{\gamma} K_{\mathrm{crit}}(\sqrt{\gamma}(x - 1), \sqrt{\gamma}(y - 1);\gamma)$ is also obtained.

  Next, we turn to \eqref{toa}. Change $s,t$ to $t_0 + k^{-1} s, t_0 + k^{-1} t$ and we rewrite \eqref{softlimitK} as
  \begin{equation} \label{toa2}
    k K_{\crit}(\gamma t_0 - \psi(t_0) + kx, \gamma t_0 - \psi(t_0) + ky; \gamma) = \int_{\mathcal{C}^{\infty}_<} \frac{ds}{2\pi i} \int_{\Sigma^{\infty}_> } \frac{dt}{2 \pi i} \frac{1}{s-t}  e^{ xt-ys} e^{f_{\gamma}(s)-f_{\gamma}(t)},
  \end{equation}
  where
  \begin{equation}
    f_{\gamma}(t) = \log\Gamma(t_0 + k^{-1} t) + \frac{\gamma t_0 - \psi(t_0) - \gamma t_0}{k}t - \frac{\gamma}{2k^2}t^2,
  \end{equation}
  and the contours $\Sigma^{\infty}_>$ and $\mathcal{C}^{\infty}_<$ are defined in \eqref{eq:defn_Airy_contours}. We note that $\mathcal{C}^{\infty}_<$ is a ``bent'' version of the vertical contour for $s$ in \eqref{softlimitK}, while $\Sigma^{\infty}_>$ ``opens'' $\Sigma_{-\infty}$ in \eqref{softlimitK}.

  By direct calculation we have $f'_{\gamma}(0) = f''_{\gamma}(0) = 0$ and, using \eqref{eq:behaviour_t_0} we have
  \begin{equation}
    f'''_{\gamma}(0) = k^{-3} \psi''(t_0) = k^{-3} \left(-t_{0}^{-2} + \bigO(t^{-3}_0) \right) = -2 + \bigO(\gamma).
  \end{equation}
  Furthermore, $f^{(4)}_{\gamma}(z)$ converges to $0$ pointwise as $\gamma \to 0$. So as $\gamma \to 0$, the integrand of the double contour integral on the right-hand side of \eqref{toa2} converges pointwise to
  \begin{equation}
    \frac{1}{s - t} \frac{e^{\frac{s^3}{3} - ys}}{e^{\frac{t^3}{3} - xt}},
  \end{equation}
  and if we plug this pointwise limit to the double contour integral formula, we have the Airy kernel on the right-hand side of \eqref{toa}. Hence the limit of $k e^{k t_0(y - x)}  K_{\crit}(\gamma t_0 - \psi(t_0) + kx, \gamma t_0 - \psi(t_0) + ky; \gamma)$ is formally proved.
 % , and the limit of $k\widehat{K}_{\crit}(-\log t_{0}+ kx, -\log t_{0}+ ky; \gamma)$ follows by \eqref{eq:two_crit_rel} and \eqref{eq:behaviour_t_0}.
  A complete proof requires estimates of $f_{\gamma}(t)$ on $\Sigma^{\infty}_>$  and $f_{\gamma}(s)$ on $\Sigma^{\infty}_<$ , and we omit the details.
\end{proof}

Similar to the transition from the Airy kernel to the sine kernel,  we also observe  a transition from   the critical  kernel  at the soft edge $K_{\mathrm{crit}}$ defined in \eqref{softlimitK} to the critical bulk kernel  $K^{(\mathrm{bulk})}_{\crit}$ in \eqref{bulkcrit}.
\begin{thm} \label{thm1.5}
  Given   a positive integer $k$, we have
  \begin{equation}
    \lim_{k \to \infty}  e^{k(x-y) }   K_{\crit}(-\gamma k-\log k+x, -\gamma k-\log k+y;\gamma) = K^{(\bulk)}_{\crit}(x,y; \gamma),
  \end{equation}
  uniformly for $x, y$ in a compact subset of $\realR$.
\end{thm}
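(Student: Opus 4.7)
The plan is to transport the soft-edge critical kernel $K_{\crit}(\cdot,\cdot;\gamma)$, evaluated in a window centered at $-\gamma k - \log k$, into the integral representation \eqref{bulkint2line} of $K^{(\bulk)}_{\crit}$ that was established in the proof of Theorem \ref{thm:bulkcrit}. The two main ingredients are Euler's reflection formula (which produces the $\sin\pi(s-t)e^{i\pi t}/\sin\pi t$ structure appearing in \eqref{bulkint2line}) and the integrality of $k$ (which ensures that the various $(-1)^k$ factors pair up and cancel). Concretely, I would first apply $\Gamma(t)\sin\pi t = \pi/\Gamma(1-t)$ together with the identity $\sin\pi s/\sin\pi t = \big(\sin\pi(s-t)/\sin\pi t\big)e^{i\pi t} + e^{-i\pi(s-t)}$ to split the integrand of $K_{\crit}$ into two pieces. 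The piece containing $e^{-i\pi(s-t)}$ has no poles in $t$ inside $\Sigma_{-\infty}$ (its only $t$-singularity is $t=s$, which lies outside the contour), and the Gaussian factor $e^{-\gamma t^2/2}$ produces rapid decay as $\Re t \to -\infty$; closing the $t$-contour at $-\infty$ and applying Cauchy's theorem shows this piece vanishes, leaving only the other piece, already in the shape of \eqref{bulkint2line}.

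Next I would shift $s \mapsto s - k$, $t \mapsto t - k$ with $X = -\gamma k - \log k + x$ and $Y = -\gamma k - \log k + y$. Since $k \in \natN$, $\sin\pi(t-k) = (-1)^k\sin\pi t$ and $e^{i\pi(t-k)} = (-1)^k e^{i\pi t}$, and the $(-1)^k$ factors cancel in the ratio $e^{i\pi t}/\sin\pi t$. Computing the remaining exponent reveals a term $(y-x)k$ that is precisely annihilated by the prefactor $e^{k(x-y)}$ in the theorem, together with a factor $e^{(\log k)(s-t)} = k^{s-t}$; the Gamma ratio becomes $\Gamma(k+1-s)/\Gamma(k+1-t)$. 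Stirling's formula \eqref{stirling} then gives, locally uniformly in $s,t$,
$$\frac{\Gamma(k+1-s)}{\Gamma(k+1-t)}\, k^{s-t} = 1 + O(k^{-1}).$$

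The third step is to deform the shifted contours to those of \eqref{bulkint2line}: the $t$-contour outward from one encircling $\{k, k-1, k-2, \dots\}$ to the strip contour ($\Im t = +1/2$ traversed leftward together with $\Im t = -1/2$ traversed rightward), and the $s$-contour leftward from a vertical line slightly to the right of $\Re s = k+1$ to the imaginary axis. Three residue checks are needed: at the newly enclosed integers $k+1, k+2, \dots$, the factor $\sin\pi t \cdot \Gamma(k+1-t)$ has a removable singularity (a simple zero times a simple pole), so no contribution arises; at the pole $s = k+1$ of $\Gamma(k+1-s)$, the residue is a $t$-integral whose integrand is entire in $t$ and decays Gaussianly on the strip contour, hence vanishes by a second Cauchy argument; and the incidental crossings where $s = t$ contribute nothing because the residue of $1/(s-t)$ at $s=t$ picks up the factor $\sin\pi(s-t)\big|_{s=t} = 0$. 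Once the contours are in place, the integrand is dominated by a $k$-independent integrable function, thanks to the Gaussian decay of $|e^{\gamma(s^2-t^2)/2}|$ together with $|\sin\pi t|$ bounded below on $\Im t = \pm 1/2$. Dominated convergence combined with the Stirling asymptotic then yields the claimed limit $K^{(\bulk)}_{\crit}(x,y;\gamma)$, with estimates locally uniform in $(x,y)$.

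The principal difficulty lies in this contour deformation: bookkeeping the residues that are formally encountered during the two simultaneous deformations and verifying that each is genuinely zero. The $s = k+1$ residue calculation is the most delicate piece, but it structurally parallels the vanishing argument for the $e^{-i\pi(s-t)}$ piece of the first step, so one expects it to go through by analogous means.
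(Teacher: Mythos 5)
Your proposal is essentially the same argument the paper gives (the paper's own proof is also only a sketch). Both routes rest on the same four ingredients: Euler reflection applied to both Gamma factors, the identity $\sin\pi s/\sin\pi t=\big(\sin\pi(s-t)/\sin\pi t\big)e^{i\pi t}+e^{-i\pi(s-t)}$ which lets you discard the piece with no $t$-poles inside the contour, the shift $s\mapsto s-k$, $t\mapsto t-k$ under which the $(-1)^k$'s cancel (this is where integrality of $k$ is used), and the Stirling cancellation $\log\Gamma(1+k-t)-\log\Gamma(1+k)=-t\log k+o(1)$ against the $(\log k)t$ produced by the centering $-\gamma k-\log k$, leaving exactly the Gaussian factors of \eqref{bulkint2line}. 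Where you differ is only in execution: you deform both contours to the $k$-independent bulk contours first and then appeal to dominated convergence, whereas the paper (mirroring its proof of Theorem \ref{thm:bulkcrit}) truncates to $B(0,k^{1/4})$ via the estimate $f_k(x,t)-f_k(x,0)=\tfrac12\gamma t^2-xt+\bigO(k^{-1/4})$ and shows the tails are negligible. Both are viable; the one place your route needs care is in exhibiting a $k$-uniform integrable dominator on the infinite bulk contours, which amounts to bounding $\bigl\lvert k^{s-t}\Gamma(1+k-s)/\Gamma(1+k-t)\bigr\rvert$ uniformly in $k$ — this does hold (reflection handles $\Re t>k$ and the Gaussian $e^{-\gamma(\Re t)^2/2}$ dominates the residual factorial growth), but it is not entirely automatic. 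Finally, your three residue checks during the deformation are correct as stated; a cleaner way to dispose of the $s=k+1$ residue is to note that after the split the $t$-integral itself vanishes at $s=k+1$ (there $\sin\pi(s-t)/\sin\pi t$ reduces to $(-1)^k$, the remaining $t$-integrand is entire and decays on the contour closed at $-\infty$), so the apparent pole of the $s$-integrand is in fact removable and no residue is picked up when sliding the $s$-contour leftward.
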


\begin{proof}[Sketch of proof] Let $g(x)=-\gamma k-\log k+x$,
  after the change of variables $s \to s-k$, $t \to t-k$ we obtain
 \begin{equation} \label{eq:edge_to_bulk}
   e^{k(x-y)} K_{\crit}(g(x), g(y)) = \int_{-i\infty}^{ i\infty} \frac{ds}{2\pi i} \oint_{\Sigma_{-\infty,k}} \frac{dt}{2 \pi i} \frac{ \sin \pi (s-t)}{s-t} \frac{e^{i\pi t}}{\sin \pi t}  \frac{e^{f_{k}(y,s)}}{e^{f_{k}(x,t)}},
 \end{equation}
 where
 \begin{equation}
   f_{k}(x,t)=    \log \Gamma(1+k-t)+   \frac{1}{2}\gamma t^2 -(x-\log k)t,
 \end{equation}
 and $\Sigma_{-\infty, k}$ is a counterclockwise contour similar to $\Sigma'_{-\infty}$ \eqref{tog2}, consisting of two rays and one line segment:  $\{ x - i \mid -\infty < x \leq k + \frac{1}{2} \} \cup \{ x + i \mid -\infty < x \leq k + \frac{1}{2} \} \cup \{ k + \frac{1}{2} + iy \mid -1 \leq y \leq 1 \}$.

 In the open ball $B(0, k^{1/4})$, By the Stirling formula we have
 \begin{equation}
   f_{k}(x,t)-f_{k}(x,0) = \frac{1}{2}\gamma t^2 -x t +\bigO (k^{-1/4}),
 \end{equation}
 and by estimating $f_k(t)$ on $\Sigma_{-\infty, k}\! \setminus\! B(0, k^{1/4})$ and $f_k(s)$ on $\{ iy \mid y \in \realR \}\! \setminus\! B(0, k^{1/4})$, we can show that the double contour integral concentrates in the region $s, t \in B(0, k^{1/4})$ (The details of the estimates are omitted). Hence the right-hand side of \eqref{eq:edge_to_bulk} is approximated by the right-hand side of \eqref{eq:intermediate_bulk} with $\gamma'$ replaced by $\gamma$. Then the conclusion is proved by similar arguments \eqref{bulkint2line}-\eqref{thetakernelint} as  in the proof of Theorem \ref{thm:bulkcrit}.
\end{proof}

\subsection{With different sizes}

When each $X_j$ is a complex Ginibre matrix of size $(\nu_j +N) \times (\nu_{j-1} +N)$ with $\nu_0=0$ and $\nu_{1}, \ldots, \nu_M \geq 0$,  the  eigenvalues of $\log\big( \Pi_{M}^{*}\Pi_{M}\big)$ with the product $\Pi_{M}$ in \eqref{Mproduct} also form  a determinantal point process with correlation kernel
\begin{equation} \label{2integral-2}
  \K^{(\nu)}_{N}(x, y) = \int_{c-i\infty}^{c + i\infty} \frac{ds}{2\pi i} \oint_{\Sigma} \frac{dt}{2 \pi i} \frac{e^{xt-ys}  }{s-t} \frac{\Gamma(t)}{\Gamma(s)}  \prod_{j=0}^{M} \frac{\Gamma(s+\nu_j +N)}{\Gamma(t+\nu_j+ N)},
\end{equation}
see \cite{Kuijlaars-Zhang14}. In this general case, as $N$ fixed, all Lyapunov exponents can be expressed as certain time average (with some upper bound for $\nu_j$)
\begin{equation}
  \lambda^{(\nu)}_{k} =  \lim_{M\to \infty} \frac{1}{2(M+1)} \sum_{j=0}^{M}\psi\big(\nu_j+N-k+1\big), \quad k=1, \ldots, N, \label{ipsenlimits}
\end{equation}
whenever the limits exist; see \cite{Ipsen15}.

Our main  results in Section \ref{mainresults}  have straightforward generalizations to the case above with some $\nu_j > 0$ but under certain assumptions on $\nu_j $. Here  we just state and sketch the proof of the critical result  at the soft edge (case \ref{enu:case_2}) that is a generalization of part \ref{enu:thm:crit_1} of  Theorem \ref{thm:crit}. Note that it is unnecessary to assume  the existence of the limits  in \eqref{ipsenlimits} in order to state   the following result.

\begin{thm} \label{thm1.2+}
  Suppose that
  \begin{equation} \label{ratiocond}
    \lim_{N\to\infty} \sum_{j=0}^M \frac{1}{\nu_j+N} =\gamma \in  (0,\infty).
  \end{equation}
  Let
  \begin{equation}
    g^{(\nu)}(\xi) = \sum^M_{j = 0} \left( \log(\nu_j + N) -\frac{1}{2(\nu_j + N)} \right) + \xi,
  \end{equation}
  then uniformly for $\xi,\eta$ in a compact subset of $\realR$ we have
  \begin{equation} \label{critical+limit}
    \lim_{N \to \infty} \K^{(\nu)}_{N} \left( g^{(\nu)}(\xi),  g^{(\nu)}(\eta) \right) = K_{\crit}(\xi, \eta; \gamma),
  \end{equation}
  where $K_{\crit}(\xi, \eta;\gamma)$ is defined in \eqref{softlimitK}.
\end{thm}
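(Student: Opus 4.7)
The plan is to mirror the proof of part \ref{enu:thm:crit_1} of Theorem \ref{thm:crit}, with the single gamma factor $\Gamma(t+N)$ there replaced by the product $\prod_{j=0}^M \Gamma(t+\nu_j+N)$. First I would introduce the phase function
\begin{equation*}
  F^{(\nu)}(t; w) = \Big(w + \sum_{j=0}^M \psi(\nu_j+N)\Big) t - \sum_{j=0}^M\big[\log\Gamma(t+\nu_j+N) - \log\Gamma(\nu_j+N)\big],
\end{equation*}
arranged so that $t=0$ is a saddle with $F^{(\nu)}(0;w)=0$ and $\partial_t F^{(\nu)}(0;w)=w$. Rewriting $\K^{(\nu)}_N(g^{(\nu)}(\xi), g^{(\nu)}(\eta))$ as a double contour integral analogous to \eqref{eq:alt_K_N_crit}, the discrepancy between $g^{(\nu)}(\xi)-\xi$ and $\sum_j\psi(\nu_j+N)$ is of size $\sum_j (\nu_j+N)^{-2} \leq N^{-1}\sum_j (\nu_j+N)^{-1}=O(N^{-1})$ by the hypothesis \eqref{ratiocond} and \eqref{digammaa}, hence absorbs into an $o(1)$ error in the exponent.

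Next I would perform the local Taylor expansion of $F^{(\nu)}(t;\xi)$ at $t=0$ on compact subsets of $\compC\setminus\realR_-$. Using \eqref{stirling},
\begin{equation*}
  \log\Gamma(t+\nu_j+N) - \log\Gamma(\nu_j+N) = \psi(\nu_j+N)\,t + \tfrac{1}{2}\psi'(\nu_j+N)\,t^2 + O\!\left(\frac{|t|^3}{(\nu_j+N)^2}\right).
\end{equation*}
Summing in $j$, the linear contributions cancel against the linear coefficient of $F^{(\nu)}$. The quadratic coefficient converges: writing $\psi'(\nu_j+N) = (\nu_j+N)^{-1} + O((\nu_j+N)^{-2})$ gives $\sum_j \psi'(\nu_j+N)\to \gamma$ directly from \eqref{ratiocond}, while the cubic error is again controlled by $\sum_j(\nu_j+N)^{-2}=o(1)$. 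The key asymptotic therefore reads
\begin{equation*}
  F^{(\nu)}(t;\xi) = \xi t - \tfrac{\gamma}{2} t^2 + o(1),
\end{equation*}
uniformly on any compact neighborhood of $t=0$, with the analogous identity for $F^{(\nu)}(s;\eta)$. Localizing the $s$- and $t$-contours to a ball of radius $O(N^{1/4})$ around the origin, the argument of \eqref{eq:limit_kernel_crit} then produces the limit $K_{\crit}(\xi,\eta;\gamma)$ in \eqref{softlimitK}.

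The hard part will be the global estimates justifying the localization, replacing \eqref{eq:est_F_vert} and \eqref{eq:est_exp_F_crit}. On the vertical $s$-line $\{1+iy\}$ one must control
\begin{equation*}
  \frac{d}{dy}\Re F^{(\nu)}(1+iy;\eta) = \sum_{j=0}^M \Im\psi\bigl(1+iy+\nu_j+N\bigr),
\end{equation*}
and show this dominates the $\Gamma(s)^{-1}$ factor as $|y|\to\infty$. Via the series representation of $\Im\psi$ as in \eqref{eq:estimate_F'}, each summand is essentially $\arctan(y/(\nu_j+N))$ up to relative corrections of size $(\nu_j+N)^{-1}$, so the sum is $\asymp \gamma y$ for $|y|=O(\min_j(\nu_j+N))$ and saturates to $\sim \tfrac{\pi}{2}(M+1)$ beyond, which is enough. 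The delicate point is that these bounds must be uniform in the sequence $\{\nu_j\}$, since individual scales $\nu_j+N$ can vary widely even though their harmonic sum is controlled by \eqref{ratiocond}. The analogous estimate on the horizontal and far-left segments of a suitable contour $\Sigma$ (encircling the poles of $\Gamma(t)$) reduces to checking monotonicity of $\Re F^{(\nu)}$ on those rays, which follows by writing the derivative as a manifestly signed sum of elementary integrals, parallel to the derivation of \eqref{eq:est_exp_F_crit}. Once these global bounds are in place, the splitting into local and global pieces and the matching with \eqref{eq:limit_kernel_crit} proceeds exactly as in the proof of Theorem \ref{thm:crit}.
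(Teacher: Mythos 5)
Your proposal follows essentially the same route as the paper's proof. You use the same phase function (the paper calls it $f_N(s)$), and your observation that the discrepancy between $g^{(\nu)}(\xi)-\xi$ and $\sum_j\psi(\nu_j+N)$ is $\bigO(\sum_j(\nu_j+N)^{-2})=\bigO(N^{-1})$, that the quadratic coefficient $\sum_j\psi'(\nu_j+N)\to\gamma$ by \eqref{ratiocond}, and that the cubic remainder is $\bigO(|t|^3\sum_j(\nu_j+N)^{-2})=\bigO(|t|^3 N^{-1})$ matches the paper's local computation (cf.\ \eqref{f-taylor}, where the error is written as $\frac{1}{\nu_j+N}\bigO(N^{-1/4})$ with $|t|\leq N^{1/4}$). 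For the global estimates, the paper's detailed argument proves the clean inequality $\Im\psi(1+b+iy)\geq\frac{1}{5b}\min\{b,|y|\}$ (\eqref{psiineq}), from which $\frac{d}{dy}\Re f_N(1+iy)\leq -\frac{\gamma}{5}(1+o(1))\min\{|y|,N\}$ is immediate and manifestly uniform in the sequence $\{\nu_j\}$ once $\sum_j(\nu_j+N)^{-1}\to\gamma$; your $\arctan$ phrasing is morally the same but you correctly flag the uniformity issue as the delicate step, and replacing the relative-error approximation by an outright lower bound like the paper's is the cleanest way to discharge it. One small imprecision: the derivative on the vertical line does not simply ``saturate to $\sim\frac{\pi}{2}(M+1)$'' once $|y|$ exceeds $\min_j(\nu_j+N)$ — what you actually need (and what \eqref{psiineq} gives) is that for all $|y|\geq N^{1/4}$ the derivative is $\geq\frac{\gamma}{10}N^{1/4}(1+o(1))$, which already dominates the $\frac{\pi}{2}|y|$ growth of $|\Gamma(1+iy)^{-1}|$ without appeal to the eventual $(M+1)\frac{\pi}{2}$ plateau.
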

%The proof is omitted since it is essentially the same as the proof of Theorem \ref{thm:crit}, part \ref{enu:thm:crit_1}.
\begin{proof}[Sketch of proof]
  Since the proof is a straightforward generalization to that of part \ref{enu:thm:crit_1} of Theorem \ref{thm:crit}, we give only the ``local'' computation but omit the ``global'' estimates. Without loss of generality, we assume that $
   \sum_{j=0}^M 1/(\nu_j+N)%\frac{1}{\nu_j+N}
    =\gamma$.

  Like \eqref{eq:alt_K_N_crit}, we write
  \begin{multline} \label{eq:general_crit}
    \K^{(\nu)}_N\Big( \sum^M_{j = 0} \Big( \log(\nu_j + N) + \frac{1}{\nu_j + N} w \Big), \sum^M_{j = 0} \Big( \log(\nu_j + N) + \frac{1}{\nu_j + N} w' \Big) \Big) = \\
    \int^{1 + i\infty}_{1 - i\infty} \frac{ds}{2\pi i} \oint_{\Sigma_-(1/2)} \frac{dt}{2\pi i} \frac{\exp \big[ (M + 1)F^{(\nu)}(t; w) \big]}{\exp \big[ (M + 1)F^{(\nu)}(s; w') \big]} \frac{\Gamma(t)}{\Gamma(s)} \frac{1}{s - t},
  \end{multline}
  where $\Sigma_-(1/2)$ is the same as in \eqref{eq:alt_K_N_crit}, and $F^{(\nu)}$ is a generalization of $F$ in \eqref{eq:defn_F}
  \begin{equation}
    F^{(\nu)}(t; w) =  \frac{1}{M + 1} \sum^M_{j = 0} \left( \Big(\log(\nu_j + N) + \frac{w}{\nu_j + N}\Big) t - \log \Gamma(t + \nu_j + N) + \log \Gamma(\nu_j + N) \right).
  \end{equation}
  For $\lvert t \rvert < N^{1/4}$, we see from  the Taylor expansion of the gamma function (cf. \eqref{stirling} and \eqref{digammaa}) that
  \begin{align}    \begin{split}
   &-\log \frac{\Gamma(t+\nu_j+N)}{\Gamma(\nu_j+N)} +\Big(\log(\nu_j+N)+\frac{w}{2(\nu_j+N)}\Big)t\\
   &  =  \Big( \log(\nu_j + N) - \psi(\nu_j + N) + \frac{w}{\nu_j + N} \Big)t - \frac{1}{2} \psi'(\nu_j + N) t^2
+ \bigO\Big(\frac{t^3}{(\nu_j+N)^2}\Big).
     \end{split}
  \end{align}
  Noting  the assumption \eqref{ratiocond},    analogous to \eqref{eq:est_MF_central_crit},
  we have
  \begin{multline}
    (M + 1)F^{(\nu)}(t; w) = \\
    \sum^M_{j = 0} \left[ \left( \log(\nu_j + N) - \psi(\nu_j + N) + \frac{w}{\nu_j + N} \right)t - \frac{1}{2} \psi'(\nu_j + N) t^2 \right] + \bigO(N^{-1/4}),
  \end{multline}
  and then analogous to \eqref{eq:est_coe_crit}, by \eqref{ratiocond},
  \begin{equation}
    \lim_{N \to \infty} \sum^M_{j = 0}\Big( \log(\nu_j + N) - \psi(\nu_j + N) + \frac{w}{\nu_j + N}\Big) = \gamma(w + \frac{1}{2}), \quad \lim_{N \to \infty} \sum^M_{j = 0} \psi'(\nu_j + N) = \gamma,
  \end{equation}
  where  the error  term $\bigO(N^{-1/4})$ is uniform for all   parameters $\nu_j$.

  Hence if we restrict the double contour integral in \eqref{eq:general_crit} in a region $s, t = \bigO(N^{1/4})$, we have
  \begin{multline} \label{eq:limit_kernel_crit_again}
    \int^{1 + iN^{1/4}}_{1 - iN^{1/4}} \frac{ds}{2\pi i} \oint_{\Sigma_-(1/2) \cap B(0, N^{1/4})} \frac{dt}{2\pi i} \frac{\exp \big[ (M + 1)F^{(\nu)}(t; w) \big]}{\exp \big[ (M + 1)F^{(\nu)}(s; w') \big]} \frac{\Gamma(t)}{\Gamma(s)} \frac{1}{s - t} = \\
     \left(1 + \bigO(N^{-1/4})\right) \int^{1 + iN^{1/4}}_{1 - iN^{1/4}} \frac{ds}{2\pi i} \oint_{\Sigma'_-(1/2) \cap B(0, N^{1/4})} \frac{dt}{2\pi i} \frac{e^{\frac{\gamma}{2} s^2 - \gamma(w' + 1/2)s}}{e^{\frac{\gamma}{2} t^2 - \gamma(w + 1/2)t}} \frac{\Gamma(t)}{\Gamma(s)}\frac{1}{s - t},
  \end{multline}
  where the right-hand side is formally the same as \eqref{eq:limit_kernel_crit}. Suppose that the double contour integral in \eqref{eq:general_crit} concentrates in the  region $s, t = \bigO(N^{1/4})$, we are virtually done with the proof, noting that after $w, w'$ are changed into $\gamma^{-1}\xi - 1/2$ and $\gamma^{-1}\eta - 1/2$ respectively and $N^{1/4}$ is extended to $+\infty$, the right-hand side of \eqref{eq:limit_kernel_crit_again} (or rather \eqref{eq:limit_kernel_crit}) is $\left(1 + \bigO(N^{-1/4})\right)K_{\crit}(\xi, \eta; \gamma)$.

  The result that the double contour integral in \eqref{eq:general_crit} concentrates in the region $s, t = \bigO(N^{1/4})$ can be proved by estimates of $F^{(\nu)}(t; w)$ like those  of $F(t; w)$ that we obtained in \eqref{eq:est_F_vert} and \eqref{eq:est_exp_F_crit}. We omit the details.
\end{proof}

\subsection{Open questions} \label{sec:open_ques}

As discussed in the Introduction, the product of $M$ random matrices of size $N\times N$ relates classical law of large numbers and central limit theorems, and Lyapunov exponents when $M\to \infty$ and $N$ is fixed, to RMT statistics when $N\to \infty$ and $M$ is fixed.  As both $M$ and $N$ go to infinity such that $M/N \to \gamma \in (0,\infty)$, there is a phase transition phenomenon as observed in Theorem \ref{thm:crit} and \cite{Akemann-Burda-Kieburg18}. These draw us to conclude this last section with a few questions which are worth considering.

\begin{que} \label{que:1}
  Prove that $F_{\crit}(x; \gamma)$ defined by \eqref{distri} is a distribution function and find an explicit Painlev\'{e}-type expression for it, like that for the Tracy-Widom distribution; cf. \cite{Tracy-Widom94}.
\end{que}

\begin{que}
  Consider the product of real Gaussian random matrices and prove a phase transition from GOE statistics to Gaussian. Furthermore, find an explicit interpolating process associated with the largest Lyapunov exponent.
\end{que}

\begin{que}
  Prove the phase transition phenomenon for the product of truncated unitary/orthogonal matrices; see \cite{Forrester15} and \cite{Kieburg-Kuijlaars-Stivigny15}.
\end{que}

\begin{que}
  Verify Theorems \ref{cor:normality}--\ref{thm1.3} for singular values of products of non-Hermitian random matrices with i.i.d.~entries under certain moment assumptions. This is one of the most challenging and difficult problems related  to  infinite products of large random matrices; see \cite{Erdos-Peche-Ramirez-Schlein-Yau10,Tao-Vu11a} or \cite{Erdos-Yau17}  for a significant breakthrough on Wigner matrices.
\end{que}

\textbf{Added note:} We note that after the paper was posted on arXiv, Ahn \cite{Ahn19}, Gorin and Sun \cite{Gorin-Sun18}, Hanin and Nica \cite{Hanin-Nica19} made substantial progress on the topic of infinite products of large random matrices.

\vspace{10pt}
\paragraph{{\bf Acknowledgements}}
%We  would like to thank the anonymous referees for  constructive suggestions and comments, particularly for the convergence question of the largest eigenvalue.
We would like to  thank G. Akemann, Z. Burda and M. Kieburg  for useful discussions and for  sharing their results, particularly the same critical  scaling  limit at the soft edge (cf.~\cite{Akemann-Burda-Kieburg18}), at the workshop on Sums and Products of Random Matrices (27-31 August 2018, Bielefeld). We are especially grateful to M.~Kieburg for his valuable comments and suggestions. We acknowledge support by the National Natural Science Foundation of China (\#11771417, \#12090012), the Youth Innovation Promotion Association CAS \#2017491 (DZL), and by Singapore AcRF Tier 1 grant R-146-000-262-114, the National Natural Science Foundation of China \#11871425 (DW), and by the National Natural Science Foundation of China \#11901161 (YW).

% \bibliographystyle{abbrv}
% \bibliography{../bibliography/bibliography}

\begin{thebibliography}{10}

\bibitem{Ahn19}
A.~Ahn.
\newblock Fluctuations of $\beta$-{J}acobi product processes.
\newblock {\em Probab. Theory Relat. Fields} 183:57–123, 2022.

\bibitem{Akemann-Baik-Di_Fransesco11}
G.~Akemann, J.~Baik, and P.~Di~Francesco, editors.
\newblock {\em The {O}xford handbook of random matrix theory}.
\newblock Oxford University Press, Oxford, 2011.

\bibitem{Akemann-Burda-Kieburg14}
G.~Akemann, Z.~Burda, and M.~Kieburg.
\newblock Universal distribution of {L}yapunov exponents for products of
  {G}inibre matrices.
\newblock {\em J. Phys. A}, 47(39):395202, 35, 2014.

\bibitem{Akemann-Burda-Kieburg18}
G.~Akemann, Z.~Burda, and M.~Kieburg.
\newblock From integrable to chaotic systems: Universal local statistics of
  Lyapunov exponents. Europhysics Letter, 126 (4), 40001:  p1-p7,  2019.
%\newblock arXiv:1809.05905, 2018.

\bibitem{Akemann-Burda-Kieburg18b}
G.~Akemann, Z.~Burda, and M.~Kieburg.
\newblock Universality of local spectral statistics of products of random matrices.
\newblock {\em Phys. Rev. E}, 102(5):052134, 27, 2020.

\bibitem{Akemann-Ipsen15}
G.~Akemann and J.~R. Ipsen.
\newblock Recent exact and asymptotic results for products of independent
  random matrices.
\newblock {\em Acta Phys. Polon. B}, 46(9):1747--1784, 2015.

\bibitem{Akemann-Ipsen-Kieburg13}
G.~Akemann, J.~R. Ipsen, and M.~Kieburg.
\newblock Products of rectangular random matrices: {S}ingular values and
  progressive scattering.
\newblock {\em Phys. Rev. E}, 88(5):052118, 13, 2013.

\bibitem{Akemann-Kieburg-Wei13}
G.~Akemann, M.~Kieburg, and L.~Wei.
\newblock Singular value correlation functions for products of {W}ishart random
  matrices.
\newblock {\em J. Phys. A}, 46(27):275205, 22, 2013.

\bibitem{Alexeev-Gotze-Tikhomirov10}
N.~Alexeev, F.~G{\"o}tze, and A.~Tikhomirov.
\newblock Asymptotic distribution of singular values of powers of random
  matrices.
\newblock {\em Lith. Math. J.}, 50(2):121--132, 2010.

\bibitem{Anderson-Guionnet-Zeitouni10}
G.~W. Anderson, A.~Guionnet, and O.~Zeitouni.
\newblock {\em An introduction to random matrices}, volume 118 of {\em
  Cambridge Studies in Advanced Mathematics}.
\newblock Cambridge University Press, Cambridge, 2010.

\bibitem{Arnold-Wihstutz86}
L.~Arnold and V.~Wihstutz, editors.
\newblock {\em Lyapunov exponents}, volume 1186 of {\em Lecture Notes in
  Mathematics}. Springer-Verlag, Berlin, 1986.

\bibitem{Bai-Silverstein10}
Z.~Bai and J.~W. Silverstein.
\newblock {\em Spectral analysis of large dimensional random matrices}.
\newblock Springer Series in Statistics. Springer, New York, second edition,
  2010.

\bibitem{Baik-Ben_Arous-Peche05}
J.~Baik, G.~Ben~Arous, and S.~P{\'e}ch{\'e}.
\newblock Phase transition of the largest eigenvalue for nonnull complex sample
  covariance matrices.
\newblock {\em Ann. Probab.}, 33(5):1643--1697, 2005.

\bibitem{Bellman54}
R.~Bellman.
\newblock Limit theorems for non-commutative operations. {I}.
\newblock {\em Duke Math. J.}, 21:491--500, 1954.

\bibitem{Bougerol-Lacroix85}
P.~Bougerol and J.~Lacroix.
\newblock {\em Products of random matrices with applications to {S}chr\"odinger
  operators}, volume~8 of {\em Progress in Probability and Statistics}.
\newblock Birkh\"auser Boston, Inc., Boston, MA, 1985.

\bibitem{Cohen-Kesten-Newman86}
J.~E. Cohen, H.~Kesten, and C.~M. Newman, editors.
\newblock {\em Random matrices and their applications}, volume~50 of {\em
  Contemporary Mathematics}. American Mathematical Society, Providence, RI,
  1986.

\bibitem{Comtet-Luck-Texier-Tourigny13}
A.~Comtet, J.-M. Luck, C.~Texier, and Y.~Tourigny.
\newblock The {L}yapunov exponent of products of random {$2\times2$} matrices
  close to the identity.
\newblock {\em J. Stat. Phys.}, 150(1):13--65, 2013.

\bibitem{Crisanti-Paladin-Vulpiani93}
A.~Crisanti, G.~Paladin, and A.~Vulpiani.
\newblock {\em Products of random matrices in statistical physics}, volume 104
  of {\em Springer Series in Solid-State Sciences}.
\newblock Springer-Verlag, Berlin, 1993.
\newblock With a foreword by Giorgio Parisi.

\bibitem{Deift17}
P.~Deift.
\newblock Some open problems in random matrix theory and the theory of
  integrable systems. {II}.
\newblock {\em SIGMA Symmetry Integrability Geom. Methods Appl.}, 13:Paper No.
  016, 23, 2017.

\bibitem{Deift-Gioev09}
P.~Deift and D.~Gioev.
\newblock {\em Random matrix theory: invariant ensembles and universality},
  volume~18 of {\em Courant Lecture Notes in Mathematics}.
\newblock Courant Institute of Mathematical Sciences, New York; American
  Mathematical Society, Providence, RI, 2009.

\bibitem{Deift99}
P.~A. Deift.
\newblock {\em Orthogonal polynomials and random matrices: a
  {R}iemann-{H}ilbert approach}, volume~3 of {\em Courant Lecture Notes in
  Mathematics}.
\newblock New York University Courant Institute of Mathematical Sciences, New
  York, 1999.

\bibitem{Erdos-Yau17}
L.~Erd\H{o}s and H.-T. Yau.
\newblock {\em A dynamical approach to random matrix theory}, volume~28 of {\em
  Courant Lecture Notes in Mathematics}.
\newblock Courant Institute of Mathematical Sciences, New York; American
  Mathematical Society, Providence, RI, 2017.

\bibitem{Erdos-Peche-Ramirez-Schlein-Yau10}
L.~Erd{\H{o}}s, S.~P{\'e}ch{\'e}, J.~A. Ram{\'{\i}}rez, B.~Schlein, and H.-T.
  Yau.
\newblock Bulk universality for {W}igner matrices.
\newblock {\em Comm. Pure Appl. Math.}, 63(7):895--925, 2010.

\bibitem{Forrester10}
P.~J. Forrester.
\newblock {\em Log-gases and random matrices}, volume~34 of {\em London
  Mathematical Society Monographs Series}.
\newblock Princeton University Press, Princeton, NJ, 2010.

\bibitem{Forrester13}
P.~J. Forrester.
\newblock Lyapunov exponents for products of complex {G}aussian random
  matrices.
\newblock {\em J. Stat. Phys.}, 151(5):796--808, 2013.

\bibitem{Forrester15}
P.~J. Forrester.
\newblock Asymptotics of finite system {L}yapunov exponents for some random
  matrix ensembles.
\newblock {\em J. Phys. A}, 48(21):215205, 17, 2015.

\bibitem{Forrester-Liu15}
P.~J. Forrester and D.-Z. Liu.
\newblock Singular values for products of complex {G}inibre matrices with a
  source: hard edge limit and phase transition.
\newblock {\em Comm. Math. Phys.}, 344(1):333--368, 2016.

\bibitem{Furstenberg-Kesten60}
H.~Furstenberg and H.~Kesten.
\newblock Products of random matrices.
\newblock {\em Ann. Math. Statist.}, 31:457--469, 1960.

\bibitem{Gorin-Sun18}
V.~Gorin and Y.~Sun.
\newblock {G}aussian fluctuations for products of random matrices.
\newblock {\em Amer. J. Math.}, 144(2):287--393, 2022.

\bibitem{Hanin-Nica19}
B.~Hanin and M.~Nica.
\newblock Products of many large random matrices and gradients in deep neural
  networks.
\newblock {\em Comm. Math. Phys.}, 376(1):287--322, 2020.

\bibitem{Ipsen15}
J.~R. Ipsen.
\newblock Lyapunov exponents for products of rectangular real, complex and
  quaternionic {G}inibre matrices.
\newblock {\em J. Phys. A}, 48(15):155204, 18, 2015.

\bibitem{Ipsen-Kieburg14}
J.~R. Ipsen and M.~Kieburg.
\newblock Weak commutation relations and eigenvalue statistics for products of
  rectangular random matrices.
\newblock {\em Phys. Rev. E}, 89(3):032106, 20, 2014.

\bibitem{Ipsen-Schomerus16}
J.~R. Ipsen and H.~Schomerus.
\newblock Isotropic {B}rownian motions over complex fields as a solvable model
  for {M}ay-{W}igner stability analysis.
\newblock {\em J. Phys. A}, 49(38):385201, 14, 2016.

\bibitem{Johansson07}
K.~Johansson.
\newblock From {G}umbel to {T}racy-{W}idom.
\newblock {\em Probab. Theory Related Fields}, 138(1-2):75--112, 2007.

\bibitem{Kargin14}
V.~Kargin.
\newblock On the largest {L}yapunov exponent for products of {G}aussian
  matrices.
\newblock {\em J. Stat. Phys.}, 157(1):70--83, 2014.

\bibitem{Kieburg-Kuijlaars-Stivigny15}
M.~Kieburg, A.~B.~J. Kuijlaars, and D.~Stivigny.
\newblock Singular value statistics of matrix products with truncated unitary
  matrices.
\newblock {\em Int. Math. Res. Not. IMRN}, (11):3392--3424, 2016.

\bibitem{Kingman73}
J.~F.~C. Kingman.
\newblock Subadditive ergodic theory.
\newblock {\em Ann. Probability}, 1:883--909, 1973.
\newblock With discussion by D. L. Burkholder, Daryl Daley, H. Kesten, P. Ney,
  Frank Spitzer and J. M. Hammersley, and a reply by the author.

\bibitem{Kuijlaars-Stivigny14}
A.~B.~J. Kuijlaars and D.~Stivigny.
\newblock Singular values of products of random matrices and polynomial
  ensembles.
\newblock {\em Random Matrices Theory Appl.}, 3(3):1450011, 22, 2014.

\bibitem{Kuijlaars-Zhang14}
A.~B.~J. Kuijlaars and L.~Zhang.
\newblock Singular values of products of {G}inibre random matrices, multiple
  orthogonal polynomials and hard edge scaling limits.
\newblock {\em Comm. Math. Phys.}, 332(2):759--781, 2014.

\bibitem{Liechty-Wang18}
K.~Liechty and D.~Wang.
\newblock Asymptotics of free fermions in a quadratic well at finite
  temperature and the {M}oshe-{N}euberger-{S}hapiro random matrix model.
\newblock {\em Ann. Inst. Henri Poincar\'{e} Probab. Stat.}, 56(2):1072--1098,
  2020.

\bibitem{Liu-Wang-Zhang14}
D.-Z. Liu, D.~Wang, and L.~Zhang.
\newblock Bulk and soft-edge universality for singular values of products of
  {G}inibre random matrices.
\newblock {\em Ann. Inst. Henri Poincar\'e Probab. Stat.}, 52(4):1734--1762,
  2016.

\bibitem{Lyapunov92}
A.~M. Lyapunov.
\newblock {\em The general problem of the stability of motion}.
\newblock Taylor \& Francis, Ltd., London, 1992.
\newblock Translated from Edouard Davaux's French translation (1907) of the
  1892 Russian original and edited by A. T. Fuller, With an introduction and
  preface by Fuller, a biography of Lyapunov by V. I. Smirnov, and a
  bibliography of Lyapunov's works compiled by J. F. Barrett, Lyapunov
  centenary issue, Reprint of Internat. J. Control {{\bf{5}}5} (1992), no. 3 [
  MR1154209 (93e:01035)], With a foreword by Ian Stewart.

\bibitem{Mannion93}
D.~Mannion.
\newblock Products of {$2\times 2$} random matrices.
\newblock {\em Ann. Appl. Probab.}, 3(4):1189--1218, 1993.

\bibitem{Marklof-Tourigny-Wolowski08}
J.~Marklof, Y.~Tourigny, and L.~Wo\l~owski.
\newblock Explicit invariant measures for products of random matrices.
\newblock {\em Trans. Amer. Math. Soc.}, 360(7):3391--3427, 2008.

\bibitem{Mehta04}
M.~L. Mehta.
\newblock {\em Random matrices}, volume 142 of {\em Pure and Applied
  Mathematics (Amsterdam)}.
\newblock Elsevier/Academic Press, Amsterdam, third edition, 2004.

\bibitem{Mingo-Speicher17}
J.~A. Mingo and R.~Speicher.
\newblock {\em Free probability and random matrices}, volume~35 of {\em Fields
  Institute Monographs}.
\newblock Springer, New York; Fields Institute for Research in Mathematical
  Sciences, Toronto, ON, 2017.

\bibitem{Newman86}
C.~M. Newman.
\newblock The distribution of {L}yapunov exponents: exact results for random
  matrices.
\newblock {\em Comm. Math. Phys.}, 103(1):121--126, 1986.

\bibitem{Boisvert-Clark-Lozier-Olver10}
F.~W.~J. Olver, D.~W. Lozier, R.~F. Boisvert, and C.~W. Clark, editors.
\newblock {\em N{IST} handbook of mathematical functions}.
\newblock U.S. Department of Commerce, National Institute of Standards and
  Technology, Washington, DC; Cambridge University Press, Cambridge, 2010.
\newblock With 1 CD-ROM (Windows, Macintosh and UNIX).

\bibitem{Oseledec68}
V.~I. Oseledets.
\newblock A multiplicative ergodic theorem. {C}haracteristic {L}japunov,
  exponents of dynamical systems.
\newblock {\em Tr. Mosk. Mat. Obs.}, 19:179--210, 1968; \newblock {\em Trans. Moscow Math. Soc.}, 19: 197-231, 1968.
%V. I. Oseledets, “A multiplicative ergodic theorem. Characteristic Ljapunov, exponents of dynamical systems”, Tr. Mosk. Mat. Obs., 19, MSU, M., 1968, 179–210;


\bibitem{Pastur-Shcherbina11}
L.~Pastur and M.~Shcherbina.
\newblock {\em Eigenvalue distribution of large random matrices}, volume 171 of
  {\em Mathematical Surveys and Monographs}.
\newblock American Mathematical Society, Providence, RI, 2011.

\bibitem{Pollicott10}
M.~Pollicott.
\newblock Maximal {L}yapunov exponents for random matrix products.
\newblock {\em Invent. Math.}, 181(1):209--226, 2010.

\bibitem{Raghunathan79}
M.~S. Raghunathan.
\newblock A proof of {O}seledec's multiplicative ergodic theorem.
\newblock {\em Israel J. Math.}, 32(4):356--362, 1979.

\bibitem{Reddy16}
N.~K. Reddy.
\newblock Equality of {L}yapunov and stability exponents for products of
  isotropic random matrices.
\newblock {\em Int. Math. Res. Not. IMRN}, (2):606--624, 2019.

\bibitem{Reed-Simon80}
M.~Reed and B.~Simon.
\newblock {\em Methods of modern mathematical physics. {I}. {F}unctional analysis}.
\newblock Academic Press, Inc. [Harcourt Brace Jovanovich, Publishers], New
  York, second edition, 1980.

\bibitem{Tao12}
T.~Tao.
\newblock {\em Topics in random matrix theory}, volume 132 of {\em Graduate
  Studies in Mathematics}.
\newblock American Mathematical Society, Providence, RI, 2012.

\bibitem{Tao-Vu11a}
T.~Tao and V.~Vu.
\newblock Random matrices: universality of local eigenvalue statistics.
\newblock {\em Acta Math.}, 206(1):127--204, 2011.

\bibitem{Tracy-Widom94}
C.~A. Tracy and H.~Widom.
\newblock Level-spacing distributions and the {A}iry kernel.
\newblock {\em Comm. Math. Phys.}, 159(1):151--174, 1994.

\bibitem{Tulino-Verdu04}
A.~M. Tulino and S.~Verd{\'u}.
\newblock Random matrix theory and wireless communications.
\newblock {\em Found. Trends Commun. Inform. Theory}, 1(1):1--182, 2004.

\bibitem{Viana14}
M.~Viana.
\newblock {\em Lectures on {L}yapunov exponents}, volume 145 of {\em Cambridge
  Studies in Advanced Mathematics}.
\newblock Cambridge University Press, Cambridge, 2014.

\bibitem{Wilkinson17}
A.~Wilkinson.
\newblock What are Lyapunov exponents, and why are they interesting?
\newblock {\em Bull. Amer. Math. Soc. (N.S.)}, 54(1):79--105, 2017.

\end{thebibliography}

\end{document}